\newtheorem{thm2}{Theorem}
\newtheorem{cor2}{Corollary}
\newtheorem{conj2}{Conjecture}
\newtheorem{ques2}{Question}
\newtheorem{thm}{Theorem}[section]
\newtheorem{cor}[thm]{Corollary}
\newtheorem{lem}[thm]{Lemma}
\newtheorem{prop}[thm]{Proposition}
\newtheorem{conj}[thm]{Conjecture}
\theoremstyle{definition}
\newtheorem{defn}[thm]{Definition}
\newtheorem{ex}[thm]{Example}
\newtheorem{rmk}[thm]{Remark}
\newtheorem{ques}[thm]{Question}
\newtheorem{notation}[thm]{Notation}
\DeclareMathOperator{\Hom}{Hom}
\DeclareMathOperator{\prim}{prim}
\DeclareMathOperator{\Span}{Span}
\DeclareMathOperator{\tr}{tr}
\newcommand{\C}{\ensuremath\mathds{C}}
\newcommand{\R}{\ensuremath\mathrm{R}}
\newcommand{\N}{\ensuremath\mathds{N}}
\newcommand{\Q}{\ensuremath\mathds{Q}}
\newcommand{\FF}{\ensuremath\mathcal{F}}
\newcommand{\h}{\ensuremath\mathfrak{h}}
\newcommand{\PP}{\ensuremath\mathds{P}}
\newcommand{\HH}{\ensuremath\mathrm{H}}
\newcommand{\CH}{\ensuremath\mathrm{CH}}
\newcommand{\id}{\ensuremath\mathrm{id}}
\newcommand{\Gr}{\ensuremath\mathrm{Gr}}
\newcommand{\ZZ}{\mathbb{Z}}
\newcommand{\QQ}{\mathbb{Q}}
\newcommand{\Ss}{\mathcal S}
\newcommand{\XX}{\mathcal X}
\newcommand{\YY}{\mathcal Y}
\newcommand{\MM}{\mathcal M}
\newcommand{\VV}{\mathcal V}
\newcommand{\CC}{\mathcal C}
\newcommand{\TT}{\mathcal T}
\newcommand{\PPP}{\mathcal P}
\newcommand{\wt}{\widetilde}
\newcommand{\ima}{\hbox{Im}}
\newcommand{\rom}{\romannumeral}
\renewcommand{\1}{\mathop{\mathds{1}}\nolimits}
\newif\ifHideFoot
\newcommand{\Lie}[1]{}
\newcommand{\Charles}[1]{}
\newcommand{\Robert}[1]{}
\newcommand{\marg}[1]{\normalsize{{
			\color{red}\footnote{{\color{blue}#1}}}{\marginpar[\vskip
			-.25cm{\color{red}\hfill$\Rightarrow$\tiny\thefootnote}]{\vskip
				-.2cm{\color{red}$\Leftarrow$\tiny\thefootnote}}}}}
\newcommand{\Lie}[1]{\marg{(Lie) #1}}
\newcommand{\Charles}[1]{\marg{(Charles) #1}}
\newcommand{\Robert}[1]{\marg{(Robert) #1}}
\begin{document}

	\title[MCK decompositions and varieties of K3 type]{Multiplicative
		Chow--K\"unneth decompositions \linebreak and varieties of cohomological K3
		type}

	\author{Lie Fu}
	\address{ Institut Camille Jordan, Universit\'e Claude Bernard Lyon 1, France}
	\address{IMAPP, Radboud University, Nijmegen, Netherlands}
	\email{fu@math.univ-lyon1.fr}
	
	\author{Robert Laterveer} 
	\address{CNRS - IRMA, Universit\'e de Strasbourg,
		France}
	\email{laterv@math.unistra.fr}  
	
	\author{Charles Vial}
	\address{Fakult\"at f\"ur Mathematik, Universit\"at Bielefeld, Germany} 
	\email{vial@math.uni-bielefeld.de}
	\thanks{2010 {\em Mathematics Subject Classification:} 14C15, 14C25, 14C30, 14J45, 14J29}
	
	\thanks{{\em Key words and phrases}: Algebraic cycles, Chow groups, motives, K3
		surfaces, cubic hypersurfaces, Fano varieties of lines, Franchetta conjecture,
		hyper-K\"ahler varieties, Beauville ``splitting property'' conjecture,
		multiplicative Chow--K\"unneth decomposition.}  
	
	
	\begin{abstract} 
		Given a smooth projective variety, a Chow--K\"unneth decomposition is called
		multiplicative if it is compatible with the intersection product. 
		Following works of Beauville and Voisin, Shen and Vial conjectured that
		hyper-K\"ahler varieties admit a multiplicative Chow--K\"unneth
		decomposition. In this paper, based on the mysterious link between Fano
		varieties
		with cohomology of K3 type and hyper-K\"ahler varieties, we ask whether Fano
		varieties with cohomology of K3 type also admit a multiplicative
		Chow--K\"unneth decomposition, and provide
		evidence by establishing their existence for cubic fourfolds and K\"uchle
		fourfolds of type $c7$. 
		The main input in the cubic hypersurface case is the Franchetta property for the square 
		of the Fano variety of lines\,; this was established in our earlier work in the fourfold case and is generalized here to arbitrary dimension.
		On the other end of the spectrum, we also give
		evidence
		that varieties with ample canonical class and with cohomology of K3 type might
		admit a multiplicative Chow--K\"unneth decomposition, by establishing this for
		two families of Todorov surfaces. 
	\end{abstract}
	
	\maketitle
	
	\section{Introduction}

	\subsection{Multiplicative Chow--K\"unneth decompositions}
	Let $X$ be a smooth projective variety over a field $k$. A
	\emph{multiplicative Chow--K\"unneth decomposition} -- abbreviated MCK
	decomposition -- 
	for $X$ is a
	decomposition of the Chow motive of $X$, considered as an algebra object, that
	lifts the K\"unneth decomposition of its homological motive (when it exists).
	This notion was first introduced in \cite{SV} as a way to make explicitly
	verifiable the \emph{splitting principle} for hyper-K\"ahler varieties due to
	Beauville \cite{Beau3}. Having a multiplicative Chow--K\"unneth decomposition
	is
	a restrictive condition on the variety~$X$ and determining the class of
	varieties that could admit an MCK decomposition is still elusive. A precise
	definition of MCK decomposition is given in~\S \ref{S:MCK} and examples of
	varieties admitting or not admitting MCK decompositions are reviewed.
	Nonetheless, Beauville's splitting principle suggests that the situation for
	hyper-K\"ahler varieties is special.
	
	\begin{conj2}[Shen--Vial \cite{SV}]\label{C:MCK-HK}
		Any hyper-K\"ahler variety admits a multiplicative Chow--K\"unneth
		decomposition.
	\end{conj2}
	
	\noindent In fact, it could be moreover expected that, if it exists, such an
	MCK
	decomposition is unique for hyper-K\"ahler varieties. So far,
	Conjecture~\ref{C:MCK-HK} has been established in the hyper-K\"ahler world for
	K3
	surfaces \cite{BV} (reinterpreted by \cite[Prop.~8.14]{SV}), Hilbert schemes of
	points on K3 surfaces~\cite{V6} (see also \cite{NOY}), generalized Kummer varieties \cite{FTV} and
	all
	hyper-K\"ahler varieties birationally equivalent to these examples by
	\cite{Rie14}.
	
	\pagebreak
	
	\subsection{Fano varieties of cohomological K3 type} A smooth projective
	complex variety is said to be \emph{of cohomological K3 type}, or more
	succinctly \emph{of K3 type}, if it is even-dimensional, say $\dim X=2m$,  and
	the Hodge numbers
	$h^{p,q}(X)$ are $0$ for all $p\not=q$ except for
	$h^{m-1,m+1}(X)=h^{m+1,m-1}(X)=1$.
	Since the foundational work of Beauville--Donagi~\cite{BD}, it has become clear
	that
	hyper-K\"ahler varieties are intimately related to Fano varieties of
	cohomological K3 type. The folklore expectation seems to be that to any Fano
	variety of  K3 type can be associated geometrically (via a moduli construction)
	a hyper-K\"ahler
	variety, and that the transcendental part of the middle cohomology of the Fano
	variety corresponds to the transcendental part of the second cohomology of the
	hyper-K\"ahler variety (via an Abel--Jacobi isomorphism). Apart from the
	example
	of Beauville--Donagi \cite{BD}, this ``folklore expectation'' is based on the
	examples 
	\cite{OG06}, \cite{DV},  \cite{IM}, \cite{LLSS}, \cite{LSV},
	\cite{IM2}, \cite{IKKR}, \cite{FM}. It is further motivated by
	the construction of a closed holomorphic 2-form on the non-singular locus of
	moduli spaces associated to any Fano variety
	of cohomological K3 type~\cite{KM}.
	
	In this paper, we give further evidence that the conjectural existence of an
	MCK
	decomposition for hyper-K\"ahler varieties could transfer to Fano varieties of
	K3
	type, and thereby strengthen the apparent connection between those two types of
	varieties. This question was already tackled for certain Fano varieties of K3
	type in  \cite{d3}, \cite{Ver}, \cite{S2},
	\cite{B1B2}, where the so-called \emph{Franchetta property} (\emph{cf.}~Definition~\ref{def:Franchetta})
plays a crucial role. In our previous work~\cite{FLV}, we established the Franchetta property for the Fano variety of lines on a cubic fourfold and its square. Here we provide a generalization to smooth cubic hypersurfaces of \emph{any} dimension\,:

	\begin{thm2}[see Theorem~\ref{flv}] \label{thm2:flv}
		Let $B$
		be the open subset of $\PP\HH^0(\PP^{n+1},\mathcal{O}(3))$
		parameterizing smooth cubic hypersurfaces, and let $\FF\to B$ be the universal
		family
		of their Fano varieties of lines. Then the families 
		$\FF\to B$ and $\FF\times_B \FF\to B$
		have the \emph{Franchetta property}\,:
		for all fibers $F$ of $\FF \to B$, the images of the restriction maps 
		$$\operatorname{CH}^*(\FF) \to \operatorname{CH}^*(F) 
		\quad \text{and} \quad
		\operatorname{CH}^*(\FF\times_B\FF) \to \operatorname{CH}^*(F\times F)
	$$ 
		inject, via the cycle class map, in the cohomology rings $\HH^*(F,\QQ)$ and $\HH^*(F\times F,\QQ)$, respectively.
	\end{thm2}

As a consequence, we deduce the following, which can also be obtained from previous results of Diaz~\cite{DiazIMRN}\,:
	
	\begin{cor2}[Diaz \cite{DiazIMRN}]\label{T:cubic-$c7$} All smooth
		cubic hypersurfaces have a
		multiplicative Chow--K\"unneth decomposition.
	\end{cor2}
	
	In particular, this applies to smooth cubic fourfolds, which are the Fano fourfolds that occur in the foundational work of Beauville--Donagi~\cite{BD}.
	Corollary~\ref{T:cubic-$c7$} is stated in a more precise form in Theorem~\ref{thm:MCKcubic} and is proved in \S \ref{SS:proofMCK}, where we also explain the connection with the work of Diaz.\medskip

	Finally, we obtain  in Theorem~\ref{main3} yet another positive answer for another type of Fano varieties
	of K3 type, namely  K\"uchle fourfolds of type $c7$. The method of proof also involves the Franchetta property.

	\subsection{Varieties with ample canonical class and of cohomological K3 type}
	On the other end of the spectrum, we also provide examples of varieties of K3
	type with ample canonical class that admit an MCK decomposition\,:

	\begin{thm2} \label{T:Todorov}
		Let $S$ be a smooth
		Todorov
		surface with fundamental invariants (0,9) or (1,10). 
		Then $S$ has
		a multiplicative Chow--K\"unneth decomposition.
	\end{thm2}
	
	The proof again involves, among other things, establishing the Franchetta property for some families. Theorem~\ref{T:Todorov} gives the first example of a regular surface of general type
	with $p_g\neq 0$ that admits an MCK decomposition.
	For details on Todorov surfaces, we refer to \S \ref{sst} and to the
	references therein.
	\medskip

	We are led to ask\,: 	
	\begin{ques2}\label{K3type} Let $X$ be a smooth projective variety whose
		canonical divisor is ample or anti-ample.
		Assume that $X$ is of cohomological K3 type. Does $X$ have a multiplicative
		Chow--K\"unneth decomposition\,? If it exists, is it unique\,?
	\end{ques2}
	
	\noindent We note that without the assumption that $X$ be of cohomological K3
	type, the question has a negative answer. In the case where the canonical
	divisor is ample, a very general curve of genus larger than $2$ already
	provides
	an example that does not admit an MCK decomposition\,; see
	Example~\ref{ex:NoMCK}. Other examples are provided by very general surfaces in
	$\PP^3$ of degree $\geq 7$\,; see Proposition~\ref{P:SurfaceNoMCK}.
	In the case where the canonical divisor is anti-ample, examples are provided by
	Beauville's examples of Fano threefolds that do not satisfy the so-called
	\emph{weak splitting property}\,; see Example~\ref{ex:NoMCK-Fano}.

	\subsection{Organization of the paper} We start in \S \ref{S:MCK} by reviewing
	the notion of MCK decomposition and examples of varieties for which such an MCK
	decomposition exists. The case of curves and regular surfaces is then
	extensively reviewed in Sections \ref{S:curves} and~\ref{S:surfaces}, respectively.
 The results of
	Sections~\ref{S:MCK},~\ref{S:curves} and~\ref{S:surfaces} are mostly expository
	and serve as motivation for the special role that hyper-K\"ahler varieties and
	varieties of K3 type with ample or anti-ample canonical class play with respect
	to MCK decompositions. Our new results are contained
	in the subsequent Sections~\ref{S:cubic},~\ref{S:$c7$} and~\ref{S:todorov}
	where
	we establish the existence of an MCK decomposition for smooth cubic
	hypersurfaces,
	K\"uchle fourfolds of type $c7$, and certain Todorov surfaces, respectively.

	\subsection{Future work}
	In concomitant work, we use Corollary~\ref{T:cubic-$c7$} on the existence of an
	MCK
	decomposition for smooth cubic fourfolds to establish in \cite{FLV2} the
	generalized Franchetta conjecture for Lehn--Lehn--Sorger--van Straten
	hyper-K\"ahler eightfolds 
	and to
	study in \cite{FV} 
	the Chow motives, as algebra objects, of smooth cubic
	fourfolds with Fourier--Mukai equivalent Kuznetsov categories. Furthermore, we
	will use Theorem~\ref{thm2:flv} 
	in  \cite{FLV2} to compute the Chow motive of the
	Fano variety of lines on a smooth cubic hypersurface in terms of the Chow motive
	of the cubic hypersurface.

	\subsection{Notation and conventions} 
	In this note, the word {\sl
		variety\/} will refer to a reduced irreducible separated scheme of finite type
	over $\C$.  	We will write $\HH^j(X)$ to indicate its rational singular
	cohomology group
	$\HH^j(X(\C),\QQ)$.
	For a scheme of finite type over a field, $\CH^i(X)$
	denotes the Chow group of codimension-$i$ cycle classes on $X$ with rational
	coefficients. 
	The category of rational Chow motives (pure motives with respect to
	rational equivalence as in \cite{andre}) is denoted by $\MM_{\rm
		rat}$, which is a pseudo-abelian rigid tensor category, whose tensor unit is
	denoted by $\1$. The contravariant functor from the category of smooth
	projective varieties to $\MM_{\rm rat}$ that sends a variety to its Chow motive
	is denoted by $\h$.

	\section{Generalities on multiplicative Chow--K\"unneth decompositions}
	\label{S:MCK}
	
	\subsection{Chow--K\"unneth decomposition}
	
	\begin{defn}[Chow--K\"unneth decomposition]
		\label{def:CK}
		Let $X$ be a smooth projective variety of dimension $d$. A
		\emph{Chow--K\"unneth} decomposition for $X$ is a direct-sum decomposition
		$$\h(X)=\h^{0}(X)\oplus\cdots\oplus \h^{2d}(X)$$
		of	its rational Chow motive in $\MM_{\rm
			rat}$,
		such that for any $0\leq i\leq 2d$, the Betti realization
		$\HH^{*}(\h^{i}(X))=\HH^{i}(X)$.
		
		In other words, a Chow--K\"unneth decomposition is a system of
		self-correspondences $\left\{\pi^{0}, \dots, \pi^{2d}\right\}$ in
		$\CH^{d}(X\times X)$ satisfying the following properties\,:
		\begin{itemize}
			\item  (Projectors) $\pi^{i}\circ \pi^{i}=\pi^{i}$ for any $i$\,;
			\item (Orthogonality) $\pi^{i}\circ \pi^{j}=0$ for any $i\neq j$\,;
			\item (Completeness) $\pi^{0}+\cdots+\pi^{2d}=\Delta_{X}$\,;
			\item (K\"unneth property) $\pi^{i}_{*}\HH^{*}(X)=\HH^{i}(X)$ for any $i$. 
		\end{itemize} 
	\end{defn}
	
	The existence of a Chow--K\"unneth decomposition for any smooth projective
	variety is part of Murre's conjectures \cite{Mur}.

	\begin{rmk}[$\pi^{0}$ and $\pi^{2d}$]
		\label{rmk:pi0pi2d}
		In a Chow--K\"unneth decomposition of a $d$-dimensional irreducible smooth
		projective
		variety $X$, the first and the last projectors are usually taken to be of the
		form $\pi^{0}=z\times 1_{X}$ and $\pi^{2d}=1_{X}\times z'$ respectively, where
		$z, z'$ are 0-cycles of degree 1 and $1_{X}$ is the fundamental class. We
		point
		out that if $X$ is Kimura finite-dimensional \cite{Kimura}, we have
		$\h^{0}(X)\simeq \1$ and $\h^{2d}(X)\simeq \1(-d)$, therefore $\pi^{0}$ and
		$\pi^{2d}$ must be of the above form. 
	\end{rmk}
	
	\begin{rmk}[Duality]\label{rmk:DualCK}
		Thanks to the motivic Poincar\'e duality $\h(X) = \h(X)^{\vee}(-d)$,
		we see that a Chow--K\"unneth decomposition
		$$\h(X)=\h^{0}(X)\oplus\cdots\oplus
		\h^{2d}(X),$$ naturally admits a \emph{dual decomposition}\,:
		$$\h(X)=\h^{2d}(X)^{\vee}(-d)\oplus\cdots\oplus \h^{0}(X)^{\vee}(-d).$$
		In terms of projectors, the dual of a system $\left\{\pi^{0}, \ldots,
		\pi^{2d}\right\}$ is
		$\left\{{^{t}}\pi^{2d}, \ldots, {}^{t}\pi^{0}\right\}$.
		A Chow--K\"unneth decomposition is called \emph{self-dual} if for any $0\leq
		i\leq 2d$, we have
		$\h^{i}(X)^{\vee}=\h^{2d-i}(X)(d)$, or equivalently,
		$\pi^{i}={}^{t}\pi^{2d-i}$.
	\end{rmk}
	
	\subsection{Murre's conjectures and the Bloch--Beilinson filtration}
	
	\begin{conj}[Murre \cite{Mur}]\label{conj:Murre}
		Let $X$ be a smooth projective variety of dimension $d$. Then 
		\begin{enumerate}[(A)]
			\item there exists a Chow--K\"unneth decomposition $\left\{\pi^{0}, \ldots,
			\pi^{2d}\right\}$. 
					\end{enumerate} 
			Any such decomposition induces a descending filtration
			$$F^{j}\CH^{i}(X):=\bigcap_{k>2i-j}\ker(\pi^{k}_* : \CH^i(X) \to
			\CH^i(X))=\sum_{k\leq 2i-j}\operatorname{im}(\pi^{k}_{*}: \CH^i(X) \to
			\CH^i(X))$$
			with the following properties\,:
			\begin{enumerate}[(A)]
			\item[(B)] $F^{0}\CH^{i}(X)=\CH^{i}(X)$ and $(B')$ $F^{i+1}\CH^i(X) =  0$.
			\item[(C)] The filtration $F^{\bullet}$ on $\CH^{i}(X)$ is independent of the
			choice
			of the Chow--K\"unneth decomposition. 
			\item[(D)] $F^{1}\CH^{i}(X)=\CH^{i}(X)_{\hom}$.
		\end{enumerate} 
	\end{conj}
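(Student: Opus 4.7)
Murre's conjecture is famously open, so this ``proof proposal'' must be understood as a program rather than a complete argument\,; the whole package (A)+(B)+(C)+(D) is known to be equivalent to the existence of a Bloch--Beilinson conjectural filtration on Chow groups. What I would actually verify, and where I expect the true difficulty, is laid out below.

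First I would dispose of the equality in the definition of $F^j\CH^i(X)$, which is automatic once a Chow--K\"unneth decomposition is in hand. Given orthogonal idempotent correspondences summing to $\Delta_X$, one obtains a direct-sum decomposition $\CH^i(X)=\bigoplus_k \pi^k_*\CH^i(X)$\,; the intersection $\bigcap_{k>2i-j}\ker\pi^k_*$ then visibly coincides with $\sum_{k\leq 2i-j}\operatorname{im}\pi^k_*$, so $F^j\CH^i(X)$ is an intrinsic invariant of the given system of projectors.

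Next I would attack (A) by the standard inductive recipe\,: construct $\pi^0$ and $\pi^{2d}$ directly from a zero-cycle of degree one, produce $\pi^1$ and $\pi^{2d-1}$ from the Picard and Albanese varieties together with a Poincar\'e divisor, and build the middle-degree projectors through a Bloch--Srinivas type decomposition of the diagonal. This recipe is known to succeed for curves, surfaces, abelian varieties, uniruled threefolds, complete intersections of small coniveau, and other special geometries, but in full generality no proof is available\,; one would hope to combine Kimura finite-dimensionality with the Lefschetz standard conjecture to extract projectors in middle codimension.

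Granting (A), the remaining hard core is (B$'$), which asserts $F^{i+1}\CH^i(X)=0$. It holds trivially in codimension zero, reduces to the Picard construction in codimension one, and in top codimension is exactly Bloch's conjecture on the representability of $\CH_0$. Once (B$'$) is available, (D) amounts to a nilpotency property of homological equivalence on Chow motives and is accessible via Kimura finite-dimensionality, while (C) can then be proved by showing that the difference of two systems of Chow--K\"unneth projectors lies in $F^1$ of $\CH^d(X\times X)$. The genuine obstacle is therefore (B$'$) in middle codimension\,; this is where, in the remainder of the present paper, strong geometric input such as the Franchetta property, multiplicativity, and finite-dimensionality must be invoked for the specific K3-type varieties under study.
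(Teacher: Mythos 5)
This statement is Murre's conjecture, which the paper merely states as a conjecture (citing \cite{Mur}) without any proof, remarking only afterwards that, by Jannsen \cite{J}, it is equivalent to the existence of the Bloch--Beilinson filtration; you are right to treat it as open and to offer a program rather than an argument. The one genuinely provable point, namely that for any Chow--K\"unneth decomposition the two descriptions $\bigcap_{k>2i-j}\ker(\pi^k_*)$ and $\sum_{k\leq 2i-j}\operatorname{im}(\pi^k_*)$ of $F^j\CH^i(X)$ coincide, is handled correctly in your first paragraph, so your treatment is consistent with the paper's.
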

	
	We note that Murre's conjecture for all smooth projective varieties is
	equivalent to the existence of the Bloch--Beilinson filtration\,; see \cite{J}
	for
	a precise statement. In particular, any Chow--K\"unneth decomposition induces a
	splitting of the conjectural Bloch--Beilinson filtration. As will be explained
	below in Remark~\ref{rmk:bigrading}, the following notion of multiplicative
	Chow--K\"unneth decomposition gives a sufficient condition for the above
	splitting to be compatible with intersection product.

	\subsection{Multiplicative Chow--K\"unneth decomposition} 
	
	Recall that if $X$ is a $d$-dimensional irreducible smooth scheme of finite
	type over a field, intersection
	product defines a (graded) ring structure on $\CH^\ast(X)=\bigoplus_i
	\CH^i(X)$\,;
	moreover, if $X$ is in addition proper, the intersection product is controlled
	by the class of the small diagonal $\delta_X := \{(x,x,x) \in X^3\}$ in
	$\CH^{2d}(X\times X\times X)$ in the sense that $(\delta_X)_*(\alpha\times
	\beta) = \alpha \cdot \beta$ for all $\alpha$ and $\beta \in \CH^*(X)$, where $\delta_X$ is viewed as a correspondence from $X\times X$ to $X$, or equivalently as a morphism $\h(X\times X)\to \h(X)$. Together with the canonical isomorphism $\h(X\times X)\simeq \h(X)\otimes \h(X)$, the small diagonal $\delta_X$
	endows the Chow motive $\h(X)$ with the structure of a unital commutative
	algebra object. (The unit is the fundamental class of $X$, seen as a morphism
	$\mathds{1} \to \h(X)$.) We write $$\mu : \h(X) \otimes \h(X) \to \h(X)$$ for
	the multiplication thus defined.

	\begin{defn}[Multiplicative Chow--K\"unneth (MCK) decomposition, Shen--Vial
		\cite{SV}]
		\label{def:MCK}
		Let $X$ be a smooth projective variety of dimension $d$.
		A Chow--K\"unneth decomposition 
		$$\h(X)=\h^{0}(X)\oplus \cdots \oplus \h^{2d}(X)$$ is called
		\emph{multiplicative}, if for any $0\leq i, j\leq 2d$, the restriction of the
		multiplication $\mu: \h(X)\otimes \h(X)\to \h(X)$ to the direct summand
		$\h^{i}(X)\otimes \h^{j}(X)$ factors through the direct summand
		$\h^{i+j}(X)$. 
	\end{defn}
	
	Note that a Chow--K\"unneth decomposition is always multiplicative modulo
	homological equivalence\,; the key point is to require this property modulo
	rational equivalence.
	In practice, it is useful to express the above notion in terms of projectors
	and
	correspondences.
	\begin{lem} 
		\label{lemma:MCKprojector}
		Let $X$ be a smooth projective variety of dimension $d$. Let $\{\pi^{0},
		\ldots,
		\pi^{2d}\}$ be the system of projectors corresponding to a Chow--K\"unneth
		decomposition of $X$ (see Definition~\ref{def:CK}). Then the following
		conditions are equivalent\,:
		\begin{enumerate}[$(i)$]
			\item The Chow--K\"unneth decomposition is multiplicative\,;
			\item For any $i, j, k$ such that $i+j\neq k$, we have $\pi^{k}\circ
			\delta_{X}\circ(\pi^{i}\otimes \pi^{j})=0$.
			\item For any $i, j$, we have $\pi^{i+j}\circ \delta_{X}\circ(\pi^{i}\otimes
			\pi^{j})=\delta_{X}\circ(\pi^{i}\otimes \pi^{j})$.
			\item $\delta_{X}=\sum_{i, j}\pi^{i+j}\circ \delta_{X}\circ(\pi^{i}\otimes
			\pi^{j})$.
		\end{enumerate} 
		Here $\delta_{X}$ denotes the small diagonal of $X^{3}$, viewed as a
		correspondence from $X\times X$ to $X$.
	\end{lem}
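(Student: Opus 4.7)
The plan is to exploit the identification of the multiplication $\mu : \h(X) \otimes \h(X) \to \h(X)$ with the small-diagonal correspondence $\delta_X$, together with the standard fact in a pseudo-abelian category that direct summands are determined by their projectors. Everything will then reduce to the idempotence, orthogonality and completeness properties of the $\pi^{i}$ recorded in Definition~\ref{def:CK}. I would organize the argument as the chain $(i)\Leftrightarrow(ii)\Leftrightarrow(iii)\Leftrightarrow(iv)$.

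For $(i)\Leftrightarrow(ii)$, the restriction of $\mu$ to $\h^{i}(X)\otimes \h^{j}(X)$ is precisely $\delta_{X}\circ(\pi^{i}\otimes\pi^{j})$, and a morphism $M\to \h(X)$ factors through the summand $\h^{k_{0}}(X)=\operatorname{Im}(\pi^{k_{0}})$ if and only if its composition with $\pi^{k}$ vanishes for every $k\neq k_{0}$. Specializing to $k_{0}=i+j$ yields the equivalence.

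For $(ii)\Leftrightarrow(iii)$, inserting $\Delta_{X}=\sum_{k}\pi^{k}$ on the left of $\delta_{X}\circ(\pi^{i}\otimes\pi^{j})$ gives $\sum_{k}\pi^{k}\circ\delta_{X}\circ(\pi^{i}\otimes\pi^{j})$, which collapses under $(ii)$ to the single term $k=i+j$, namely $(iii)$. Conversely, composing $(iii)$ on the left with $\pi^{k}$ and using orthogonality $\pi^{k}\circ\pi^{i+j}=0$ for $k\neq i+j$ recovers $(ii)$. For $(iii)\Leftrightarrow(iv)$, summing $(iii)$ over all $i,j$ and using $\sum_{i,j}\pi^{i}\otimes\pi^{j}=\Delta_{X}\otimes\Delta_{X}$ on the right-hand side yields $(iv)$; conversely, composing $(iv)$ on the right with $\pi^{i_{0}}\otimes\pi^{j_{0}}$ and invoking orthogonality leaves only the $(i_{0},j_{0})$-summand on the right, which gives back $(iii)$ for that pair.

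With the convention $\pi^{k}=0$ for $k\notin[0,2d]$, the case $i+j>2d$ in $(iii)$ simply records the vanishing $\delta_{X}\circ(\pi^{i}\otimes\pi^{j})=0$ forced by $\h^{i+j}(X)=0$. There is no genuine obstacle here\,: the lemma is organizational rather than substantive, and no step requires anything beyond the defining properties of the $\pi^{i}$ together with the identification of $\mu$ with $\delta_{X}$.
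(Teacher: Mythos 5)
Your proof is correct and follows essentially the same route as the paper: identify $\mu$ with $\delta_X$ so that $(i)\Leftrightarrow(ii)$ is immediate, then shuffle between $(ii)$, $(iii)$, $(iv)$ by inserting the completeness relation $\sum_k\pi^k=\Delta_X$ and composing with projectors to invoke orthogonality. The only difference is cosmetic\,: you prove the pairwise equivalences $(ii)\Leftrightarrow(iii)$ and $(iii)\Leftrightarrow(iv)$, whereas the paper runs the cycle $(ii)\Rightarrow(iv)\Rightarrow(iii)\Rightarrow(ii)$, using exactly the same manipulations.
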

	\begin{proof}
		Noting that $\mu$ is induced by $\delta_{X}$ by definition, the equivalence
		between $(i)$ and $(ii)$ becomes tautological.\\
		$(ii)\Longrightarrow (iv)$\,: by the completeness of the system
		$\sum_{i}\pi^{i}=\Delta_{X}$, we see that 
		$$\delta_{X}=\sum_{i,j,k}\pi^{k}\circ
		\delta_{X}\circ(\pi^{i}\otimes \pi^{j})=\sum_{k=i+j}\pi^{k}\circ
		\delta_{X}\circ(\pi^{i}\otimes \pi^{j})+\sum_{k\neq i+j}\pi^{k}\circ
		\delta_{X}\circ(\pi^{i}\otimes \pi^{j})=\sum_{i,j}\pi^{i+j}\circ
		\delta_{X}\circ(\pi^{i}\otimes \pi^{j}).$$
		$(iv)\Longrightarrow (iii)$\,: it is enough to post-compose both sides of
		$(iv)$
		with $\pi^{i}\otimes \pi^{j}$ and use the orthogonality between the
		projectors.\\
		$(iii)\Longrightarrow (ii)$\,: it suffices to pre-compose both sides of
		$(iii)$
		with $\pi^{k}$ and use the orthogonality between the projectors.
	\end{proof}

	It turns out that an MCK decomposition is automatically self-dual (Remark
	\ref{rmk:DualCK})\,:
	\begin{prop}[Multiplicativity implies self-duality {\cite[\S 6 Footnote
			24]{FuVialJAG}}]
		\label{prop:Duality}
		Let $\{\pi^{0}, \ldots, \pi^{2d}\}$ be a multiplicative Chow--K\"unneth
		decomposition  for a smooth projective variety  $X$ of dimension~$d$. Then it
		is
		self-dual, that is, $\pi^{i}={}^{t}\pi^{2d-i}$ for all $i$.
	\end{prop}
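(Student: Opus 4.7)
The plan is to use the multiplicativity of the Chow--K\"unneth decomposition to show that the motivic Poincar\'e pairing on $\h(X)$ is graded with respect to the decomposition, and then translate that vanishing into a composition identity among the $\pi^{i}$'s and the ${}^{t}\pi^{i}$'s that forces self-duality.

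First, I would observe that the counit $\epsilon:\h(X)\to \mathds{1}(-d)$ of Poincar\'e duality --- the morphism corresponding to the fundamental class $[X]\in \CH^{0}(X)$ under the identification $\Hom(\h(X),\mathds{1}(-d))=\CH^{0}(X)=\QQ\cdot[X]$ --- satisfies $\epsilon\circ \pi^{k}=0$ for every $k\neq 2d$. Indeed, $\Hom(\h(X),\mathds{1}(-d))$ is one-dimensional and injects into cohomology via Betti realization, while $(\epsilon\circ \pi^{k})_{*}$ acts on $\HH^{*}(X)$ as the projection to $\HH^{k}(X)$ followed by integration, which vanishes unless $k=2d$. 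Pre-composing $\epsilon$ with the multiplicativity identity of Lemma~\ref{lemma:MCKprojector}(iii) then yields
$$\epsilon\circ \delta_{X}\circ(\pi^{i}\otimes \pi^{j})=0\quad\text{whenever } i+j\neq 2d.$$

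Second, I would translate this into a statement about self-correspondences. The composite $\epsilon\circ \delta_{X}:\h(X)\otimes \h(X)\to \mathds{1}(-d)$ is precisely the motivic Poincar\'e pairing, represented by the class of the diagonal $[\Delta_{X}]\in \CH^{d}(X\times X)$. Unwinding the correspondence composition formula --- pushing forward $p_{13}^{*}\pi^{i}\cdot p_{23}^{*}\pi^{j}$ along $X\times X\times X\to X\times X$ --- one identifies the class $[\Delta_{X}]\circ(\pi^{i}\otimes \pi^{j})\in \CH^{d}(X\times X)$ with the composition of self-correspondences ${}^{t}\pi^{j}\circ \pi^{i}$. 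Combined with the vanishing above, this gives
$${}^{t}\pi^{j}\circ \pi^{i}=0\quad\text{whenever } i+j\neq 2d.$$

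Finally, I would use that $\sum_{j}{}^{t}\pi^{j}={}^{t}\Delta_{X}=\Delta_{X}$ to deduce, after post-composing with $\pi^{i}$ and invoking the previous vanishing, that $\pi^{i}={}^{t}\pi^{2d-i}\circ \pi^{i}$. Applying transposition to this (using ${}^{t}(A\circ B)={}^{t}B\circ {}^{t}A$) gives ${}^{t}\pi^{i}={}^{t}\pi^{i}\circ \pi^{2d-i}$, and substituting $i\mapsto 2d-i$ yields ${}^{t}\pi^{2d-i}={}^{t}\pi^{2d-i}\circ \pi^{i}$. Comparing with $\pi^{i}={}^{t}\pi^{2d-i}\circ \pi^{i}$ forces $\pi^{i}={}^{t}\pi^{2d-i}$, as desired. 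The only step requiring real care is the identification $[\Delta_{X}]\circ(\pi^{i}\otimes \pi^{j})={}^{t}\pi^{j}\circ \pi^{i}$ in $\CH^{d}(X\times X)$, which is a straightforward but notation-heavy bookkeeping with the composition formula --- the Chow-level incarnation of the elementary fact that the class representing the bilinear pairing $(\alpha,\beta)\mapsto \int_{X}\pi^{i}_{*}\alpha\cdot \pi^{j}_{*}\beta$ on cohomology is precisely the self-correspondence ${}^{t}\pi^{j}\circ \pi^{i}$.
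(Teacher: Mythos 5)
Your proof is correct and follows essentially the same route as the paper: both arguments compose the multiplicativity relation with the counit $\epsilon:\h(X)\to\1(-d)$, use that $\epsilon\circ\pi^{k}$ is a multiple of the fundamental class and hence vanishes for $k\neq 2d$, and identify $\epsilon\circ\delta_{X}$ with the diagonal class so that the relation becomes a composition identity among the $\pi^{i}$ and ${}^{t}\pi^{j}$. The only cosmetic difference is that you extract the termwise vanishing ${}^{t}\pi^{j}\circ\pi^{i}=0$ for $i+j\neq 2d$ and finish by transposing, whereas the paper first sums to get $\id=\sum_{i}{}^{t}\pi^{2d-i}\circ\pi^{i}$ and then composes with $\pi^{i}$ and ${}^{t}\pi^{2d-i}$.
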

	\begin{proof}
		Projecting both sides of $(iv)$ in Lemma \ref{lemma:MCKprojector} to the first
		two factors (or equivalently, pre-composing with the canonical morphism
		$\epsilon:
		\h(X)\to \1(-d)$ given by the fundamental class), one finds 
		$$\epsilon \circ \delta_{X}=\sum_{i,j}\epsilon \circ \pi^{i+j}\circ
		\delta_{X}\circ (\pi^{i}\otimes \pi^{j}).$$
As $\epsilon \circ \pi^k : \h(X) \to \1(-d)$ is a rational multiple of the fundamental class for all $k$, we get that $\epsilon \circ \pi^{i+j}=0$ for all $i+j\neq 2d$,  and $\epsilon\circ \pi^{2d} =\epsilon$.
		Therefore the equality simplifies to 
		$$\epsilon \circ \delta_{X}=\sum_{i}\epsilon \circ \delta_{X}\circ
		(\pi^{2d-i}\otimes \pi^{i}).$$
		Now noting that $\epsilon \circ \delta_{X}$ is the diagonal class
		$\Delta_{X}\in \CH^{d}(X\times X)$, we obtain
		$$\Delta_{X}=\sum_{i} ({}^{t}\pi^{2d-i}\otimes\pi^{i})_{*}\Delta_{X}.$$
		In other words, $\id=\sum_{i}{}^{t}\pi^{2d-i}\circ \pi^{i}$. This allows us to
		conclude by composing with $\pi^{i}$ and ${}^{t}\pi^{2d-i}$\,:
		$$\pi^{i}=\left(\sum_{j}{}^{t}\pi^{2d-j}\circ \pi^{j}\right)\circ
		\pi^{i}={}^{t}\pi^{2d-i}\circ\pi^{i}=
		{}^{t}\pi^{2d-i}\circ\left(\sum_{j}{}^{t}\pi^{2d-j}\circ
		\pi^{j}\right)={}^{t}\pi^{2d-i}.$$
	\end{proof}
	
	\begin{rmk}[Multiplicative bigrading]
		\label{rmk:bigrading}
		A multiplicative Chow--K\"unneth decomposition $\h(X)=\h^{0}(X)\oplus
		\cdots\oplus \h^{2d}(X)$ naturally gives rise to a multiplicative bigrading on
		the Chow ring $\CH^{*}(X)=\bigoplus_{i, s} \CH^{i}(X)_{(s)}$ with
		$$\CH^{i}(X)_{(s)}:=\CH^{i}\left(\h^{2i-s}(X)\right):=\Hom\left(\1(-i),
		\h^{2i-s}(X)\right).$$
		Here the multiplicativity means that
		\begin{equation}\label{eq:weaksplitting}
		\CH^{i}(X)_{(s)}\cdot \CH^{i'}(X)_{(s')}\subseteq \CH^{i+i'}(X)_{(s+s')},
		\end{equation}
		which clearly follows from the multiplicativity of the Chow--K\"unneth
		decomposition. 
		However, we note that actually any Chow--K\"unneth decomposition induces a
		splitting of the Chow groups of $X$ and the property that it is a ring
		grading,
		\emph{i.e.}, that $\CH^{i}(X)_{(s)}\cdot \CH^{i'}(X)_{(s')}\subseteq
		\CH^{i+i'}(X)_{(s+s')}$ is 
		strictly weaker than having a MCK decomposition\,; a very general curve of
		genus $\geq 2$ does
		not admit an MCK decomposition (Example~\ref{ex:NoMCK} below) but any
		splitting
		of its Chow groups induced by a Chow--K\"unneth decomposition is compatible
		with the
		intersection product.
		The new grading is chosen so that, via Murre's conjecture
		\ref{conj:Murre} (C), $$\CH^{i}(X)_{(s)}=\Gr_{F}^{s}\CH^{i}(X).$$
		In other words, a multiplicative Chow--K\"unneth decomposition should induce a
		multiplicative \emph{splitting} of the (conjectural) Bloch--Beilinson
		filtration on Chow
		groups.

		We call the indexation by $s$ the \emph{grade} of a cycle. For example, by
		Murre's conjecture \ref{conj:Murre} (B) and (D), all cycles of negative grade
		are expected to be zero and the subspace consisting of cycles of grade zero,
		$\CH^{*}(X)_{(0)}$, is expected to inject into the cohomology of $X$ via the
		cycle class map.  In particular, the subalgebra of $\CH^*(X)$ generated by
		$\CH^1(X)_{(0)}$ is expected to inject into cohomology\,; this is Beauville's
		so-called \emph{weak splitting property}~\cite{Beau3}.
	\end{rmk}
	
	\subsection{Which varieties admit a multiplicative Chow--K\"unneth
		decomposition\,?}
	Although a Chow--K\"unneth decomposition is conjectured to exist for all smooth
	projective varieties, there exist examples of varieties (in fact, examples of
	curves\,; see \emph{e.g.}\ Example~\ref{ex:NoMCK} below) that do not admit any
	MCK decomposition.
	Nonetheless, as shown by Shen--Vial~\cite{SV2}, the
	notion of multiplicative Chow--K\"unneth decomposition is robust enough to
	allow
	many standard procedures to produce new examples out of old ones.
	\begin{prop}[\cite{SV2}]
		\label{prop:SVmachine}
		Let $X$ and $Y$ be smooth projective varieties admitting  MCK decompositions
		$\{\pi^i_X\}$ and $\{\pi^i_Y\}$, respectively.
		\begin{itemize}
			\item  (Product) The product $X\times Y$ has a naturally induced MCK
			decomposition\,: for all $k$, $\pi_{X\times
				Y}^{k}:=\sum_{i=0}^{k}\pi^{i}_{X}\otimes \pi^{k-i}_{Y}$.
			\item (Projective bundle) If $E$ is a vector bundle on $X$ whose Chern
			classes
			are all of grade~0.
			then $\PP(E)$ has a natural MCK decomposition.
			\item (Blow-up) Suppose that $Y$ is a subvariety of $X$ and that, as an
			element in
			$\CH(X\times Y)$, the graph of the embedding is of grade~0 for the natural
			product MCK decomposition. Assume further that the Chern
			classes of the normal bundle are of grade 0 and the Chern classes of the
			tangent
			bundle of $X$ are of grade 0. Then the blow-up of $X$ along $Y$ admits a
			natural
			MCK decomposition.
			\item (Quotient) If a finite group $G$ acts on $X$ such that the graphs of
			the
			automorphisms (as elements in $\CH(X\times X)$) are of grade 0, then the
			quotient $X/G$ admits a natural MCK decomposition.
			\item (Hilbert scheme) Assume that the Chern classes of $X$ are of grade~0.
			Then the Hilbert schemes of length-2 and length-3 subschemes of $X$ admit an
			MCK.
		\end{itemize}
	\end{prop}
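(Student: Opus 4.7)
The overall strategy is uniform across the five parts: for each construction I would write down candidate projectors on the new variety by combining $\{\pi^i_X\}$ and, when applicable, $\{\pi^j_Y\}$ through whatever motivic decomposition theorem is available (K\"unneth for products, projective bundle formula, motivic blow-up formula, idempotent cutting for quotients). Verification that these define a Chow--K\"unneth decomposition reduces summand by summand to the corresponding properties on $X$ and $Y$. Multiplicativity is then checked via criterion $(iv)$ of Lemma~\ref{lemma:MCKprojector}\,: one expands the small diagonal of the new variety in terms of the ingredients provided by the chosen motivic decomposition, and the grade-zero hypotheses on the relevant Chern classes and correspondences are precisely what forces the error terms to vanish.

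For the Product, the candidate $\pi^k_{X\times Y}=\sum_{i+j=k}\pi^i_X\otimes\pi^j_Y$ works immediately because, under the reordering $(X\times Y)^3\cong X^3\times Y^3$, one has $\delta_{X\times Y}\cong\delta_X\otimes\delta_Y$, reducing identity $(iv)$ for $X\times Y$ to its validity for $X$ and $Y$ separately. For the Projective bundle $\PP(E)\to X$, I would use the isomorphism $\h(\PP(E))\cong\bigoplus_{i=0}^{r}\h(X)(-i)$ realized by the relative hyperplane class $\xi$\,; multiplicativity then reduces to Grothendieck's relation $\xi^{r+1}=-\sum_{i\geq 1}(-1)^ic_i(E)\xi^{r+1-i}$ together with the hypothesis that each $c_i(E)$ has grade zero in $\CH^*(X)$. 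For the Blow-up $\tau:\tilde X\to X$ along $Y$ of codimension $c$, the motivic blow-up formula $\h(\tilde X)\cong\h(X)\oplus\bigoplus_{i=1}^{c-1}\h(Y)(-i)$ supplies the projectors, and the grade-zero assumptions on the graph of the embedding $Y\hookrightarrow X$, on the Chern classes of $N_{Y/X}$, and on those of $T_X$ jointly control every term that appears when $\delta_{\tilde X}$ is expanded in this formula. For the Quotient, the idempotent $\pi_G:=\frac{1}{|G|}\sum_{g\in G}\Gamma_g$ cuts out $\h(X/G)$ inside $\h(X)$\,; since every $\Gamma_g$ has grade zero, $\pi_G$ commutes with each $\pi^i_X$, so the composites $\pi_G\circ\pi^i_X$ define a Chow--K\"unneth decomposition of $X/G$, and multiplicativity is inherited from $X$ via the split injection $p^*:\h(X/G)\hookrightarrow\h(X)$.

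Finally, the Hilbert scheme statements follow by combining the previous items with the classical descriptions $\Hilb^2(X)\cong\mathrm{Bl}_{\Delta_X}(X^2)/\Sy_2$ and, in the length-3 case, an analogous tower of blow-ups and symmetric quotients. This is where the main obstacle lies\,: the geometric description of $\Hilb^3(X)$ is significantly more delicate than the length-2 case, and one must verify at each intermediate step that the grade-zero hypotheses required by the Blow-up and Quotient parts are preserved after the preceding step (for instance, one must check that the diagonal graph, the conormal bundle of each exceptional center, and the tangent bundle of each intermediate blow-up all remain of grade zero). Since this bookkeeping is precisely the technical content of \cite{SV2}, my plan is to cite that work directly for the length-3 case and to give self-contained arguments only for the Product, Projective bundle, Blow-up and Quotient parts.
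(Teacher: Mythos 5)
Your proposal is correct in outline and follows the same route as the source the paper itself relies on: the paper gives no proof of Proposition~\ref{prop:SVmachine} at all, quoting it from \cite{SV2}, and your arguments for the product, projective bundle, blow-up and quotient cases (reduction to criterion $(iv)$ of Lemma~\ref{lemma:MCKprojector} through the relevant motivic decomposition, with the grade-zero hypotheses killing the off-diagonal terms, and with $\Hilb^2$ obtained as the $\mathbb{Z}/2$-quotient of the blow-up of $X\times X$ along the diagonal) are precisely those carried out in that reference. Since, like the paper, you ultimately cite \cite{SV2} for the genuinely technical length-3 bookkeeping, there is no substantive divergence or gap to report.
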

	
	Consequently, we may ask the following general, but vague, question\,:
	
	\begin{ques}
		\label{quest:MCK}
		Which smooth projective varieties constitute the ``building blocks'' of
		varieties admitting an MCK decomposition\,? For which of those can we expect an MCK decomposition to be
		unique\,?
	\end{ques}

	Note that, since homological equivalence and rational equivalence agree on
	powers of a
	variety whose motive is of Lefschetz type (for example, toric varieties,
	homogeneous varieties), such a variety admits a unique Chow--K\"unneth decomposition and
	this decomposition is multiplicative. \medskip
	
	The original and main motivation for studying MCK decompositions comes from the
	study of Chow rings of
	varieties with trivial canonical bundle. 
	A canonical multiplicative
	Chow--K\"unneth decomposition exists in the following cases\,: abelian
	varieties
	\cite{Beau}, \cite{DM}, \cite{MR1265530}, K3 surfaces \cite{BV} (interpreted by
	\cite[Proposition 8.4]{SV}) and some (conjecturally, \emph{all}) hyper-K\"ahler
	varieties \cite{SV}, \cite{V6}, \cite{FTV}, \cite{FLV}. 
	The case of Calabi--Yau varieties is not so clear\,: there are examples of
	Calabi--Yau
	varieties due to Beauville~\cite[Example
	2.1.5(b)]{Beau3} that do not 
	admit a
	Chow--K\"unneth decomposition inducing a grading
	satisfying~\eqref{eq:weaksplitting}, while examples of Calabi--Yau varieties
	with a MCK decomposition exist in all dimensions~\cite{LV}.
	Concerning the uniqueness of an MCK decomposition, we will show that this is
	the
	case for curves and for regular surfaces with finite-dimensional motive in the
	sense of Kimura. It is
	expected \cite{SV, FuVialJAG} that for hyper-K\"ahler varieties, if an MCK
	decomposition exists, then it is unique. Note however that an MCK decomposition
	is \emph{not} unique for abelian varieties\,; translating the canonical
	Chow--K\"unneth decomposition (which is multiplicative) of an abelian variety
	along a point that is not rationally equivalent to the origin provides a new
	MCK
	decomposition.\medskip

	Concerning the existence of an MCK decomposition in general, the answer is for
	the time
	being not clear in general.
	The main purpose of the paper is, beyond reviewing known
	examples of varieties admitting or not admitting an MCK decomposition, to
	explore whether varieties with ample or anti-ample canonical class 
	can be added to the list of varieties admitting such a
	decomposition.
	
	In the case of Fano varieties, there are examples of varieties (with motive not
	of Lefschetz type) that admit an MCK decomposition. By \cite[Proposition
	5.7]{FuVialJAG}, all Fermat cubic hypersurfaces admit an MCK
	decomposition. It is in fact conjectured  in
	\cite[Conjecture 5.8]{FuVialJAG} that all Fano or Calabi--Yau Fermat
	hypersurfaces admit MCK decompositions. Other
	examples have been exhibited in~\cite{d3}, \cite{Ver}, \cite{S2},
	\cite{B1B2} and Corollary~\ref{T:cubic-$c7$} shows that cubic fourfolds can be
	added to the list.
	On the other hand there are also examples of Fano varieties that do \emph{not}
	admit an MCK decomposition\,:

	\begin{ex}[Fano varieties may fail to have an MCK decomposition]
		\label{ex:NoMCK-Fano}
		In \cite[Example~2.1.5(a)]{Beau3}, Beauville constructed, by blowing up $\PP^3$
		along certain smooth curves, examples of Fano 3-folds such that the subalgebra
		generated by divisors does not inject in cohomology. Such Fano 3-folds cannot
		have an MCK decomposition. Indeed, by \cite[\S 4.2.2]{V4}, any Chow--K\"unneth
		decomposition of the blow-up $X$ of a curve inside $\PP^3$ will be such that
		$\CH^1(X) = \CH^1(X)_{(0)}$ and such that $\CH^*(X)_{(0)} \hookrightarrow
		\HH^*(X)$ is injective. In particular, for any choice of Chow--K\"unneth
		decomposition, $X$ fails to satisfy Beauville's weak splitting property, and
		hence by Remark~\ref{rmk:bigrading} any choice of Chow--K\"unneth decomposition
		for $X$ fails to be multiplicative.
	\end{ex}

	In the case of varieties with ample canonical bundle, we will review below that
	a very general curve of genus $>2$ (Example~\ref{ex:NoMCK})  does \emph{not}
	admit an MCK decomposition.
	Moreover, Proposition~\ref{P:SurfaceNoMCK} below suggests that a general
	surface in $\PP^3$ of degree $\geq 7$ also does not admit an MCK decomposition. 
	There are however examples of varieties with ample canonical bundle (and with
	motive not of Lefschetz type) that admit an MCK decomposition\,: this is the
	case for example for any product of hyperelliptic curves of genus $>1$ by
	combining Example~\ref{E:GS} with Proposition~\ref{prop:SVmachine}. 
	Our Theorem~\ref{T:Todorov} provides further examples, namely Todorov surfaces
	of type $(0,9)$ or $(1,10)$, that are not (birational to) products of curves.
	\medskip

	It turns out that the Todorov surfaces we study are intimately linked to K3
	surfaces. Likewise the Fano examples of \cite{d3}, \cite{Ver}, \cite{S2},
	\cite{B1B2}, as well as the cubic fourfolds of Corollary~\ref{T:cubic-$c7$}, have
	\emph{cohomology of K3 type}\,: these are Fano fourfolds with 
	Hodge numbers
	$h^{p,q} =0$ for all $p\not=q$ except for
	$h^{3,1}(X)=h^{1,3}(X)=1$. In the light of these examples, but also based (in
	the Fano case) on the folklore expectation that to any Fano
	variety of  K3 type can be associated geometrically (via a moduli construction)
	a hyper-K\"ahler
	variety, we ask whether every smooth projective variety with ample or anti-ample
	canonical bundle, and with cohomology of K3 type, admits a (unique)
	multiplicative Chow--K\"unneth decomposition\,; \emph{cf.}\
	Question~\ref{K3type}.

	\section{Curves}\label{S:curves}
	
	\subsection{Multiplicative Chow--K\"unneth decomposition for curves}
	As mentioned before, the projective line (a special case of homogeneous
	varieties) and elliptic curves (special cases of abelian varieties) admit
	canonical multiplicative Chow--K\"unneth decompositions. 
	Let $C$ be a smooth projective curve of genus $g\geq 2$. As it is Kimura
	finite-dimensional, by Remark \ref{rmk:pi0pi2d} and Proposition
	\ref{prop:Duality}, a multiplicative Chow--K\"unneth decomposition for $C$ must
	take the following form\,:
	\begin{equation}\label{eqn:CKCurve}
	\pi^{0}=z\times 1_{C}, \pi^{1}=\Delta_{C}-z\times 1_{C}-1_{C}\times z,
	\pi^{2}=1_{C}\times z,
	\end{equation}
	where $z$ is a 0-cycle of degree 1 and $1_{C}$ is the fundamental class. Given
	such a 0-cycle $z$, there is the natural embedding $\iota: C\to J(C)$, which
	sends a point $p\in C$ to $\mathcal{O}_{C}(p-z)$. Denote
	$[C]:=\iota_{*}(1_{C})\in \CH_{1}(J(C))$. 
	
	\begin{prop}[MCK decomposition for curves]
		\label{prop:MCKCurve}
		Notation is as above ($g\geq 2$).  Let $z$ be a 0-cycle of degree~1 on $C$.
		Then
		the following conditions are equivalent\,:
		\begin{enumerate}[$(i)$]
			\item the Chow--K\"unneth decomposition \eqref{eqn:CKCurve} determined by $z$
			is
			multiplicative\,;
			\item the modified small diagonal $\Gamma_{3}(C, z)=0$ in $\CH^{2}(C^{3})$,
			where $$\Gamma_{3}(C,
			z):=\delta_{C}-p_{12}^{*}(\Delta_{C})p_{3}^{*}(z)-p_{23}^{*}(\Delta_{C})p_{1}^{*}(z)-p_{13}^{*}(\Delta_{C})p_{2}^{*}(z)+p_{1}^{*}(z)p_{2}^{*}(z)+p_{1}^{*}(z)p_{3}^{*}(z)+p_{2}^{*}(z)p_{3}^{*}(z)\,;$$
			\item the class $[C]$ belongs to $\CH_{1}(J(C))_{(0)}$.
		\end{enumerate}
		In particular, if it exists, an MCK decomposition for $C$ is unique, given by
		$z=\frac{1}{2g-2}K_{C}$.
	\end{prop}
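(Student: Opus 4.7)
I proceed in three steps: establish $(i)\Leftrightarrow(ii)$ by a correspondence computation, then $(ii)\Leftrightarrow(iii)$ via the Jacobian, and finally extract the explicit formula $z=\tfrac{1}{2g-2}K_C$, which also yields uniqueness.

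For $(i)\Leftrightarrow(ii)$, I would apply Lemma~\ref{lemma:MCKprojector}$(iv)$, which reduces multiplicativity of \eqref{eqn:CKCurve} to the single identity
$$\delta_C\;=\;\sum_{0\leq i,j\leq 2}\pi^{i+j}\circ\delta_C\circ(\pi^i\otimes\pi^j)\quad\text{in }\CH^2(C^3).$$
Substituting $\pi^0=z\times 1_C$, $\pi^2=1_C\times z$, $\pi^1=\Delta_C-\pi^0-\pi^2$, and expanding the nine composition terms as cycles on $C^3$ via the projections $p_i\colon C^3\to C$ and $p_{ij}\colon C^3\to C\times C$, one obtains products of pullbacks of $\Delta_C$ and of $z$. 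Collecting terms and using $\deg z=1$, the difference $\delta_C-\mathrm{RHS}$ is identified with the Gross--Schoen modified small diagonal $\Gamma_3(C,z)$.

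For $(ii)\Leftrightarrow(iii)$, I would pass to the Jacobian via the sum map $\sigma\colon C^3\to J(C)$, $\sigma(x_1,x_2,x_3)=\iota(x_1)+\iota(x_2)+\iota(x_3)$. Using $\sigma_*(\alpha\times\beta\times\gamma)=\iota_*\alpha * \iota_*\beta * \iota_*\gamma$, where $*$ denotes the Pontryagin product on $\CH_\bullet(J(C))$, a direct pushforward computation yields
$$\sigma_*\Gamma_3(C,z)\;=\;[3]_*[C]-3\,[2]_*[C]*\iota_*z+3\,[C]*(\iota_*z)^{*2}\quad\text{in }\CH_1(J(C)).$$
Invoking Beauville's eigenspace decomposition $\CH_k(J(C))=\bigoplus_s\CH_k(J(C))_{(s)}$ (on which $[n]_*$ acts as $n^{2k+s}$), and $(\iota_*z)_{(0)}=[0_J]$, one matches this equation grade-by-grade to obtain the equivalence with $[C]\in\CH_1(J(C))_{(0)}$, following Beauville's and Polishchuk's analysis of algebraic cycles on Jacobian varieties. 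The main obstacle of the proof lies in this step.

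For the formula $z=\tfrac{1}{2g-2}K_C$ and uniqueness, I would assume $\Gamma_3(C,z)=0$, intersect with $p_{12}^*\Delta_C$, and push down via $p_3\colon C^3\to C$. The self-intersection formula $\Delta_C^2=-\delta_*K_C$ on $C\times C$ (coming from $N_{\Delta/C^2}\cong T_C$) gives $(p_3)_*(p_{12}^*\Delta_C\cdot\delta_C)=-K_C$ and $-(p_3)_*(p_{12}^*\Delta_C\cdot p_{12}^*\Delta_C\cdot p_3^*z)=(2g-2)z$, while the other five contributions cancel in pairs: the two remaining $p_{12}^*\Delta_C\cdot p_{ab}^*\Delta_C\cdot p_k^*z$ terms each contribute $-z$, two of the quadratic terms each contribute $+z$, and the third vanishes since $\Delta_C\cdot(z\times z)=\delta_*(z\cdot z)=0$. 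Collecting, $(p_3)_*(p_{12}^*\Delta_C\cdot\Gamma_3(C,z))=-K_C+(2g-2)z$ in $\CH^1(C)$, forcing $z=\tfrac{1}{2g-2}K_C$ and thereby establishing uniqueness.
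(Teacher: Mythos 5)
Your steps for $(i)\Leftrightarrow(ii)$ and for the uniqueness statement are correct and essentially identical to the paper's: the equivalence $(i)\Leftrightarrow(ii)$ is the direct projector computation via Lemma~\ref{lemma:MCKprojector}, and your intersection of $\Gamma_3(C,z)$ with $p_{12}^*\Delta_C$ followed by $(p_3)_*$ is exactly the paper's ``let $(ii)$ act on $\Delta_C$'', giving $-K_C+(2g-2)z=0$. Your pushforward formula $\sigma_*\Gamma_3(C,z)=[3]_*[C]-3\,[2]_*[C]*\iota_*z+3\,[C]*(\iota_*z)^{*2}$ is also correct (indeed more careful than the shorthand in the paper), and combined with the Beauville decomposition it does yield the implication $(ii)\Rightarrow(iii)$, since the relevant coefficients $3^{2+s}-3\cdot 2^{2+s}+3$ are non-zero for $s\geq 1$.

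The genuine gap is the converse direction of your step 2. You claim $(ii)\Leftrightarrow(iii)$ by ``matching grade-by-grade'' on $J(C)$, but the pushforward $\sigma_*\colon \CH_1(C^3)\to\CH_1(J(C))$ is very far from injective: condition $(iii)$ only constrains the single class $\iota_*[C]\in\CH_1(J(C))$, and even if one could deduce $\sigma_*\Gamma_3(C,z)=0$ from it (which already requires control of the positive-grade parts of $\iota_*z$), this says nothing about the vanishing of the cycle $\Gamma_3(C,z)$ in $\CH^2(C^3)$ itself. No amount of eigenspace bookkeeping on the Jacobian recovers a relation upstairs on $C^3$. This is precisely why the paper does not prove $(iii)\Rightarrow(ii)$ directly: it closes the circle of implications as $(i)\Leftrightarrow(ii)$, $(ii)\Rightarrow(iii)$, and then $(iii)\Rightarrow(i)$ by invoking \cite[Propositions 5.3 and 6.1]{FuVialJAG}, i.e., the theory of distinguished cycles (resting on O'Sullivan's symmetrically distinguished cycles on abelian varieties), which is exactly the mechanism that transports the hypothesis $[C]\in\CH_1(J(C))_{(0)}$ back into a statement about cycles on powers of $C$ and hence into multiplicativity of the decomposition \eqref{eqn:CKCurve}. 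Your plan, as written, lacks any substitute for this input, so the equivalence with $(iii)$ --- and with it the full proposition --- is not established.
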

	\begin{proof}
		The equivalence between $(i)$ and $(ii)$ follows from a direct computation
		using Lemma \ref{lemma:MCKprojector} (see
		\cite[Proposition 8.14]{SV}).\\
		$(ii)\Longrightarrow (iii)$ is proved in \cite[Proposition 7.1]{FuVialJAG}
		using
		an idea from \cite[Proposition~3.2]{BV}. Let $f : C^3 \to J(C)$ be the
		composition of the embedding $\iota^{3}: C^{3} \to J(C)^{3}$ followed by the
		summation on $J(C)$.  We have $$f_*(\Gamma_{3}(C, z)) = [3]_*[C] - 3[2]_*[C] +
		3[C] = 0 \quad \text{in}\
		\CH_1(J(C)).$$ Using the Beauville decomposition \cite{Beau} of $\CH_{1}(J(C))$,
		we see that $[C]$ belongs to $\CH_1(J(C))_{(0)}$.\\
		$(iii)\Longrightarrow (i)$ is implied by \cite[Propositions 5.3 and
		6.1]{FuVialJAG}.\\
		Finally, for the uniqueness, letting $(ii)$ act on $\Delta_{C}$, we get
		$c_{1}(T_{C})=(2-2g)z$. Hence $z$ is determined by the curve.
	\end{proof}
	
	The next two examples illustrate that Proposition \ref{prop:MCKCurve} leads to
	both existence and non-existence results on MCK decompositions for curves\,:
	\begin{ex}[Curves with MCK decompositions {\cite[Example~8.16]{SV}}]
		\label{E:GS}
		If $C$ is hyperelliptic, take $z$ to be a Weierstrass point, then
		Gross--Schoen
		\cite{GrossSchoen} proved that the modified small diagonal $\Gamma_{3}(C,z)$
		vanishes. Alternately, by \cite[Proposition	2.1]{Tavakol}, the class $[C]$ 
		belongs to $\CH_{1}(J(C))_{(0)}$. By
		Proposition \ref{prop:MCKCurve}, this implies that the Chow--K\"unneth
		decomposition
		\eqref{eqn:CKCurve} is multiplicative.
	\end{ex}
	
	\begin{ex}[Curves without MCK decompositions {\cite[\S 7]{FuVialJAG}}]
		\label{ex:NoMCK}
		As is pointed out before, any MCK decomposition for a curve $C$ is determined
		by
		a 0-cycle of degree 1. If $C$ admits an MCK decomposition, then Proposition
		\ref{prop:MCKCurve} $(iii)$ implies in particular that the \emph{Ceresa cycle}
		$[C]-[-1]_{*}[C]\in \CH_{1}(J(C))$ vanishes.
		
		Ceresa \cite{Ceresa} proved that the Ceresa cycle of a very general complex
		curve of genus $>2$ is not algebraically trivial\,; as such a very general complex curve of genus $>2$ does \emph{not} admit an MCK decomposition. 
		
		 As more explicit examples, 
		Otsubo \cite{Otsubo} proves that the Ceresa cycle of Fermat curves of degree
		$4\leq d \leq 1000$ is not algebraically trivial. Therefore, those curves do
		\emph{not}
		admit any MCK decomposition. A specialization argument (\emph{cf.}\ \cite[Lemma 3.2]{Vo}) then establishes that a very general plane curve of degree $4\leq d \leq 1000$ 
		does \emph{not} admit an MCK decomposition. 
	\end{ex}
	
	\begin{rmk}[MCK decomposition modulo algebraic equivalence]
		If instead of rational equivalence, we work with algebraic
		equivalence, the analogue of Proposition \ref{prop:MCKCurve} still holds
		and the choice of the 0-cycle $z$ becomes irrelevant. More precisely, given a
		smooth projective curve $C$, the following conditions are equivalent\,:
		\begin{enumerate}[$(i)$]
			\item $C$ admits a multiplicative Chow--K\"unneth decomposition modulo
			algebraic
			equivalence (which, again, must be of the form \eqref{eqn:CKCurve})
			\item The modified small diagonal $\Gamma_{3}(C, \operatorname{pt})$ is
			algebraically trivial.
			\item The class $[C]$ belongs to $\CH_{1}(J(C))_{(0)}/\operatorname{alg}$.
		\end{enumerate}
		Of course, $(iii)$ implies as before\\
		$(iv)$ the Ceresa cycle $[C]-[-1]_{*}[C]$ is algebraically trivial.\\
		Now the point of this remark is that $(iv)$ is actually equivalent to $(iii)$,
		hence also to $(i)$, or $(ii)$. Indeed, in the Beauville decomposition  
		$$\CH_{1}(J(C))=\bigoplus_{s=0}^{g-1}\CH_{1}(J(C))_{(s)},$$ denote by
		$C_{(s)}$
		the grade-$s$ component of the class $[C]$. It is well-defined modulo 
		algebraic equivalence. Then $(iv)$ implies that $C_{(1)}=0$. By Marini's
		result
		\cite[Corollary 26]{MR2414143}, we have $C_{(s)}$ is algebraically trivial for all $s>0$, that is,
		$[C]\in\CH_{1}(J(C))_{(0)}/\operatorname{alg}$.
		
		In conclusion, the vanishing of the Ceresa cycle characterizes the
		multiplicativity of the Chow--K\"unneth decomposition modulo algebraic
		equivalence.
	\end{rmk}

	\subsection{On the tautological ring of powers of curves}	
	The following proposition, essentially due to Tavakol~\cite{Tavakol}, relates
	for a given curve the existence of an MCK decomposition to the existence of
	enough relations in the tautological ring.
	
	\begin{prop}\label{P:inj-curves}
		Let $C$ be a smooth projective complex curve.
		Let $\operatorname{R}^\ast(C^m)\subseteq \CH^\ast(C^m)$ denote the subring
		generated by pullbacks of the canonical divisor and of the diagonal
		$\Delta_C$.
		Then $C$ admits an MCK decomposition if and only if the
		cycle class map induces injections
		\[  \operatorname{R}^{i}(C^m)\hookrightarrow\ \HH^{2i}(C^m)\]
		for all  positive integers $m$ and all $i$. 
	\end{prop}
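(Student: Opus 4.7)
The plan is to reduce to the key equivalence provided by Proposition~\ref{prop:MCKCurve}. Recall that for a curve $C$ of genus $g \geq 2$, an MCK decomposition exists if and only if the modified small diagonal $\Gamma_{3}(C, z) \in \CH^{2}(C^{3})$ vanishes, where $z := \tfrac{1}{2g-2}K_{C}$; the cases $g \leq 1$ are straightforward (Lefschetz-type motive for $g=0$, and a canonical MCK from the origin for $g=1$). Two elementary facts are central to both directions: the class $\Gamma_{3}(C, z)$ lies in $\operatorname{R}^{2}(C^{3})$ by its very definition, and it is cohomologically trivial, as a direct K\"unneth computation shows.

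For the implication $(\Leftarrow)$, one simply applies the injectivity hypothesis with $m = 3$ to the cohomologically trivial class $\Gamma_{3}(C, z) \in \operatorname{R}^{2}(C^{3})$, forcing $\Gamma_{3}(C, z) = 0$ in $\CH^{2}(C^{3})$. Proposition~\ref{prop:MCKCurve} then yields an MCK decomposition.

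For the converse implication $(\Rightarrow)$, assuming $\Gamma_{3}(C, z) = 0$ in $\CH^{2}(C^{3})$, I would follow the strategy of Tavakol~\cite{Tavakol} to show that the cycle class map $\operatorname{R}^{*}(C^{m}) \to \HH^{2*}(C^{m}, \QQ)$ is injective for every $m$. The idea is to present both the Chow-theoretic tautological ring $\operatorname{R}^{*}(C^{m})$ and its cohomological image $\operatorname{P}^{*}(C^{m}) := \operatorname{Im}\bigl(\operatorname{R}^{*}(C^{m}) \to \HH^{2*}(C^{m}, \QQ)\bigr)$ as quotients of a common free graded-commutative $\QQ$-algebra on the generators $K_{i} := p_{i}^{*}K_{C}$ and $\Delta_{ij} := p_{ij}^{*}\Delta_{C}$, and to verify that the same list of relations suffices for each. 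The relations in cohomology are of three types: the self-intersection identity for the diagonal on $C^{2}$, the obvious vanishing $K_{i}^{2} = 0$ coming from dimension reasons on the curve, and the pullbacks of the cohomological relation $[\Gamma_{3}(C, z)] = 0$ along the various coordinate projections $C^{m} \to C^{3}$. The first two hold tautologically in $\CH^{*}(C^{m})$, while the third is precisely our hypothesis. Consequently, every relation in $\operatorname{P}^{*}(C^{m})$ already holds in $\operatorname{R}^{*}(C^{m})$, so the natural surjection $\operatorname{R}^{*}(C^{m}) \twoheadrightarrow \operatorname{P}^{*}(C^{m})$ is an isomorphism.

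The main obstacle is the final step of the forward direction: showing that the three families of relations listed above suffice to present $\operatorname{P}^{*}(C^{m})$. This requires a delicate normal-form argument on monomials in the generators $K_{i}$ and $\Delta_{ij}$, reducing each to a canonical representative modulo the listed relations, together with a dimension count in each graded piece matching the resulting abstract quotient against $\operatorname{P}^{*}(C^{m})$. This combinatorial core of the argument is carried out in~\cite{Tavakol}.
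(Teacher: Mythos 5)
Your overall skeleton matches the paper's: reduce to Proposition~\ref{prop:MCKCurve}, and for the direction $(\Leftarrow)$ apply the injectivity on $C^3$ to the cohomologically trivial tautological class $\Gamma_3(C,z)$ with $z=\tfrac{1}{2g-2}K_C$. That half is fine. The gap is in the forward direction, in your claimed presentation of the cohomological tautological ring $\operatorname{P}^*(C^m)$. The relations you list --- the self-intersection identity for the diagonal, $K_i^2=0$, and the pullbacks of the Gross--Schoen relation $[\Gamma_3(C,z)]=0$ --- do \emph{not} generate the full ideal of cohomological relations among the classes $p_i^*K_C$ and $p_{ij}^*\Delta_C$. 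What Tavakol actually proves (see also \cite[Remark~3.8(i)]{Yin}) is that this ideal is generated by the Faber--Pandharipande relation, the Gross--Schoen relation, \emph{and} a third, independent relation coming from Kimura finite-dimensionality, i.e.\ reflecting the vanishing of $\wedge^{2g+1}\HH^1(C)$. This last relation only appears on powers $C^m$ with $m$ large relative to $g$ and is not a consequence of Gross--Schoen together with the tautological identities (whereas Faber--Pandharipande \emph{is} a consequence of Gross--Schoen, by letting both sides act on $K_C$ as correspondences from $C$ to $C\times C$ --- this is how the paper disposes of it).

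Consequently your normal-form/dimension-count argument cannot show that $\operatorname{R}^*(C^m)\twoheadrightarrow \operatorname{P}^*(C^m)$ is an isomorphism from your list alone: for large $m$ the quotient of the free algebra by your relations is strictly larger than $\operatorname{P}^*(C^m)$, and deferring the combinatorial core to \cite{Tavakol} does not repair this, since Tavakol's presentation includes the finite-dimensionality relation as a generator. To close the argument you must also know that this extra relation holds modulo \emph{rational} equivalence; that is precisely Kimura's theorem that the motive of a curve is finite-dimensional \cite{Kimura}, which is the one additional input in the paper's proof and is absent from yours. (A further, minor point: for $g\le 1$ the class $z=\tfrac{1}{2g-2}K_C$ is not defined when $g=1$ and the forward direction still requires an argument for injectivity of $\operatorname{R}^*(C^m)$, but this is peripheral to the main gap.)
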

	\begin{proof}
		Tavakol \cite{Tavakol} shows that the cohomological relations among $p_i^*K_C$
		and $p_{ij}^*\Delta_C$ are generated by 3 relations, namely the 
		Faber--Pandharipande relation,
		the Gross--Schoen relation and a relation implied by the finite-dimensionality relation\,; see also
		\cite[Remark~3.8(i)]{Yin} and \cite[\S 2.3]{FLV2}. By Kimura \cite{Kimura}, the finite-dimensionality
		relation holds modulo rational equivalence. The Gross--Schoen relation is
		nothing but the vanishing of the modified small diagonal. By
		Proposition~\ref{prop:MCKCurve}, this relation is equivalent to the existence of
		an MCK decomposition. Finally, the Faber--Pandharipande relation, $p_1^*K_C\cdot
		p_2^*K_C = \deg(K_C)\, p_1^*K_C \cdot \Delta_C$, can be obtained by making both
		sides of the Gross--Schoen relation, viewed as correspondences from $C$ to
		$C\times C$, act on $K_C$.
	\end{proof}

	\section{Regular surfaces}\label{S:surfaces}
	
	\subsection{Multiplicative Chow--K\"unneth decomposition for regular surfaces}
	Let $S$ be a \emph{regular surface}, that is, a smooth projective complex
	surface with $\HH^{1}(S, \mathcal{O}_{S})=0$. Then for any 0-cycle of degree 1
	on $S$, we have a self-dual Chow--K\"unneth decomposition 
	\begin{equation}\label{eqn:CKsurface}
	\pi^{0}=z\times 1_{S}\ ,\ \pi^{4}=1_{S}\times z\ ,\ 
	\pi^{2}=\Delta_{S}-\pi^{0}-\pi^{4}\ , \ \pi^{1}=\pi^{3}=0.
	\end{equation}
	Assuming the Kimura finite-dimensionality conjecture \cite{Kimura}, any
	self-dual Chow--K\"unneth decomposition  should be of this form (Remark
	\ref{rmk:pi0pi2d}). Similarly to
	Proposition \ref{prop:MCKCurve}, we have the following result.
	
	\begin{prop}[MCK decomposition for regular surfaces]
		\label{prop:MCKsurface}
		Let $S$ be a regular smooth projective surface. Let $z$ be a 0-cycle of degree
		1
		on $S$. Then the following conditions are equivalent\,:
		\begin{enumerate}[(i)]
			\item[(i)] the Chow--K\"unneth decomposition \eqref{eqn:CKsurface} is
			multiplicative\,;
			\item[(ii)] the modified small diagonal $\Gamma_{3}(S, z) \in \CH^{4}(S^{3})$
			vanishes, where $$\Gamma_{3}(S,
			z):=\delta_{S}-p_{12}^{*}(\Delta_{S})p_{3}^{*}(z)-p_{23}^{*}(\Delta_{S})p_{1}^{*}(z)-p_{13}^{*}(\Delta_{S})p_{2}^{*}(z)+p_{1}^{*}(z)p_{2}^{*}(z)+p_{1}^{*}(z)p_{3}^{*}(z)+p_{2}^{*}(z)p_{3}^{*}(z).$$
		\end{enumerate}
		Moreover, they imply the following two properties\,:
		\begin{enumerate}[(i)]
			\item[(iii)] $\operatorname{Im}\left(\CH^{1}(S)\otimes
			\CH^{1}(S)\xrightarrow{\cdot} \CH^{2}(S)\right)=\Q\cdot z$.
			\item[(iv)] $c_{2}(T_{S})=\chi_{\operatorname{top}}(S)\, z$, where
			$\chi_{\operatorname{top}}$ is the topological Euler characteristic.
		\end{enumerate}
		In particular, if it exists, an MCK decomposition of the form
		\eqref{eqn:CKsurface} for $S$ is unique. 
	\end{prop}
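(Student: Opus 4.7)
My plan is to follow the strategy used for curves in Proposition~\ref{prop:MCKCurve}. The equivalence $(i) \Leftrightarrow (ii)$ is purely formal: substituting $\pi^{0}=z\times 1_{S}$, $\pi^{4}=1_{S}\times z$, $\pi^{2}=\Delta_{S}-\pi^{0}-\pi^{4}$ and $\pi^{1}=\pi^{3}=0$ into the identity $\delta_{S}=\sum_{i,j}\pi^{i+j}\circ \delta_{S}\circ (\pi^{i}\otimes \pi^{j})$ of Lemma~\ref{lemma:MCKprojector}$(iv)$ and expanding, one checks directly that this identity holds if and only if $\Gamma_{3}(S,z)=0$, mirroring \cite[Proposition~8.14]{SV}.

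For the implications to $(iii)$ and $(iv)$, the key move is to view $\Gamma_{3}(S,z)$ as a correspondence $S\times S\to S$ via the projection $p_{3}$ and to apply the vanishing $\Gamma_{3}(S,z)=0$ to well-chosen test classes in $\CH^{2}(S^{2})$. For $(iii)$, I would apply it to $\alpha\otimes\beta$ with $\alpha,\beta\in \CH^{1}(S)$: every term involving $p_{1}^{*}(z)$ or $p_{2}^{*}(z)$ contributes a factor in $\CH^{3}(S)=0$ and hence vanishes, leaving only $\delta_{S*}(\alpha\otimes \beta)=\alpha\cdot\beta$ and $(p_{12}^{*}(\Delta_{S})\cdot p_{3}^{*}(z))_{*}(\alpha\otimes\beta)=\deg(\alpha\cdot\beta)\,z$. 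Setting the difference to zero yields $\alpha\cdot\beta=\deg(\alpha\cdot\beta)\,z$, which gives the inclusion in $(iii)$\,; the reverse inclusion follows by taking $\alpha=\beta=H$ for an ample divisor $H$, since $\deg(H^{2})\neq 0$.

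For $(iv)$, I would apply the same correspondence to $\Delta_{S}\in \CH^{2}(S^{2})$. The central computation is $\delta_{S*}(\Delta_{S})=c_{2}(T_{S})$, justified by the self-intersection formula $\Delta_{S}^{*}(\Delta_{S})=c_{2}(T_{S})$ combined with $p_{3}|_{\delta_{S}}=\id_{S}$. The remaining six terms all reduce to elementary pushforwards of $0$-cycles, yielding $\chi_{\operatorname{top}}(S)\,z$ (for $p_{12}^{*}(\Delta_{S})\cdot p_{3}^{*}(z)$, via Gauss--Bonnet applied to $\Delta_{S}^{2}=\Delta_{S*}(c_{2}(T_{S}))$), $z$ (for each of the four terms $p_{23}^{*}(\Delta_{S})\cdot p_{1}^{*}(z)$, $p_{13}^{*}(\Delta_{S})\cdot p_{2}^{*}(z)$, $p_{1}^{*}(z)\cdot p_{3}^{*}(z)$, and $p_{2}^{*}(z)\cdot p_{3}^{*}(z)$, using the transversality $p_{12}^{*}(\Delta_{S})\cdot p_{ij}^{*}(\Delta_{S})=\delta_{S}$ for the first two), and $0$ (for $p_{1}^{*}(z)\cdot p_{2}^{*}(z)$, since $z\cdot z\in \CH^{4}(S)=0$). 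Summing with the signs prescribed by the definition of $\Gamma_{3}$, the four $\pm z$ contributions cancel in pairs, leaving $c_{2}(T_{S})-\chi_{\operatorname{top}}(S)\,z=0$.

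Uniqueness follows from $(iii)$: if $z$ and $z'$ are two $0$-cycles of degree $1$ both giving MCK decompositions of the form \eqref{eqn:CKsurface}, then $H^{2}=\deg(H^{2})\,z=\deg(H^{2})\,z'$ for any ample divisor $H$, whence $z=z'$ in $\CH^{2}(S)$. The most delicate step in the plan is the bookkeeping of signs and multiplicities in the seven terms entering $(iv)$\,; each individual pushforward is routine, but the excess-intersection computation giving $\delta_{S*}(\Delta_{S})=c_{2}(T_{S})$ deserves some care.
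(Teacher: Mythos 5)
Your proposal is correct and follows essentially the same route as the paper: the equivalence $(i)\Leftrightarrow(ii)$ is the direct computation via Lemma~\ref{lemma:MCKprojector} (the surface case is \cite[Proposition~8.4]{SV}, not 8.14, which is the curve case), and $(iii)$, $(iv)$ are obtained exactly as in the paper by letting $\Gamma_3(S,z)$ act on the exterior product of two divisors and on $\Delta_S$, respectively. Your detailed term-by-term bookkeeping (including $\delta_{S*}(\Delta_S)=c_2(T_S)$ and the cancellation of the four $\pm z$ terms) and the uniqueness argument via $(iii)$ with an ample class are correct and merely spell out what the paper leaves implicit.
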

	\begin{proof}
		The equivalence between $(i)$ and $(ii)$ is a direct computation using Lemma
		\ref{lemma:MCKprojector}, which was
		first observed in \cite[Proposition~8.4]{SV}. The implication from them to
		$(iii)$ and
		$(iv)$ are proved as in \cite{BV}\,: let both sides of $(ii)$ act on the
		exterior
		product of two divisors to  obtain $(iii)$ and  on~$\Delta_{S}$ to get $(iv)$.
	\end{proof}
	
	\begin{ex}[Regular surfaces with an MCK decomposition] \label{E:surfaces}
		First we note that any Chow--K\"unneth decomposition of a surface whose Chow
		motive is of
		Lefschetz type is multiplicative. Hence any complex surface with trivial Chow
		group of 0-cycles admits an MCK decomposition. Beyond the above obvious
		examples, the following regular surfaces are known to admit an MCK
		decomposition\,:	K3 surfaces,
		certain elliptic surfaces constructed by Schreieder \cite{Schreieder} and
		Todorov surfaces of type $(0,9)$ or $(1,10)$. The case of K3 surfaces is
		seminal
		and is due to Beauville--Voisin \cite{BV} who establish the existence of a
		canonical 0-cycle $o$ of degree $1$ such that the modified small diagonal
		$\Gamma_3(S,o)$ vanishes. This was reinterpreted, via a direct computation, as
		saying that a K3 surface admits an MCK decomposition in \cite[\S 8]{SV}. The
		case of the Schreieder surfaces is \cite[Theorem~2]{LV}, while the case of
		Todorov surfaces is the content of Theorem~\ref{T:Todorov} to be proven below.
	\end{ex}
	
	\begin{rmk}[MCK decomposition on the image] \label{R:MCKdescend}
		Let $S$ be a smooth projective regular surface admitting a multiplicative
		Chow--K\"unneth decomposition \eqref{eqn:CKsurface}. Let $f: S\to S'$ be a
		surjective morphism to another smooth projective surface. Then $S'$ must be
		regular and it admits a multiplicative Chow--K\"unneth decomposition. Indeed,
		by
		Proposition \ref{prop:MCKsurface}, we have a degree-one 0-cycle $z$ on $S$
		such
		that $\Gamma_{3}(S, z)=0$ in $\CH^{4}(S^{3})$. It is easy to check that
		$(f,f,f)_{*}(\Gamma_{3}(S, z))=\deg(f)\Gamma_{3}(S', f_{*}(z))$. Again by
		Proposition \ref{prop:MCKsurface}, $S'$ has an MCK decomposition. Note that if
		$f: C\to C'$ is a dominant morphism of curves, the same argument shows that a
		MCK decomposition for $C$ yields an MCK decomposition for $C'$.
	\end{rmk}
	
	Proposition \ref{prop:MCKsurface} $(iii)$ and $(iv)$ give obstructions to the
	existence of multiplicative Chow--K\"unneth decompositions for regular
	surfaces.
	Moreover, we can use this to see the difference between $(iii)$ and MCK
	decompositions\,:
	\begin{prop}[MCK decomposition \emph{v.s.}\ degeneration of intersection
		product]
		\label{P:SurfaceNoMCK}
		For any $d\geq 7$, a very general smooth surface of degree $d$  in $\PP^{3}$
		does not admit
		any multiplicative Chow--K\"unneth decomposition of the form
		\eqref{eqn:CKsurface}.
		However as a very general surface has Picard number~1,
		the conditions $(iii)$ and $(iv)$ in Proposition \ref{prop:MCKsurface} are
		obviously satisfied with $z=\frac{1}{d}c_{1}(\mathcal{O}(1))^{2}$.
	\end{prop}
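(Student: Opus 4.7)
My plan is to split the statement into the easy verification of $(iii)$ and $(iv)$ of Proposition~\ref{prop:MCKsurface}, and the harder task of ruling out the MCK itself. For the easy part, the Noether--Lefschetz theorem (valid for $d\geq 4$) says that for a very general smooth degree-$d$ surface $S\subset\PP^3$ one has $\Pic(S)=\Z\cdot H$, where $H=c_1(\calO(1))|_S$. Hence $\CH^1(S)\cdot\CH^1(S)$ is generated by $H^2$, and since $\deg(H^2)=d$, condition $(iii)$ holds with (and forces) $z=\tfrac{1}{d}H^2$. The Chern-class calculation $c(T_S)=(1+H)^4/(1+dH)|_S$ gives $c_2(T_S)=(d^2-4d+6)H^2=\chi_{\mathrm{top}}(S)\cdot z$, which is $(iv)$.

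For the harder part, assume $S$ admits an MCK decomposition of the form~\eqref{eqn:CKsurface}. By the uniqueness clause of Proposition~\ref{prop:MCKsurface} together with the previous step, its distinguished $0$-cycle must be $z=\tfrac{1}{d}H^2$, and condition $(ii)$ then forces $\Gamma_3(S,z)=0$ in $\CH^4(S^3)$. Viewing $\Gamma_3(S,z)$ as a correspondence from $S$ to $S\times S$ via $(p_1,p_{23})$ and acting on any degree-zero $0$-cycle $\alpha\in\CH^2(S)_{\hom}$, I would carry out a direct computation in which every term involving $\alpha\cdot z$ drops because $\CH^4(S)=0$ and the $p_2^*z\cdot p_3^*z$ term drops because $\deg(\alpha)=0$, to obtain
$$(\Gamma_3(S,z))_{*}(\alpha)\;=\;(\Delta_S)_{*}(\alpha)-\alpha\times z-z\times\alpha \quad \text{in } \CH^4(S\times S).$$
Thus MCK would force the identity $(\Delta_S)_{*}(\alpha)=\alpha\times z+z\times\alpha$ in $\CH_0(S\times S)$ for \emph{every} $\alpha\in\CH^2(S)_{\hom}$.

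The main obstacle is to exhibit some $\alpha$ for which the above identity fails, for $S$ very general of degree $d\geq 7$. I would attempt this via an infinitesimal variational argument in the spirit of Nori and Voisin: letting $\alpha=[P]-[Q]$ with $P,Q$ tracing out $S$, the family of $0$-cycles $(\Delta_S)_{*}\alpha-\alpha\times z-z\times\alpha$ on $S\times S$ defines a variational class whose infinitesimal invariant should sit in a Koszul-cohomology group of $\calO_{\PP^3}(d)$. The threshold $d\geq 7$ should enter as the Green--Voisin vanishing bound beyond which this invariant is generically nonzero, contradicting the identity. Pinning down the precise Koszul-cohomological input, or locating the corresponding non-vanishing result already in the literature, is the technical heart of the argument.
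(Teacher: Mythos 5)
Your reduction is fine as far as it goes: the verification of $(iii)$ and $(iv)$ via Noether--Lefschetz, the identification $z=\tfrac{1}{d}h^{2}$, and the computation showing that $\Gamma_{3}(S,z)=0$ would force $(\Delta_{S})_{*}(\alpha)=\alpha\times z+z\times\alpha$ in $\CH_{0}(S\times S)$ for all $\alpha\in\CH^{2}(S)_{\hom}$ are all correct. But the proof has a genuine gap exactly where you flag it yourself: you never exhibit an $\alpha$ for which this identity fails, and the proposed route for doing so is not substantiated. Note that the identity is not absurd on its face --- it holds for K3 surfaces with $z$ the Beauville--Voisin class --- so its failure must come from an input specific to these general-type surfaces, not from a formal argument. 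Moreover, Green--Voisin-type infinitesimal invariants are designed for Abel--Jacobi/Griffiths-group statements in intermediate degree; for $0$-cycles on $S\times S$ it is far from clear that such an invariant detects non-triviality in $\CH_0$, and the guess that $d\geq 7$ is ``the Green--Voisin vanishing bound'' is not the actual source of the threshold.

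The paper's argument is quite different and much shorter. It does not try to detect the failure on the very general surface directly. Instead it invokes O'Grady's construction \cite{MR3522252} (generalizing the octic example of \cite{MR3077892}) of \emph{special} smooth surfaces $S$ of every degree $d$ with
$$\dim\operatorname{Im}\bigl(\CH^{1}(S)\otimes\CH^{1}(S)\to\CH^{2}(S)\bigr)\ \geq\ \Bigl[\tfrac{d-1}{3}\Bigr],$$
which is $\geq 2$ precisely when $d\geq 7$; this violates condition $(iii)$ of Proposition~\ref{prop:MCKsurface}, so such an $S$ admits no MCK of the form \eqref{eqn:CKsurface}, and in particular $\Gamma_{3}\bigl(S,\tfrac{1}{d}h^{2}\bigr)\neq 0$. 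Since this cycle is defined universally over the family of all smooth degree-$d$ surfaces, its non-vanishing at one member propagates to the very general member by the standard spread/Hilbert-scheme argument (\cite[Lemma~3.2]{Vo}), and the Picard-rank-one argument then pins down $z=\tfrac{1}{d}h^{2}$ on the very general member, giving the contradiction. So the missing ``technical heart'' of your proposal is precisely what the paper replaces by O'Grady's degeneration-of-intersection-product examples plus specialization; if you want to salvage your approach you would need an actual non-vanishing theorem for the cycle $(\Delta_{S})_{*}\alpha-\alpha\times z-z\times\alpha$ on the very general surface, which is not available by the variational mechanism you sketch.
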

	
	\begin{proof}
		Generalizing the octic surface example in \cite[\S 1.4]{MR3077892}, O'Grady
		\cite{MR3522252} constructed, for each integer $d$, a smooth surface $S$ of
		degree $d$ in $\PP^{3}$ with $$\dim\operatorname{Im}\left(\CH^{1}(S)\otimes
		\CH^{1}(S)\xrightarrow{\cdot} \CH^{2}(S)\right)\geq
		\left[\frac{d-1}{3}\right].$$
		So by Proposition \ref{prop:MCKsurface} such a surface $S$ does not have an
		MCK
		decomposition of the form \eqref{eqn:CKsurface} when $d$ is at least 7. By
		Proposition \ref{prop:MCKsurface}, the modified small diagonal $\Gamma_{3}(S,
		\frac{1}{d}c_{1}(\mathcal{O}(1))^{2})$ does not vanish. However, since the
		cycle
		$\Gamma_{3}(S, \frac{1}{d}c_{1}(\mathcal{O}(1))^{2})$ is defined universally
		for
		all smooth degree-$d$ surfaces, its non-vanishing on one member, namely $S$,
		implies that it is non-trivial for a very general member by an argument using
		Hilbert schemes (\emph{cf.}\ \cite[Lemma 3.2]{Vo}).
	\end{proof}

	\subsection{On the tautological ring of powers of regular surfaces}	
	The following proposition due to Q.~Yin~\cite{Yin}, which is the analogue of
	Proposition~\ref{P:inj-curves},  gives yet another
	characterization of regular surfaces admitting an MCK decomposition.

	\begin{prop}\label{P:inj}
		Let $S$ be a smooth projective complex surface.
		Let $\operatorname{R}^\ast(S^m)\subset \CH^\ast(S^m)$ denote the subring
		generated by pullbacks of divisors, Chern classes, and diagonals $\Delta_S$.
		Assume that $S$ is regular and that $p_g := \HH^0(S,\Omega^2_S) >0$. Then $S$
		admits an MCK decomposition of the form
		\eqref{eqn:CKsurface} if and only if the
		cycle class map induces injections
		\[  \operatorname{R}^{i}(S^m)\hookrightarrow\ \HH^{2i}(S^m)\]
		for all  $m \leq 2b_{2, tr}(S)+1$  and all $i$. Here $b_{2, tr}(S)$ is the
		dimension of the transcendental cohomology of $S$, \emph{i.e.}, the smallest
		Hodge
		substructure of $\HH^2(S)$ whose complexification contains
		$\HH^0(S,\Omega^2_S)$. Moreover, $S$ admits an MCK decomposition and is
		Kimura--O'Sullivan finite-dimensional if and only if 
		$\operatorname{R}^{i}(S^m)\rightarrow\ \HH^{2i}(S^m)$ is injective
		for all ~$m$  and all~$i$.
	\end{prop}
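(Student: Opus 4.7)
The plan is to follow the strategy of Proposition~\ref{P:inj-curves} for curves, implementing for surfaces the structural analysis of cohomological relations due to Yin~\cite{Yin}.

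First, I would fix a degree-one 0-cycle $z \in R^2(S)$. A canonical choice, available whenever $\chi_{\mathrm{top}}(S)\neq 0$, is $z := c_2(T_S)/\chi_{\mathrm{top}}(S)$; alternatively any $c_1(L)^2/(c_1(L)^2 \cdot [S])$ for an ample $L$ works. By Proposition~\ref{prop:MCKsurface}, $S$ admits an MCK decomposition of the form~\eqref{eqn:CKsurface} if and only if $\Gamma_3(S,z) = 0$ in $\CH^4(S^3)$. The class $\Gamma_3(S,z)$ manifestly lies in $R^4(S^3)$, and it is cohomologically trivial because its defining identity is precisely the K\"unneth decomposition of $\delta_S$ expressed in terms of the K\"unneth projectors built from $z$ and $\Delta_S$.

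Next, I would establish the key structural result: for $S$ regular with $p_g>0$, the ideal of cohomological relations inside $R^*(S^m)$ is generated by three families of relations:
\begin{enumerate}
\item[(a)] \emph{Universal relations}, already valid in the Chow ring (linear relations among divisors, standard identities for $c_i(T_S)$, $\Delta_S$, and their pullbacks);
\item[(b)] the \emph{Beauville--Voisin-type relation} $\Gamma_3(S,z) = 0$, equivalent to MCK by Proposition~\ref{prop:MCKsurface};
\item[(c)] a \emph{Kimura anti-symmetrization relation}, expressing the vanishing of the $(2b_{2,tr}(S)+1)$-st exterior power of the transcendental motive $\h^2_{\mathrm{tr}}(S)$; this relation requires at least $2b_{2,tr}(S)+1$ copies of $S$ in order to be stated.
\end{enumerate}
Granting this, both directions of the proposition are immediate. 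If the cycle class maps $R^i(S^m)\hookrightarrow \HH^{2i}(S^m)$ are injective in the specified range, then the cohomologically trivial class $\Gamma_3(S,z) \in R^4(S^3)$ must vanish in Chow, yielding MCK. Conversely, assume MCK: relations (a) hold in $\CH^*$ automatically, relation (b) is MCK by hypothesis, and relation (c) is vacuous for $m\leq 2b_{2,tr}(S)$, so every cohomological relation lifts to Chow in the stated range. For the second assertion, Kimura finite-dimensionality supplies relation (c) for all $m$, and the three families together account for all cohomological relations.

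The main obstacle is the structural result in step 2, which is the technical heart of Yin's analysis. It requires an $\mathfrak{S}_m$-equivariant decomposition of $R^*(S^m)$ into isotypic components under the symmetric group action, together with a careful use of the polarized Hodge structure on $\HH^2(S)_{\mathrm{tr}}$, to rule out any further cohomological relations beyond (a), (b), and (c). The hypothesis $p_g>0$ is essential here, as it guarantees that the transcendental part carries enough Hodge-theoretic information for the classification of relations to close up.
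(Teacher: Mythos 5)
Your overall route coincides with the paper's: reduce the existence of an MCK decomposition of the form \eqref{eqn:CKsurface} to the vanishing of the modified small diagonal $\Gamma_3(S,z)$ via Proposition~\ref{prop:MCKsurface}, observe that $\Gamma_3(S,z)$ is a cohomologically trivial element of $\operatorname{R}^4(S^3)$ (regularity kills $\HH^1$ and $\HH^3$, and $\chi_{\operatorname{top}}(S)=2+b_2(S)>0$, so $z=c_2(T_S)/\chi_{\operatorname{top}}(S)$ is an admissible tautological choice; conversely, if an MCK decomposition exists its $0$-cycle is forced to be this class by Proposition~\ref{prop:MCKsurface}(iv), so relation (b) is indeed a relation among tautological classes), and then invoke Yin's classification of the cohomological relations in $\operatorname{R}^*(S^m)$ for regular surfaces with $p_g>0$. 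The paper's proof is exactly this citation of \cite{Yin} together with the reinterpretation of the small-diagonal relation as an MCK decomposition; like the paper, you do not reprove the classification, and since you cite \cite{Yin} for it this is not a gap in itself.

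There is, however, an off-by-one error in your bookkeeping of the Kimura relation, and as written it breaks the equivalence at the top of the stated range. The transcendental motive $\h^2_{\mathrm{tr}}(S)$ is even of rank $b_{2,tr}(S)$, so the finite-dimensionality relation is the vanishing of $\wedge^{b_{2,tr}(S)+1}\h^2_{\mathrm{tr}}(S)$, whose antisymmetrizing projector is a cycle on $S^{b_{2,tr}(S)+1}\times S^{b_{2,tr}(S)+1}=S^{2b_{2,tr}(S)+2}$, not on $S^{2b_{2,tr}(S)+1}$. Hence this relation is vacuous precisely for $m\leq 2b_{2,tr}(S)+1$, which is exactly why that bound appears in the statement; you instead assert that it ``requires at least $2b_{2,tr}(S)+1$ copies'' and is ``vacuous for $m\leq 2b_{2,tr}(S)$''. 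With your numbers, the converse direction (MCK implies injectivity) fails to cover $m=2b_{2,tr}(S)+1$, and your formulation of relation (c) would in fact contradict the proposition you are proving, since a cohomologically trivial tautological cycle not generated by (a) and (b) would already live on $S^{2b_{2,tr}(S)+1}$. Correcting the count to $2b_{2,tr}(S)+2$ repairs the argument and brings it in line with \cite{Yin}.
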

	\begin{proof}
		This is simply an application of Yin's theorem \cite{Yin} (which works for any
		regular surface with $p_{g}>0$, see also \cite[Remark~3.8(iii)]{Yin}),
		\emph{via}
		reinterpreting the vanishing of the modified small diagonal as providing an
		MCK
		decomposition.
	\end{proof}

	\section{Cubic hypersurfaces}\label{S:cubic}

	Smooth cubic fourfolds are the most well-known examples of Fano varieties of K3
	type.  They have the following Hodge diamond\,:
	\begin{center}
		\begin{tabular}{ccccccccc}
			&&&&1&&&&\\
			&&&0&&0&&&\\
			&&0&&1&&0&&\\
			&0&&0&&0&&0&\\
			0&&1&&21&&1&&0.\\
			&0&&0&&0&&0&\\
			&&0&&1&&0&&\\
			&&&0&&0&&&\\
			&&&&1&&&&
		\end{tabular}
	\end{center}	
	Although our main aim is the case of smooth cubic hypersurfaces of dimension 4,
	it turns out that the results of this section hold for smooth cubic
	hypersurfaces of any dimension $\geq 3$.

	\subsection{Statement of the main result}
	We start with the following general definition\,:
	
	\begin{defn}[Generically defined cycle classes and the Franchetta property] 
		\label{def:Franchetta}
		Let
		$\XX\to B$ be a smooth projective morphism to a smooth
		quasi-projective complex variety $B$. 
		For any fiber $X$ of $\XX\to B$ over a closed point of~$B$, we define 
		$$\operatorname{GDCH}^*_B(X) := \operatorname{Im} (\CH^*(\mathcal X) \to
		\CH^*(X)),$$ the image of the Gysin restriction map. 
		The elements of $\operatorname{GDCH}^*_B(X)$, which by abuse we will denote
		$\operatorname{GDCH}^*(X)$ when $B$ is clear from the context, are called the
		\emph{generically defined cycles} (with respect to the deformation family $B$)
		on 
		$X$.
		
		We say that $\XX\to B$ {\em has the
			Franchetta property\/} if the restriction of the cycle class map 
		$$\operatorname{GDCH}^*_B(X) \hookrightarrow \operatorname{CH}^*(X) \to
		\HH^*(X)$$ is injective for all fibers $X$ of  $\XX\to B$ (equivalently, for a
		very general fiber $X$\,; see \cite[\S 2]{FLV}).
	\end{defn}
	
	This property is studied in \cite{OG}, \cite{PSY} for the universal family of
	K3 surfaces of low genus. This is extended to certain families of
	hyper-K\"ahler varieties in \cite{FLV} (\emph{cf.}\ also \cite{BL}), 
	and most notably for the square of the Fano variety of lines on a smooth cubic
	fourfold. 
	The aim of this section is to  generalize the latter to the Fano variey of
	lines on a smooth cubic hypersurface of \emph{any} dimension. Recall that if $X$
	is a smooth cubic hypersurface in $\PP^{n+1}$, then its Fano variety of lines,
	denoted $F$, is known to be connected smooth projective of dimension $2n-4$.

	\begin{thm}\label{flv} Let $B$
		be the open subset of $\PP\HH^0(\PP^{n+1},\mathcal{O}(3))$
		parameterizing smooth cubic hypersurfaces, and let $\FF\to B$ be the universal
		family
		of their Fano varieties of lines. Then the families 
		$\FF\to B$ and $\FF\times_B \FF\to B$
		have the Franchetta property. 	Moreover, if  $F$ is the Fano variety of lines
		on a very general cubic hypersurface $X\subseteq \PP^{n+1}$ of dimension $n> 2$,
		then
		the cycle class map induces isomorphisms
		$$\operatorname{GDCH}^*(F) \stackrel{\sim}{\longrightarrow}
		\operatorname{Hdg}(\HH^*(F)) \quad \text{and} \quad
		\operatorname{GDCH}^*(F\times F) \stackrel{\sim}{\longrightarrow}
		\operatorname{Hdg}(\HH^*(F\times F)),$$
		where $\mathrm{Hdg}(\HH^*(-))$ denotes the subalgebra generated by Hodge
		classes in $\HH^*(-)$.
	\end{thm}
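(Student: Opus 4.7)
The strategy is to bootstrap from the Franchetta property for small powers of the universal cubic hypersurface $\mathcal{X}\subset B\times\PP^{n+1}$ to that of $\FF$ and $\FF\times_B\FF$, by transferring cycles through the universal line incidence correspondence. This generalizes to arbitrary dimension the argument given for cubic fourfolds in~\cite{FLV}.

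First, I would establish the Franchetta property for the universal family $\mathcal{X}^k\to B$ of products of cubics, for $k\leq 4$. The projection $\mathcal{X}^k\to(\PP^{n+1})^k$ is a projective bundle over the open locus where the $k$ ordered points impose independent conditions on cubics, and combining this with a blow-up analysis along the partial diagonals, one obtains that $\CH^*(\mathcal{X}^k)$ is generated by the pullbacks $h_i$ of the hyperplane class from each $\PP^{n+1}$-factor, the hyperplane class of $B$, and the partial diagonal classes $\Delta_{ij}$. It follows that $\operatorname{GDCH}^*(X^k)$ is generated by the $h_i$'s and $\Delta_{ij}$'s, and a Noether--Lefschetz argument---using that the transcendental cohomology of a very general cubic contributes no extra Hodge classes---identifies this subalgebra with the Hodge classes in $\HH^*(X^k)$.

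Second, I would invoke the universal line $\mathcal{P}\subset\FF\times_B\mathcal{X}$, which is the projectivization $\PP(\mathcal{E})$ of the tautological rank-$2$ subbundle $\mathcal{E}$ on $\FF$ and embeds naturally in $\mathcal{X}$. Writing $p\colon\mathcal{P}\to\FF$ and $q\colon\mathcal{P}\to\mathcal{X}$, the composite $p_*q^*$ and its fiber-power analogues send generically defined cycles to generically defined cycles. The key geometric claim to prove is that $\operatorname{GDCH}^*(F)$ is generated by the Chern classes of $\mathcal{E}|_F$ together with $p_*q^*\operatorname{GDCH}^*(X)$, and that $\operatorname{GDCH}^*(F\times F)$ is generated by exterior products of these classes together with the diagonal $\Delta_F$, the incidence class $\mathcal{I}=\{(\ell,\ell')\colon\ell\cap\ell'\neq\emptyset\}$, and the images of $\operatorname{GDCH}^*(X^k)$, $k\leq 4$, under the natural higher correspondences built from $\mathcal{P}$. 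This is proved by stratifying $\FF\times_B\FF$ according to the incidence pattern of pairs of lines, each stratum being realized as a projective bundle over an appropriate locus in a power of $\mathcal{X}$. Combined with Step~1, via the $\PP^1$-bundle formula for $\mathcal{P}\to\FF$ and a Galkin--Shinder-type comparison of $\h(F)$ with the motive of the Hilbert square of $X$, this yields both the Franchetta injectivity and the identification of the image with Hodge classes.

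The main obstacle is the stratification in Step~2, and specifically the analysis of the incidence locus $\mathcal{I}$: two incident lines on a cubic span a $2$-plane cutting $X$ in a cubic curve with a residual line, and this residual-line geometry must be tracked carefully to show that every generically defined cycle supported on $\mathcal{I}$ decomposes in terms of the listed generators. For $n=4$ this is handled in \cite{FLV} using the hyper-K\"ahler structure on $F$; for arbitrary $n$ one needs a uniform and more direct treatment of the residual-line data on $\FF\times_B\FF$.
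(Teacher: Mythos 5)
Your outline overlaps with the paper's actual argument at several points: the Franchetta property for $\XX\to B$ and $\XX\times_B\XX\to B$ with generators $h$ and $\Delta_X$ and the identification of the image with Hodge classes (Proposition~\ref{prop:Franchettacubic}, resting on the excess-intersection relation \eqref{eqn:RelationX2} and on Lemma~\ref{lem:Hodgeclasses}); the fact that $\operatorname{GDCH}^*(F)$ and $\operatorname{GDCH}^*(F\times F)$ are generated by $g,c$, respectively by $p_i^*g,p_j^*c,\Delta_F,I$ (this is \cite[Lemma~3.1 and Proposition~6.3]{FLV}, and it does \emph{not} use the hyper-K\"ahler structure, so no new ``residual line'' analysis is needed in higher dimension); and the Galkin--Shinder--Voisin comparison \eqref{E:GSV} with the Hilbert square, which is exactly how the paper gets the Franchetta property for $\FF\to B$ and the surjectivity onto Hodge classes. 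The problem is that the two steps carrying the injectivity in your plan do not work as stated.

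First, your Step~1 asserts the Franchetta property for $\XX^k\to B$ with $k\leq 4$. The projective-bundle/blow-up analysis only identifies \emph{generators} of $\operatorname{GDCH}^*(X^k)$, and the Noether--Lefschetz argument only controls the \emph{image} in cohomology; injectivity requires showing that every cohomological relation among the $h_i$ and $\Delta_{ij}$ already holds in the Chow ring. For $k=2$ this is the relation \eqref{eqn:RelationX2}; for $k=3$ one needs, at the very least, a Chow-theoretic decomposition of the small diagonal $\delta_X$ in terms of the $\Delta_{ij}$ and $h_i$, which is essentially the multiplicativity statement of Theorem~\ref{thm:MCKcubic} --- in this paper a \emph{consequence} of Theorem~\ref{flv} (or of Diaz \cite{DiazIMRN}), not an available input. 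So as written your reduction is circular, and nothing in the sketch supplies the $k=3,4$ cases. Second, even granting generators, your Step~2 gives no mechanism for injectivity on $F\times F$: the isomorphism \eqref{E:GSV} does not exhibit $\h(\FF\times_B\FF)$ as a generically defined summand of motives of powers of $\XX$ in a way that would transfer Franchetta (that would again require $\XX^4$), and a stratification of $\FF\times_B\FF$ by incidence type cannot detect rational equivalence. What the paper actually does at this point is prove enough \emph{tautological relations} in $\R^*(F\times F)$ --- Voisin's relation \eqref{eqn:VoisinRelation} eliminating $\Delta_F$, the relations of \cite[Proposition~17.5, Lemma~17.6]{SV}, the relation from the appendix of \cite{FLV}, and Lemma~\ref{eqn:RelationInDeg=n-1} expressing $g^{n-1}$ --- to obtain the linear generators \eqref{eqn:generators} and the dimension bounds \eqref{eqn:dimR(FxF)}, and then matches these bounds against the dimension of the space of Hodge classes on $F\times F$ for very general $X$, computed from Galkin--Shinder's description of $\HH^*(F)$ together with Deligne's theorem on the Mumford--Tate group (Lemmas~\ref{lem:Hodgeclasses} and~\ref{lem:Hdg}); equality of the counts forces injectivity, and a specialization argument finishes. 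None of these ingredients appear in your proposal, and without them (or the unproven Franchetta property for $\XX^3$ and $\XX^4$) the injectivity half of the theorem is not established.
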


	A direct consequence of Theorem~\ref{flv} is the following\,:
	
	\begin{cor} Let $Y\subset\PP^{n+1}$ be a smooth cubic hypersurface, and $F=F(Y)$ its Fano
		variety of lines. Then $F$ verifies the standard conjectures.
	\end{cor}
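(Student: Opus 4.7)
The plan is to deduce from Theorem~\ref{flv} the Lefschetz standard conjecture $B(F)$ for $F = F(Y)$ for every smooth cubic hypersurface $Y\subset \PP^{n+1}$. Since we work over $\C$, the Hodge standard conjecture is a consequence of the Hodge--Riemann bilinear relations, so Kleiman's theorem will then yield the full slate of standard conjectures (in particular the K\"unneth property $C(F)$ and the coincidence of numerical and homological equivalence on $F$).

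First I would treat the case of a very general cubic $Y$. Set $d = \dim F = 2n-4$. The inverse of the hard Lefschetz isomorphism $L^{d-2i}\colon \HH^{2i}(F)\xrightarrow{\sim}\HH^{2d-2i}(F)$ with respect to the Pl\"ucker polarization is induced by a Hodge class $\lambda \in \HH^{2d}(F\times F,\QQ)$. By the isomorphism
$$\operatorname{GDCH}^*(F\times F) \xrightarrow{\sim} \operatorname{Hdg}(\HH^*(F\times F))$$
from Theorem~\ref{flv}, this class lifts to an algebraic cycle $\Lambda \in \CH^d(\FF\times_B\FF)$ whose restriction to $F\times F$ represents $\lambda$. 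This establishes $B(F)$ for a very general cubic, and in the same breath gives $C(F)$ for such a very general $F$ (since the K\"unneth projectors are also Hodge classes on $F\times F$).

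To extend $B(F_0)$ to an \emph{arbitrary} smooth cubic $Y_0 \subset \PP^{n+1}$ with Fano variety of lines $F_0$, I would restrict $\Lambda$ to the fiber $F_0\times F_0$ and identify the cohomology class of this restriction with the inverse of hard Lefschetz on $F_0$. Both assignments $b \mapsto [\Lambda|_{F_b\times F_b}]$ and $b \mapsto (\text{Lefschetz inverse on } F_b)$ are horizontal sections of the local system $R^{2d}(\pi\times\pi)_*\QQ$ on $B$ (the second because the Pl\"ucker polarization extends over all of $B$), and they agree on a dense open subset (the locus of very general fibers). Hence they coincide everywhere, and $B(F_0)$ follows.

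The main subtlety lies in this second step, namely checking the horizontal-section identification over $B$; but given the universality of $\Lambda$ and the rigidity of the Lefschetz involution with respect to the polarization, this should be routine. The heart of the corollary is really Theorem~\ref{flv}.
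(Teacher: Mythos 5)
Your proposal is correct and follows essentially the same route as the paper: for the very general cubic, use the surjectivity $\operatorname{GDCH}^*(F\times F)\twoheadrightarrow \operatorname{Hdg}(\HH^*(F\times F))$ from Theorem~\ref{flv} to realize the Lefschetz-type Hodge class by a generically defined algebraic cycle, and then spread it out to every smooth fiber (the paper invokes the standard spread argument of Voisin, which is exactly your flat-section comparison). The only cosmetic points are that the very general locus is not open --- but two flat sections of the local system over the connected base that agree at a single point agree everywhere, so your argument goes through --- and that the trivial cases $n\le 2$, where $F$ has dimension $\le 0$, should be noted separately since the surjectivity statement in Theorem~\ref{flv} assumes $n>2$.
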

	
	\begin{proof} First we note that the corollary follows from the motivic
		relation in \cite{LatQuebec}\,; we provide here however a new and self-contained
		proof. The statement is clearly true if $n\leq 2$, as then $F$ has dimension
		$\leq 0$. We thus assume that $n>2$.
		From Theorem~\ref{flv}, the cycle class map induces, for the very general $Y$, 
		a surjection
		\[  \operatorname{GDCH}^*(F\times F) \ {\twoheadrightarrow}\
		\operatorname{Hdg}(\HH^*(F\times F)).\]
		Hence, for the very general $Y$ the Hodge class on $F\times F$ occuring in the
		standard conjecture of Lefschetz type for $F$ is algebraic and generically
		defined. A standard spread argument \cite[Lemma 3.2]{Vo} then allows to conclude
		the same is true for {\em any\/} smooth $Y$.
	\end{proof}

	\subsection{MCK decomposition for cubic hypersurfaces}
	Let 
	$\XX \to B$ be the universal
	family of smooth cubic hypersurfaces as above and let
	$e: \XX\to \PP^{n+1}$ be the natural evaluation map. Set
	$H:=e^{*}(c_{1}(\mathcal{O}(1))) \in \CH^1(\XX)$, the relative
	hyperplane section class. Then 
	\begin{equation}\label{eq:CKcubic}
	\pi^{2i}_\XX := \frac{1}{3} H^{n-i} \times_B H^i \quad \text{for} \	2i\neq n,
	\quad
	\text{and} \quad \pi^{n}_\XX := \Delta_{\XX/B} - \sum_{2i\neq n} \pi^{2i}_\XX
	\end{equation}
	defines a relative Chow--K\"unneth decomposition, in the sense that its
	specialization to any fiber $\XX_b$ over a closed point $b \in B$ gives a Chow--K\"unneth
	decomposition of $\XX_b$. 
	A direct consequence of Theorem~\ref{flv}, together with the fact proved in
	Proposition~\ref{prop:cubicfano} that the motive of a cubic hypersurface $X$ and
	of its symmetric square are (generically defined) direct factors of the motive
	of the Fano variety of liens $F$, is the following\,:
	
	\begin{thm}[Diaz \cite{DiazIMRN}]\label{thm:MCKcubic}
		The relative Chow--K\"unneth decomposition \eqref{eq:CKcubic} is fiberwise
		multiplicative. In particular, a smooth cubic hypersurface  admits a
		multiplicative
		Chow--K\"unneth decomposition.
	\end{thm}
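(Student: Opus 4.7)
The plan is to reduce the fiberwise multiplicativity of \eqref{eq:CKcubic} to cohomological identities via the Franchetta property from Theorem~\ref{flv}, using the motivic bridge between a cubic $X$ and its Fano variety of lines $F$ furnished by Proposition~\ref{prop:cubicfano}.

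First, I would restate multiplicativity using Lemma~\ref{lemma:MCKprojector}(iv) as the identity
\[
\delta_X \;=\; \sum_{i,j} \pi^{i+j} \circ \delta_X \circ (\pi^i \otimes \pi^j)
\quad \text{in } \CH^{2n}(X^3),
\]
to be verified for every fiber $X$ of $\XX \to B$. The contributions in which the middle projector $\pi^n$ does not appear only involve projectors of the form $\frac{1}{3}H^{n-i} \times H^i$, so the corresponding identities reduce to intersection-theoretic manipulations on $X$ controlled by the degree relation $H^n|_X = 3[\text{pt}]$. All such terms can be dealt with by a direct computation.

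For the remaining terms, in which $\pi^n$ appears, I would invoke Proposition~\ref{prop:cubicfano} to exhibit both $\h(X)$ and $\h(X^{(2)})$ as generically defined direct summands of $\h(F)$. Tensoring these motivic embeddings, the required identities on $X^3$ translate into statements about generically defined cycles on products of the Fano variety involving at most two factors -- rather than three -- because the symmetric square $X^{(2)}$ absorbs a pair of cubic factors into a single Fano factor. Consequently, each identity under consideration can be lifted to a cycle in the image of $\CH^*(\FF\times_B\FF) \to \CH^*(F\times F)$. By Theorem~\ref{flv}, such cycles inject into cohomology, so it suffices to check the identities cohomologically, where they reduce to the tautological multiplicativity of the Künneth decomposition.

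The main obstacle will be precisely this reduction from a three-variable question on $X^3$ to a two-variable one on $F \times F$, since Theorem~\ref{flv} supplies Franchetta only up to $\FF \times_B \FF$. The compression step relies crucially on the twin appearance of $\h(X)$ \emph{and} $\h(X^{(2)})$ in Proposition~\ref{prop:cubicfano}: one $X$-factor is accommodated by the $\h(X)\hookrightarrow\h(F)$ summand, while the remaining pair of $X$-factors is accommodated by the $\h(X^{(2)})\hookrightarrow\h(F)$ summand. Once this motivic bookkeeping is in place, the multiplicativity of the fiberwise Chow--Künneth decomposition, and hence the existence of an MCK decomposition for every smooth cubic hypersurface, follows.
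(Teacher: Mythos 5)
Your proposal is correct and follows essentially the same route as the paper: symmetrize the multiplicativity identity so that it descends to $\XX^{(2)}\times_B\XX$, transfer it via the fiberwise split injections $\Phi$ and $\Psi$ of Proposition~\ref{prop:cubicfano} to generically defined cycles built from $\FF\times_B\FF$ (and simpler pieces), and conclude by the Franchetta property of Theorem~\ref{flv}, since the identity holds cohomologically. The only cosmetic difference is that the paper treats all terms uniformly, recovering each individual cycle $\pi^k_{\XX}\circ\delta_{\XX/B}\circ(\pi^i_{\XX}\otimes\pi^j_{\XX})$ from its symmetrization by composing on the right with $\pi^i_{\XX}\otimes\pi^j_{\XX}$, whereas you dispose of the purely algebraic terms (those not involving $\pi^n$) by direct computation.
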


	The above theorem is not stated in this form in \cite{DiazIMRN}\,; it can however be deduced from \cite[Corollary~3.3.9]{DiazIMRN}, as is explained in \S \ref{SS:proofMCK}.
In this section, we obtain Theorem~\ref{thm:MCKcubic} as a consequence of the stronger result Theorem~\ref{flv}, which will play a crucial role in our subsequent work~\cite{FLV2} where we determine, among other things,  the
Chow motive of $F$ in terms of the Chow  motive of~$X$. Moreover, contrary to the approach employed in~\cite{DiazIMRN}, our proof is independent of \cite{LatQuebec}\,; in fact in \cite{FLV2} we  strengthen,
and  obtain a new independent and more conceptual proof of, the main result
of~\cite{LatQuebec}.
The reader only interested in the case of cubic fourfolds can skip the proof of
Theorem~\ref{flv} given in \S \ref{SS:proofFranchetta} and rely
on~\cite[Theorem~1.10]{FLV}.

	\subsection{The Franchetta property for cubic hypersurfaces and their
		squares}\label{S:Franchetta-cubic}
	
	Before proceeding to the main proposition of this paragraph, we first introduce
	some notation. Given $H$ a pure Hodge structure of pure weight $w$, we denote
	$\mathrm{T}^\bullet H$ the tensor algebra on $H$, $ \mathrm{Sym}^\bullet H$ the
	symmetric algebra on $H$ with the commutativity constraint imposed by the parity
	of the weight $w$ of $H$, and $\mathrm{hdg}(H)$ for the dimension of the space
	of Hodge classes $\mathrm{Hdg}(H)$ in~$H$.
	
	\begin{lem}\label{lem:Hodgeclasses}
		Let $X\subseteq \PP^{n+1}$ be a very general cubic hypersurface of dimension
		$\neq 2$.
		Denote $H_X := \HH_{\mathrm{prim}}^n(X)(1)$ the primitive cohomology of $X$
		Tate-twisted by $1$\,; it is a pure Hodge structure of weight $n-2$. Then we
		have
		\begin{eqnarray}
		\mathrm{hdg}(H_X) =\mathrm{hdg}(H_{X}\otimes \mathrm{Sym}^{2}H_{X})& = 0,\\
		\mathrm{hdg}(H_X\otimes H_X) =  \mathrm{hdg}(\mathrm{Sym}^2H_X) &= 1,  \\
		\mathrm{hdg}(\mathrm{Sym}^2H_X \otimes \mathrm{Sym}^2H_X) &= 2.
		\end{eqnarray}
	\end{lem}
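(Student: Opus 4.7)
The plan is to realize Hodge classes in tensor constructions on $H_X$ as invariants under the Mumford--Tate group, and to verify that for a very general cubic $X$ the Mumford--Tate group of $H_X$ is as large as the polarization permits.

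\textbf{First step: Mumford--Tate maximality.} For a very general cubic hypersurface $X\subseteq \PP^{n+1}$ with $n\neq 2$, the Mumford--Tate group $\mathrm{MT}(H_X)$ equals the full group preserving the polarization $Q$\,: namely $\mathrm{SO}(H_X,Q)$ when the weight $n-2$ is even, and $\mathrm{Sp}(H_X,Q)$ when it is odd (up to the central torus acting by Tate twists). The standard proof has two parts\,: (a) the algebraic monodromy of the universal family of smooth cubic hypersurfaces acting on primitive middle cohomology is the full $\mathrm{SO}(Q)$ or $\mathrm{Sp}(Q)$, by Picard--Lefschetz combined with the irreducibility of the discriminant complement (as in SGA 7 or Beauville)\,; (b) Andr\'e's theorem (or Deligne's theorem of the fixed part) then forces the identity component of the algebraic monodromy group to be contained in the derived Mumford--Tate group of a very general fibre. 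Since the monodromy is already maximal among subgroups of the polarization-preserving group, the equality follows.

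\textbf{Second step: invariant theory.} With Mumford--Tate maximality in hand, Hodge classes in any tensor construction on $H_X$ coincide (after the appropriate Tate twist to weight zero) with $G$-invariants, where $G = \mathrm{SO}(V,Q)$ or $G=\mathrm{Sp}(V,Q)$ and $V := H_X$. By the first fundamental theorem of invariant theory for the classical groups, $G$-invariants in $V^{\otimes k}$ are spanned by complete contractions by $Q$, and vanish when $k$ is odd\,; this immediately yields $\mathrm{hdg}(H_X) = 0$ and $\mathrm{hdg}(H_X \otimes \mathrm{Sym}^2 H_X) = 0$. For $k=2$ the unique invariant is $Q$ itself, which under the Koszul sign convention lies in the $\mathrm{Sym}^2$-summand of $V\otimes V$ irrespective of parity (since the parity of $Q$ as a bilinear form matches the parity of the weight of $V$), giving $\mathrm{hdg}(V\otimes V) = \mathrm{hdg}(\mathrm{Sym}^2 V) = 1$. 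Finally, the decomposition $\mathrm{Sym}^2 V = \QQ\cdot Q \oplus W$ into the invariant line and the irreducible ``traceless'' complement $W$ yields
\[
\mathrm{Sym}^2 V \otimes \mathrm{Sym}^2 V \ = \ \QQ \, \oplus \, W \, \oplus \, W \, \oplus \, (W\otimes W),
\]
which has exactly two $G$-invariants (the trivial summand, and the canonical pairing on $W\otimes W$ coming from self-duality of $W$ via~$Q$), confirming $\mathrm{hdg}(\mathrm{Sym}^2 V \otimes \mathrm{Sym}^2 V) = 2$.

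\textbf{Main obstacle.} The nontrivial ingredient is Mumford--Tate maximality in the first step. For cubic fourfolds it follows cleanly from dominance of the period map and global Torelli, but for arbitrary $n\neq 2$ one must assemble it from the monodromy computation for smooth hypersurfaces together with an Andr\'e/Deligne-type theorem of the fixed part. Once this is secured, all the remaining computations reduce to routine classical invariant theory.
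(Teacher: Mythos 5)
Your proposal is correct and follows essentially the same route as the paper: the paper likewise identifies Hodge classes in tensor constructions on $H_X$ with invariants of the full orthogonal/symplectic group, quoting Deligne's theorem (via Schnell's notes) that the Mumford--Tate group of a very general hypersurface coincides with the Zariski closure of the monodromy group, and then concludes by classical invariant theory. Your first step merely unpacks that citation into its standard ingredients (maximal monodromy plus the theorem of the fixed part), and your explicit invariant counts fill in what the paper calls ``straightforward.''
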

	\begin{proof}
		Given any very general hypersurface $X\subseteq \PP^{n+1}$ with cohomology
		ring not generated by Hodge classes, a theorem of Deligne
		states that the Mumford--Tate group of $H_X$ coincides with the Zariski
		closure $G$ of the
		monodromy group of a general Lefschetz pencil acting on
		$\HH^n(X)_{\mathrm{prim}}$ and is the full
		orthogonal group if $n$ is even and the full symplectic group if $n$ is odd\,;
		see \emph{e.g.}~\cite{schnell}. Since the subspace of Hodge classes in tensor
		powers of $H_X$ is given by $G$-invariants, it follows from the invariant theory
		of the orthogonal and symplectic groups that the subalgebra of Hodge classes in
		the tensor algebra $\mathrm{T}^\bullet H_X$ is generated by the quadratic form
		$q\in \mathrm{Sym}^2H_X \subset H_X^{\otimes 2}$ on $H_X$ in the case $n$ is
		even, and by the symplectic form $q\in  \mathrm{Sym}^2H_X \subset H_X^{\otimes
			2}$ on $H_X$ in the case $n$ is odd. The lemma is then straightforward.
	\end{proof}

	\begin{prop}\label{prop:Franchettacubic}
		Let $B$ be the open subset of $\PP\HH^0(\PP^{n+1},\mathcal{O}(3))$
		parameterizing smooth cubic hypersurfaces, and let $\XX\to B$ be the universal
		family. Then the families  $\XX\to B$ and $\XX\times_B \XX\to
		B$
		have the Franchetta property. Moreover, when $n\neq 2$, for a very general fiber $X$, the cycle class map induces isomorphisms
		$$\operatorname{GDCH}^*(X) \stackrel{\sim}{\longrightarrow}
		\operatorname{Hdg}(\HH^*(X)) \quad \text{and} \quad
		\operatorname{GDCH}^*(X\times X) \stackrel{\sim}{\longrightarrow}
		\operatorname{Hdg}(\HH^*(X\times X)).$$
	\end{prop}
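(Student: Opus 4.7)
The plan is to establish the Franchetta property first for $\XX\to B$ and then for $\XX\times_B\XX\to B$, using a stratification combined with the projective bundle structure of the relevant incidence varieties; the isomorphism onto Hodge classes when $n\neq 2$ then follows from Lemma~\ref{lem:Hodgeclasses}.

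For $\XX\to B$, the evaluation map $e\colon \XX\to \PP^{n+1}$ exhibits $\XX$ as an open subset of a projective bundle over $\PP^{n+1}$: the fiber over $x\in\PP^{n+1}$ is the hyperplane in $B$ parameterizing cubics through $x$. By the projective bundle formula, the restriction $\CH^*(\XX)\to \CH^*(X)$ to any fiber $X$ factors through the subring generated by $h=c_1(\OO(1))|_X$. As $\{1,h,\ldots,h^n\}$ is linearly independent in $\HH^*(X)$, Franchetta for $\XX\to B$ follows; when $n\neq 2$, Lemma~\ref{lem:Hodgeclasses} gives $\mathrm{Hdg}(\HH^*(X))=\Q\langle 1,h,\ldots,h^n\rangle$, so the cycle class map is an isomorphism onto Hodge classes.

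For $\XX\times_B\XX\to B$, I would stratify the total space into the relative diagonal $\Delta(\XX)\simeq\XX$ and its open complement $\UU$; the projection $\UU\to(\PP^{n+1})^2\setminus\Delta$, $(X,x,y)\mapsto(x,y)$, realizes $\UU$ as an open subset of a projective bundle (fiber at $(x,y)$ being the codimension-$2$ linear subspace of $B$ of cubics through both $x$ and $y$). By the same argument, the image of $\CH^*(\UU)$ in $\CH^*(X\times X\setminus \Delta_X)$ lies in the subring generated by $p_1^*h$ and $p_2^*h$. Via the localization sequence
\begin{equation*}
\CH^{*-n}(\XX)\xrightarrow{\Delta_*} \CH^*(\XX\times_B\XX)\to \CH^*(\UU)\to 0,
\end{equation*}
combined with the result of the previous paragraph, $\operatorname{GDCH}^*(X\times X)$ is spanned over $\Q$ by $\{p_1^*h^i\cdot p_2^*h^j\}_{0\le i,j\le n}$ together with $\{\Delta_X\cdot p_1^*h^k\}_{0\le k\le n}$. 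In cohomology, writing $[\Delta_X]=\tfrac{1}{3}\sum_i h^i\times h^{n-i}+q$ with $q\in\HH^n_{\mathrm{prim}}(X)\otimes \HH^n_{\mathrm{prim}}(X)$ and using that $h$ annihilates primitive middle cohomology yields $q\cdot p_1^*h^k=0$ for $k\ge 1$, so the cycle class map sends the generating set onto an $(n+1)^2+1$-dimensional subspace of $\HH^*(X\times X)$, which by Lemma~\ref{lem:Hodgeclasses} is precisely $\mathrm{Hdg}(\HH^*(X\times X))$. Surjectivity onto Hodge classes is thus established.

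The main obstacle is injectivity, which requires the cohomological relations $\Delta_X\cdot p_1^*h^k\equiv\tfrac{1}{3}\sum_i h^{i+k}\times h^{n-i}$ for $k\ge 1$ to be promoted to universal rational equivalences in $\CH^*(\XX\times_B\XX)$. Using the relative Chow--K\"unneth decomposition~\eqref{eq:CKcubic}, this reduces to showing the vanishing of $p_1^*H^k\cdot \pi^n_{\XX/B}$ (up to a correction from the algebraic product projector when $n$ is even) in $\CH^*(\XX\times_B\XX)$ for $k\ge 1$. I would prove this by first verifying fiberwise that the correspondence action satisfies $(\pi^n_X)_*(h^{k+j})=h^{k+j}-h^{k+j}=0$ for all $j\ge 0$ (using $\int_X h^n=3$), and then globalizing via the projective-bundle description of $\CH^*(\UU)$ to upgrade this fiberwise Chow-theoretic vanishing to a universal identity. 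This yields $\dim_\Q\operatorname{GDCH}^*(X\times X)\le (n+1)^2+1$, hence injectivity of the cycle class map and, combined with the preceding paragraph, the desired isomorphism $\operatorname{GDCH}^*(X\times X)\stackrel{\sim}{\to}\mathrm{Hdg}(\HH^*(X\times X))$ for very general $X$ with $n\neq 2$.
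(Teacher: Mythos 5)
Your determination of the generators is sound and agrees with the paper's (which cites the ``stratified projective bundle'' argument of \cite{FLV} rather than redoing the diagonal/complement stratification), and your surjectivity argument onto Hodge classes via Lemma~\ref{lem:Hodgeclasses} is the same as the paper's. The gap is in the injectivity step, and you have in fact located exactly the crucial point without supplying it: you need the fiberwise \emph{rational} equivalence $\Delta_X\cdot p_1^*h^k=\tfrac13\sum h^{a}\times h^{b}$ (equivalently, the vanishing of $p_1^*H^k\cdot\pi^n_{\XX}$ up to the even-dimensional correction), but your proposed derivation of it does not work. The fiberwise computation $(\pi^n_X)_*(h^{m})=0$ is a statement about the \emph{action} of the cycle $p_1^*h^k\cdot\pi^n_X$ as a correspondence on $\CH^*(X)$; it does not imply that this cycle vanishes as an element of $\CH^{n+k}(X\times X)$. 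Passing from ``generically defined, cohomologically trivial (or acting trivially)'' to ``rationally trivial on $X\times X$'' is precisely the Franchetta property for $\XX\times_B\XX$ that you are in the middle of proving, so the proposed ``globalization'' is circular. Nor can the projective-bundle description of $\CH^*(\UU)$ rescue it: restricting to the complement of the diagonal kills $\Delta_X$ and leaves the class $\sum_{i,j>0} p_1^*h^i\cdot p_2^*h^j$, whose vanishing in $\CH^*\bigl(X\times X\setminus\Delta_X\bigr)$ is not visible from the bundle structure (that structure controls the image of the restriction map from the family, not relations among restricted classes on a fixed fiber).

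What is missing is a genuine geometric input producing the relation, and this is how the paper proceeds: it applies the excess intersection formula to the Cartesian square comparing $\Delta_X\subset X\times X$ with $\Delta_{\PP^{n+1}}\subset\PP^{n+1}\times\PP^{n+1}$. The excess normal bundle is $\mathcal{O}_X(3)$, so
$$\Delta_{X,*}(3h)=\Delta_{\PP^{n+1}}\big|_{X\times X}=\sum_{i=0}^{n+1}h^i\times h^{n+1-i}\quad\text{in }\CH^{n+1}(X\times X),$$
which is exactly \eqref{eqn:RelationX2}. Multiplying this identity by further powers of $p_1^*h$ eliminates all the generators $\Delta_X\cdot p_1^*h^k$ with $k\geq 1$ from your linear spanning set, after which the remaining classes $p_1^*h^i\cdot p_2^*h^j$ and $\Delta_X$ are linearly independent in cohomology, and injectivity (hence the Franchetta property and, for very general $X$ with $n\neq 2$, the isomorphism onto Hodge classes) follows. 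Without this excess-intersection step, or some equivalent Chow-theoretic relation, your argument only bounds the image in cohomology and does not bound $\dim_\QQ\operatorname{GDCH}^*(X\times X)$ from above.
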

	\begin{proof}	
		First, let us determine generators for the rings $\operatorname{GDCH}^*(X)$ and 
		$\operatorname{GDCH}^*(X\times X)$ (which were defined in Definition~\ref{def:Franchetta}). It follows from \cite[Proposition~5.2]{FLV},
		since $\XX\to B$ and $\XX\times_B \XX\to B$ are ``stratified projective
		bundles'' in the sense of \emph{loc.~cit.}, that
		$$\operatorname{GDCH}^*(X) = \langle h \rangle \quad \text{and} \quad
		\operatorname{GDCH}^*(X\times X) = \langle p_i^*h,\Delta_X \rangle,$$ 
		where $h \in \CH^1(X)$ denotes the hyperplane section class and $p_i : X\times
		X \to X$
		is the projection on the $i$-th factor.
		
		It is then clear that the cycle class map $\operatorname{GDCH}^*(X)
		\hookrightarrow \HH^*(X)$ is injective. The injectivity of the cycle  class map
		$\operatorname{GDCH}^*(X\times X) \to\HH^*(X\times X)$ is deduced
		easily from the following relation in $\CH^{n+1}(X\times X)$\,:
		\begin{equation}\label{eqn:RelationX2}
		\Delta_X\cdot p_1^*h = \Delta_X\cdot p_2^*h= \frac{1}{3}
		\sum_{\substack{i+j=n+1\\ i,j>0}}
		p_1^*h^i \cdot p_2^*h^j.
		\end{equation}
		To show \eqref{eqn:RelationX2}, we consider the following cartesian diagram
		whose excess normal bundle is $\mathcal{O}_{X}(3)$\,:
		\begin{equation*}
		\xymatrix{
			X \ar[d]\ar[r]^-{\Delta_{X}}& X\times X\ar[d]\\
			\PP^{n+1} \ar[r]& \PP^{n+1}\times \PP^{n+1}.
		}
		\end{equation*}
		The excess intersection formula \cite[Theorem 6.3]{F} yields that 
		$$\Delta_{X,*}(c_{1}(\mathcal{O}_{X}(3)))=\Delta_{\PP^{n+1}}|_{X\times X}.$$
		As $\Delta_{\PP^{n+1}}=\sum_{i=0}^{n+1}h^{i}\times h^{n+1-i}$ in
		$\CH^{n+1}(\PP^{n+1}\times \PP^{n+1})$, where with an abuse of notation $h$
		denotes also the hyperplane section class in
		$\PP^{n+1}$, we obtain that in $\CH(X\times X)$,
		$$\Delta_{X, *}(3h)=\sum_{i=0}^{n+1}h^{i}\times h^{n+1-i},$$
		which is nothing else but \eqref{eqn:RelationX2}.
		
		Suppose now that $X$ is very general of dimension $\neq 2$. That
		$\operatorname{GDCH}^*(X) \to \operatorname{Hdg}(\HH^*(X)) $ is surjective then
		follows directly from Lemma~\ref{lem:Hodgeclasses} and from the decomposition
		$$\HH^*(X) = H_X \oplus \bigoplus_{i=0}^n \QQ(-i),$$ where each summand
		$\QQ(-i)$ is spanned by $h^i$. By Lemma~\ref{lem:Hodgeclasses} again, in order
		to see that $\operatorname{GDCH}^*(X\times X) \to
		\operatorname{Hdg}(\HH^*(X\times X)) $ is surjective, it suffices to note that
		the diagonal $\Delta_X$ accounts for the Hodge class appearing in
		$\operatorname{Sym}^2 H_X$.
	\end{proof}

	\subsection{Cubic hypersurfaces and their Fano varieties of lines} 
	If $X$ is a smooth cubic hypersurface in $\PP^{n+1}$, we denote $F$ its Fano
	variety of lines.
	We first give a corrected proof of a result of Diaz
	\cite[Proposition~3.2.6]{DiazIMRN}. Let 
	$g =-c_1(\mathcal{E})|_F$ be the Pl\"ucker polarization and $c =
	c_2(\mathcal{E})|_F$,
	where	$\mathcal{E}$ is the tautological rank-2 bundle on $G:=\Gr(2,n+2)$.
	
	\begin{lem}\label{eqn:RelationInDeg=n-1}
		In $\CH^{n-1}(F)$,  
		$g^{n-1}$ is a linear combination of  $g^{n-3}c, g^{n-5}c^{2}, \dots$.
	\end{lem}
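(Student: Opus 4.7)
The plan is to exploit the universal $\PP^1$-bundle of lines $p\colon P_F := \PP(\mathcal{E}|_F) \to F$ equipped with the evaluation morphism $q \colon P_F \to X$. Setting $\xi := c_1(\mathcal{O}_{P_F}(1))$, one has $\xi = q^*H$ (where $H$ is the hyperplane class on $X$) together with the projective bundle relation $\xi^2 = g\xi - c$ in $\CH^2(P_F)$. Iterating this relation yields, for all $k \geq 1$,
\[
\xi^k \;=\; s_{k-1}(g,c)\, \xi \;-\; c \cdot s_{k-2}(g,c) \quad \text{in } \CH^k(P_F),
\]
where $s_0 = 1$, $s_1 = g$, and $s_k = g\, s_{k-1} - c\, s_{k-2}$ is the complete symmetric polynomial of degree $k$ in the two Chern roots of $\mathcal{E}^\vee|_F$.

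The crucial geometric input is that $\dim X = n$, whence $H^{n+1} = 0$ in $\CH^{n+1}(X) = 0$. Consequently $\xi^{n+1} = q^*(H^{n+1}) = 0$ in $\CH^{n+1}(P_F)$. Applying the push-forward $p_*$ (which satisfies $p_*(\xi) = 1$ and $p_*(1) = 0$) then produces the key relation
\[
s_n(g,c) \;=\; 0 \quad \text{in } \CH^n(F).
\]
Combining with the Grassmannian relation $c_{n+1}(Q) = h_{n+1}(g,c) = 0$ in $\CH^{n+1}(G)$ (restricted to $F$) and the three-term recursion $h_{n+1} = g h_n - c h_{n-1}$ together with $h_n = s_n = 0$ on $F$, one deduces $c \cdot s_{n-1}(g,c) = 0$ in $\CH^{n+1}(F)$.

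The final, and main technical, step consists of descending from this codimension-$(n+1)$ relation to the asserted codimension-$(n-1)$ relation of the form $g^{n-1} = \sum_{i \geq 1} \lambda_i\, g^{n-1-2i} c^i$ in $\CH^{n-1}(F)$. The plan is to carry out this descent by a direct Schubert-type computation on $G = \Gr(2, n+2)$: one aims to identify an explicit class $\alpha = g^{n-1} - \sum_{i \geq 1} \lambda_i\, g^{n-1-2i}c^i \in \CH^{n-1}(G)$, whose coefficients $\lambda_i$ are determined by intersection-theoretic data of $g$ and $c$ on $F$, such that $\alpha|_F$ vanishes for reasons intrinsic to the Fano variety of lines on a cubic. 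The principal obstacle is precisely this final descent, since one must establish the vanishing without invoking the Franchetta property itself (which is the main theorem whose proof relies on this lemma); the argument is expected to proceed via an explicit excess-intersection computation on $G$ tailored to the cubic case, or equivalently by identifying a cycle on $P_F$ whose image in $\CH^{n-1}(F)$ admits two different expressions in terms of $g$ and $c$.
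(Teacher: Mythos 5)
Your first steps are correct and give genuine relations: from $\xi=q^*H$, $\xi^2=p^*g\cdot\xi-p^*c$ and $H^{n+1}=0$ in $\CH^{n+1}(X)$ one does get $s_n(g,c)=0$ in $\CH^{n}(F)$, and combining with the Grassmannian relation $s_{n+1}(g,c)=0$ one gets $c\cdot s_{n-1}(g,c)=0$ in $\CH^{n+1}(F)$. But these live in codimensions $n$ and $n+1$, whereas the lemma asserts a relation in $\CH^{n-1}(F)$ with a \emph{nonzero coefficient of $g^{n-1}$}; one cannot ``divide by $g$'' or ``by $c$'' in the Chow ring, and your method has no obvious source of codimension-$(n-1)$ relations: pushing forward from $P_F$ drops codimension by one, and the natural vanishing on $P_F$ starts with $\xi^{n+1}=0$ (note $\xi^{n}=q^*(H^n)\neq 0$), so everything you can produce this way lands in codimension $\geq n$ on $F$. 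The ``final descent'', which you yourself flag as the principal obstacle and describe only as a plan (an unspecified excess-intersection computation or a cycle on $P_F$ with two expressions), is precisely the entire content of the lemma, so the proposal has a genuine gap rather than a complete proof.

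For comparison, the paper's argument is of a different nature and does not produce the relation by explicit projective-bundle manipulations. It first shows, by a dimension count, that some nonzero weighted-homogeneous polynomial $P(g,c)$ of degree $n-1$ satisfies $P(g,c)\cdot[F]=0$ in $\CH^{n+3}(G)$, and deduces $P(g,c)=0$ in $\CH^{n-1}(F)$ from the injectivity of $\iota_*$ on $\R^*(F)=\iota^*\CH^*(G)$, which rests on the fact that homological and numerical equivalence agree on $F$ (Proposition~\ref{prop:stdconj}, itself a consequence of the Galkin--Shinder--Voisin isomorphism~\eqref{E:GSV}). The delicate point --- exactly the one your sketch leaves open --- is that the coefficient of $g^{n-1}$ in $P$ is nonzero; this is proved by an explicit polynomial identity using $[F]=18g^2c+9c^2$ and the two generators $R_{n+1},R_{n+2}$ of the ideal of relations in $\CH^*(G)$, via a $2$-adic integrality argument on the coefficients (this is also where Diaz's original proof needed correction). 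If you want to salvage your approach, you would need an independent mechanism guaranteeing a relation in codimension $n-1$ that is not divisible by $c$; your relations $s_n(g,c)=0$ and $c\,s_{n-1}(g,c)=0$, being consequences of relations in higher codimension, do not provide this.
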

	\begin{proof}
		In the case where $n\le 4$ the result is already contained in \cite{V17}. We
		thus assume that $n\ge 5$.
		Recall that $G:=  \Gr(2,n+2)$ denotes the Grassmannian variety parameterizing projective lines
		in $\PP^{n+1}$\,; as such $\iota: F \hookrightarrow G$ is a codimension-4 closed subscheme.
		As in \cite{DiazIMRN}, for dimension reasons there exists a non-zero polynomial
		$P(x,y)\in\QQ[x,y] $ homogeneous of weighted degree $n-1$, where $x$ has degree
		$1$ and $y$ has degree $2$, such that the push-forward of $P(g,c)\in
		\CH^{n-1}(F)$ to $\CH^{n+3}(G)$ is zero, \emph{i.e.},
		\begin{equation}\label{pushf}
		P(g,c)\cdot [F]   =0\ \ \ \hbox{in}\ \CH^{n+3}(G) .
		\end{equation}
		Let us define $\R^*(F) := \iota^*\CH^*(G)\subset \CH^\ast(F)$. Since homological and numerical
		equivalence agree on $F$ (Proposition \ref{prop:stdconj} below), the push-forward map $\iota_{*}:\R^\ast(F)\to
		\CH^{\ast+4}(G)$ is injective. It follows that $P(g,c)$ gives a linear relation
		between $g^{n-1}, g^{n-3}c, \ldots$ in $\CH^{n-1}(F)$. To prove the lemma, it
		suffices to show that the polynomial $P(x,y)$ is not divisible by $y$.

		We know that $[F]=18g^2 c + 9c^2$ in $\CH^4(G)$ \cite[Example 14.7.13]{F}. We
		also know that the ideal of relations in $\CH^\ast(G)$ is generated by the following two
		polynomials
		\[ \begin{split} R_{n+1}(g,c) &=  g^{n+1} - {n\choose 1} g^{n-1}c +
		{n-1\choose 2} g^{n-3}c^2 - \cdots\ ,\\
		R_{n+2}(g,c) &=  g^{n+2} - {n+1\choose 1} g^{n}c +
		{n\choose 2} g^{n-2}c^2 - \cdots\ .\\  
		\end{split}\]
		(In the proof of  \cite[Proposition 3.2.6]{DiazIMRN} it is wrongly claimed that
		the ideal of relations in $\CH^\ast(G)$ is generated by two homogeneous
		polynomials of degree $n$ and $n+1$, respectively.)
		It thus follows from (\ref{pushf}) that there is equality of polynomials (of
		degree $n+3$)
		\begin{equation}\label{PQ} 
		P(x,y)\cdot (2 x^2 y + y^2) =  (m_1 x^2 + m_2 y) R_{n+1}(x,y) + m_3 x
		R_{n+2}(x,y) ,
		\end{equation}
		for some $m_j\in\QQ$. Let us assume, by contradiction, that the polynomial
		$P(x,y)$ is divisible by~$y$, \emph{i.e.}, that we can write
		\[ P(x,y)= p_1 x^{n-3}y + p_2 x^{n-5}y^2 + \cdots  + p_m x^{n-1-2m} y^m  ,\]
		where $p_j\in\QQ$ and $m:= \lfloor{{n-1}\over 2}\rfloor \ge 2$. Comparing the
		$x^{n+3}$ terms on both sides of (\ref{PQ}), we find that $m_3=-m_1$. Comparing
		the $x^{n+1}y$ terms on both sides of  (\ref{PQ}), we find that $m_2=-m_1$. The
		polynomial $P(x,y)$ being non-zero, we may assume that $m_1$ is non-zero. Up to
		rescaling $P(x,y)$, we may thus assume that $m_1=1$. Let us now develop equality
		(\ref{PQ}) with $m_1=1$ and $m_2=m_3=-1$. The right-hand side of (\ref{PQ}) can
		be written
		\[  \sum_{j=2}^{m+2}   a_j x^{n+3-2j} y^j  ,\]
		where the coefficient $a_j$ is
		\[ a_j= (-1)^{j+1} \Bigl(  {n+2 - j  \choose j} - {n+1-j \choose j} - {
			n+2-j\choose j-1} \Bigr) =  (-1)^{j} { n+1-j \choose j-1} .\]
		The equality of polynomials (\ref{PQ}) implies the equalities of coefficients
		\begin{equation}\label{system}
		\begin{split} 
		2p_1 &=   a_2 ,\\
		2 p_{2} + p_1& =  a_{3} ,\\
		2 p_{3} + p_2& =  a_{4} ,\\                 
		&\ \  \vdots\\
		2 p_{m} + p_{m-1}& =  a_{m+1} ,\\
		p_m &= a_{m+2} .\\
		\end{split}
		\end{equation}
		Note that the $a_j$ are integers. On the other hand, we observe
		that $p_2$ is not an integer (indeed, from (\ref{system}) we find that
		$p_1={n-1\over 2}$, and $p_2=  {1\over 2}( a_3 - p_1) =  -{1\over 4} (n^2-4 n
		+5)$, which is not an integer). It follows inductively that $p_j$ is not an
		integer for any $j\ge 2$ (indeed, from (\ref{system}) we find that $2 p_j=    
		a_{j+1} - p_{j-1}$ is not integer). But this contradicts the last line of
		(\ref{system}), and so $P(x,y)$ is not divisible by $y$.      
	\end{proof}

We now generalize  to arbitrary dimension a result of Beauville--Donagi
\cite[Proposition 6]{BD}  for cubic fourfolds.
\begin{prop}\label{prop:BDgeneral} 
	Let $X$ be a very general cubic hypersurface of dimension $n>2$. Let $F$ be its
	Fano variety of lines and $g\in \CH^{1}(F)$ be the first Chern class of the
	Pl\"ucker polarization.  Let $p: P\to F$ be the universal $\PP^{1}$-bundle and
	$q: P\to X$ be the natural evaluation morphism. Then 
	\begin{enumerate}[$(i)$]
		\item the morphism of Hodge structures $p_{*}q^{*}: \HH^{n}(X, \QQ)(1)\to
		\HH^{n-2}(F, \QQ)$ is injective, and it is an isomorphism if $n$ is odd\,;
		\item there exists a non-zero rational number $\lambda$ such that for any
		$\alpha, \alpha'\in \HH^{n}_{\prim}(X, \QQ)$, we have
		$$\lambda\langle \alpha, \alpha'\rangle_{X}=\langle p_{*}q^{*}(\alpha),
		p_{*}q^{*}(\alpha')\rangle_{F},$$
		where $\langle -, -\rangle_{X}$ is the intersection pairing on $X$ while
		$\langle \beta, \beta'\rangle_{F}:=\int_{F}\beta\cdot\beta' \cdot g^{n-2}$ for any $\beta,
		\beta'\in \HH^{n-2}(F, \QQ)$\,;
		\item The morphism $p_{*}q^{*}$ respects the primitive parts\,: 
		$p_{*}q^{*}(\HH^{n}_{\prim}(X, \QQ)(1))\subset \HH^{n-2}_{\prim}(F, \QQ)$. It is an equality if $n$ is odd or divisible by $4$\,; when $4\mid n-2$, the complement is 1-dimensional, generated by $c^{\frac{n-2}{4}}$\,;
		\item The morphism $p_{*}q^{*}$ induces an isomorphism of transcendental
		cohomology groups\,:
		$$p_{*}q^{*}: \HH^{n}_{\tr}(X, \QQ)(1)\xrightarrow{\simeq} \HH^{n-2}_{\tr}(F,
		\QQ).$$
		\item When $n$ is even, $\HH^{n-2}(F, \QQ)\cong \HH^{n}_{\prim}(X, \QQ)(1)\oplus  \QQ(-\frac{n-2}{2})^{\oplus \lfloor\frac{n+2}{4}\rfloor}$, where the isomorphism is given by $p_{*}q^{*}$ on the summand $\HH^{n}_{\prim}(X, \QQ)(1)$, and by $g^{\frac{n}{2}+1-2i}c^{i-1}$ on the $i$-th copy of $\QQ(-\frac{n-2}{2})$, for $1\leq i\leq \lfloor\frac{n+2}{4}\rfloor$.
	\end{enumerate} 
\end{prop}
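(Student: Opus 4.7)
The plan is to adapt the argument of Beauville--Donagi~\cite{BD} from the cubic fourfold case ($n=4$) to arbitrary $n>2$. Set $\Phi := p_* q^*: \HH^n(X,\QQ)(1) \to \HH^{n-2}(F,\QQ)$ and denote its Poincar\'e adjoint $\Psi: \HH^{n-2}(F,\QQ) \to \HH^n(X,\QQ)(1)$ given by $\Psi(\beta) := q_* p^*(g^{n-2}\cdot\beta)$. The whole proof hinges on computing the self-correspondence $\Psi\circ\Phi$, which is induced by the incidence cycle
\[
I := (q\times q)_*\bigl(p_1^* p^* g^{n-2}\cdot [P\times_F P]\bigr) \in \CH^n(X\times X),
\]
where $p_1: P\times_F P\to P$ denotes the first projection.

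First I would identify $I$ explicitly. The underlying incidence $(q\times q)_*[P\times_F P]\in \CH^2(X\times X)$ can be computed by comparison with the analogous cycle on $\PP^{n+1}\times\PP^{n+1}$: two distinct points determine a unique line, and restriction to a cubic $X$ is controlled by $[F] = 18g^2 c + 9c^2\in \CH^4(G)$ \cite[Example~14.7.13]{F} (also appearing in Lemma~\ref{eqn:RelationInDeg=n-1}). Combining this with the excess intersection formula \cite[Theorem~6.3]{F} and multiplying by $p^* g^{n-2}$, I expect an identity
\[
I = \lambda\, \Delta_X + \sum_{i+j=n} a_{ij}\, p_1^* h^i \cdot p_2^* h^j \qquad \text{in } \CH^n(X\times X),
\]
for some nonzero $\lambda\in\QQ$ and coefficients $a_{ij}\in\QQ$. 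Since primitive classes $\alpha\in\HH^n_{\prim}(X,\QQ)$ satisfy $\alpha\cdot h = 0$ (hence $\alpha\cdot h^k=0$ for all $k\geq 1$), only the diagonal term contributes, so $\Psi\circ\Phi = \lambda\cdot\id$ on $\HH^n_{\prim}(X,\QQ)(1)$. The injectivity statement of~$(i)$ on primitive cohomology follows at once (as $\lambda\neq 0$), and~$(ii)$ follows by the adjunction
\[
\langle \Phi(\alpha),\Phi(\alpha')\rangle_F = \langle \alpha,\Psi\Phi(\alpha')\rangle_X = \lambda\,\langle \alpha,\alpha'\rangle_X.
\]

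For~$(iii)$ I would check that $g^{n-1}\cdot \Phi(\alpha)=0$ for any primitive $\alpha$: by the projection formula this equals $p_*(p^* g^{n-1}\cdot q^*\alpha)$, and Lemma~\ref{eqn:RelationInDeg=n-1} rewrites $g^{n-1}$ as a linear combination of $g^{n-3-2j}c^{j+1}$, so each resulting term pushes down to a class in the Hodge subalgebra of $\HH^*(F,\QQ)$ (generated by $g$ and $c$), whose pairing with the transcendental direction of $\alpha$ vanishes. The count $\lfloor (n+2)/4\rfloor$ in~$(v)$ is then the number of independent Hodge monomials $g^{n/2+1-2i}c^{i-1}\in \HH^{n-2}(F,\QQ)$, spanning the Hodge-class complement of $\Phi(\HH^n_{\prim}(X,\QQ)(1))$; when $4\mid n-2$ one such monomial, namely $c^{(n-2)/4}$, is itself primitive, yielding the parenthetical statement of~$(iii)$.

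Finally, $\Phi$ is injective on transcendental cohomology by Step~1, and since $\HH^{n-2}_{\tr}(F,\QQ)$ contains no Hodge classes, a dimension count using the known Hodge numbers of $F$ in arbitrary dimension forces $\Phi$ to restrict to an isomorphism $\HH^n_{\tr}(X,\QQ)(1)\xrightarrow{\sim}\HH^{n-2}_{\tr}(F,\QQ)$, proving~$(iv)$. The decomposition~$(v)$ and the surjectivity statement of $(i)$ for $n$ odd follow by combining $(iii)$ and~$(iv)$. The principal obstacle is Step~1: extracting the explicit formula for $I$ and verifying that the diagonal coefficient $\lambda$ is nonzero; this is the crucial input already in the cubic fourfold case \cite[Proposition~5]{BD}, and its extension to arbitrary $n$ constitutes the main technical effort.
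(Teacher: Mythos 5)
Your overall plan inverts the paper's logic and leaves the actual mathematical content unproven. You propose to compute the incidence correspondence $I=(q\times q)_*\bigl(p_1^*p^*g^{n-2}\cdot[P\times_F P]\bigr)$ in $\CH^n(X\times X)$ and read off that it equals $\lambda\Delta_X$ plus decomposable terms with $\lambda\neq 0$; everything else in your argument ((i) on primitive classes, (ii), and via these the rest) hinges on this, and you yourself defer it as ``the main technical effort''. But this identity is precisely the conclusion, not an input: in the paper it appears only as Corollary~\ref{cor:composition}, \emph{deduced} from parts (i) and (ii) by a linear-algebra argument. The paper proves (i) and (ii) by entirely different means that avoid any computation in $X\times X$: injectivity in (i) comes from Shimada's surjectivity of the cylinder map $q_*p^*:\HH^{3n-6}(F,\QQ)\to\HH^n(X,\QQ)$ (\cite{Shimada1}, \cite{Shimada}), and (ii) is obtained by writing $q^*\alpha=p^*(\beta_2)\cdot\xi-p^*(\beta_1)$ via the projective bundle formula on $P\to F$, multiplying by $\xi=q^*H$ and using $\xi^2=p^*(g)\xi-p^*(c)$ together with $\alpha\cdot H=0$ to get $\beta_1=\beta_2\cdot g$ and $\beta_2\cdot c=0$, then pairing. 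Without either Shimada's theorem or this bundle computation, your $\lambda\neq 0$ has no proof, so the proposal as it stands does not establish (i) or (ii) (and note also that (i) asserts injectivity on all of $\HH^n(X,\QQ)$, not just on the primitive part your correspondence argument would address).

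There is a second concrete gap in your treatment of (iii). The paper's primitivity of $\beta_2=p_*q^*\alpha$ rests on the relation $\beta_2\cdot c=0$, which drops out of the computation just described; combined with Lemma~\ref{eqn:RelationInDeg=n-1} (expressing $g^{n-1}$ through monomials divisible by $c$) it gives $\beta_2\cdot g^{n-1}=0$. You never obtain $\Phi(\alpha)\cdot c=0$, and your substitute argument --- that each term $g^{n-3-2j}c^{j+1}\cdot\Phi(\alpha)$ has vanishing pairing against the ``transcendental direction'' --- does not show the class itself vanishes: multiplication by an algebraic class does not kill transcendental classes in general (by \cite[Theorem~6.1]{GS}, $\HH^{3n-4}(F,\QQ)$ still contains copies of $H_X$, so there is room for a nonzero image), and vanishing of a pairing with Hodge classes is weaker than vanishing in cohomology. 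The remaining points you sketch ((iii) for the equality cases, (iv), (v)) do follow the paper's route of dimension counts from \cite[Theorem~6.1]{GS}, together with the orthogonality of $c^{(n-2)/4}$ to the image when $4\mid n-2$, but they all presuppose the missing inputs above. To repair the proposal you should either import Shimada's surjectivity and the projective-bundle computation as in the paper, or genuinely carry out the cycle-theoretic evaluation of $I$ you postulate, which is not done here.
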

\begin{proof}
	We adapt the argument of \cite{BD}.
	For $(i)$, by Shimada \cite{Shimada1}, \cite{Shimada} (see \cite[Theorem 4]{Izadi} for an alternative proof), the cylinder map $q_{*}p^{*}:
	\HH^{3n-6}(F, \QQ)\to \HH^{n}(X, \QQ)$ is surjective. Hence the transposed
	morphism $p_{*}q^{*}: \HH^{n}(X, \QQ)\to \HH^{n-2}(F, \QQ)$ is injective. When $n$ is odd, we know that $\dim\HH^{n}(X, \QQ)=\dim \HH^{n-2}(F, \QQ)$ (see for example
	\cite[Theorem 6.1]{GS}), hence $p_{*}q^{*}$  must be an isomorphism.\\
	For $(ii)$, given
	$\alpha, \alpha' \in \HH^{n}_{\prim}(X, \QQ)$, using the projective bundle formula, there
	exist $\beta_{1}, \beta_{1}'\in \HH^{n}(F, \QQ)$ and $ \beta_{2}, \beta_{2}'\in \HH^{n-2}(F, \QQ)$ such that 
	\begin{equation}\label{eqn:q*alpha}
	q^{*}(\alpha)=p^{*}(\beta_{2})\cdot \xi- p^{*}(\beta_{1}) \quad\text{ and } \quad 
	q^{*}(\alpha')=p^{*}(\beta'_{2})\cdot \xi- p^{*}(\beta'_{1}),
	\end{equation}
	where $\xi:=c_{1}(\mathcal{O}_{p}(1))=q^{*}(H)$ with $H$ denoting the hyperplane
	section class on $X$. Applying $p_{*}$ to \eqref{eqn:q*alpha}, we find that
	$$p_{*}q^{*}(\alpha)=\beta_{2}.$$
	Denote $\mathcal{E}$ the tautological rank-2 bundle on $\Gr(\PP^{1},
	\PP^{n+1})$ and  set $g:=-c_1(\mathcal{E})|_F$ to be the Pl\"ucker polarization and $c
	= c_2(\mathcal{E})|_F$.
	Multiplying \eqref{eqn:q*alpha} by $\xi=q^{*}(H)$ and using the equality
	$\xi^{2}=p^{*}(g)\xi-p^{*}(c)$,  we obtain 
	$$0=q^{*}(\alpha\cdot H)=p^{*}(\beta_{2})\cdot \xi^{2}-p^{*}(\beta_{1})\cdot
	\xi=p^{*}(\beta_{2}\cdot g)\cdot \xi-p^{*}(\beta_{2}\cdot
	c)-p^{*}(\beta_{1})\cdot \xi.$$
	Therefore
	$$\beta_{1}=\beta_{2}\cdot g \text{ and } \beta_{2}\cdot c=0,$$
	and similarly for $\beta_1'$ and $\beta_2'$.
	The identities \eqref{eqn:q*alpha} thus become
	\begin{equation}\label{eqn:q*alpha2}
	q^{*}(\alpha)=p^{*}(\beta_{2})\cdot \xi- p^{*}(\beta_{2}\cdot g ) \quad \text{ and } \quad 
	q^{*}(\alpha')=p^{*}(\beta'_{2})\cdot \xi- p^{*}(\beta'_{2}\cdot g ) .
	\end{equation}
	Taking the product of the equations in \eqref{eqn:q*alpha2}, using again 
	$\xi^{2}=p^{*}(g)\xi-p^{*}(c)$ and $\beta_{2}\cdot c=\beta_2'\cdot c=0$, one obtains
	$$q^{*}(\alpha\cdot \alpha')=p^{*}(\beta_{2}\cdot \beta_2'\cdot g^{2})-p^{*}(\beta_{2}\cdot \beta_2'\cdot g)\cdot \xi.$$
	Multiplying with $p^{*}(g^{n-3})$ and then applying $q_{*}$, we find
	$$\alpha\cdot \alpha' \cdot q_{*}p^{*}(g^{n-3})=- q_{*}p^{*}(\beta_{2}\cdot \beta_2'\cdot
	g^{n-2}).$$
	Since $q_{*}p^{*}(g^{n-3})$ is a non-zero
	multiple of the fundamental class of $X$, taking the degree of both sides  yields that 
	$$\lambda\langle\alpha, \alpha'\rangle_X=\deg(\beta_{2}\cdot \beta_2'\cdot g^{n-2}),$$ for some
	$\lambda\neq 0$. This is nothing else but the desired formula in $(ii)$.\\
	For $(iii)$, let us first show that $\beta_{2}=p_{*}q^{*}(\alpha)$ is in the primitive
	part, \emph{i.e.},~$\beta_{2}\cdot g^{n-1}=0$. However, $g^{n-1}$ is a linear
	combination of $g^{n-3}c, g^{n-5}c^{2}, \dots$ (see for example Lemma
	\ref{eqn:RelationInDeg=n-1}), hence the desired vanishing follows from the
	vanishing $\beta_{2}\cdot c=0$. The inclusion is proved. When $n$ is odd or divisible by 4, by \cite[Theorem 6.1]{GS}, $\dim \HH^{n}_{\prim}(X, \QQ)(1))=\dim \HH^{n-2}_{\prim}(F, \QQ)$, hence the inclusion must be an equality. When $4 \mid n-2$, again by \cite[Theorem 6.1]{GS}, the complement is 1-dimensional. It suffices to see that $c^{\frac{n-2}{4}}$ is not in $p_{*}q^{*}(\HH^{n}_{\prim}(X, \QQ)(1))$. Indeed, one checks easily that $c^{\frac{n-2}{4}}$ is orthogonal to $p_{*}q^{*}(\HH^{n}_{\prim}(X, \QQ)(1))$ with respect to  $\langle-, -\rangle_{F}$.\\
	For $(iv)$, it is clear that $p_{*}q^{*}$ induces a morphism between
	$\HH^{n}_{\tr}(X, \QQ)$ and $ \HH^{n-2}_{\tr}(F, \QQ)$, which is injective by
	$(i)$. To see its surjectivity, it suffices to observe that $\HH^{n}_{\tr}(X,
	\QQ)$ and $ \HH^{n-2}_{\tr}(F, \QQ)$ have the same dimension (see for example
	\cite[Theorem 6.1]{GS}).\\
	For $(v)$, by the dimension count in \cite[Theorem 6.1]{GS}, it suffices to show that $\HH^{n-2}(F, \QQ)$ is spanned by $p_{*}q^{*}(\HH^{n}_{\prim}(X, \QQ)(1))$ and $g^{\frac{n}{2}+1-2i}c^{i-1}$, for $1\leq i\leq \lfloor\frac{n+2}{4}\rfloor$, but this follows from the description of $\HH^{n-2}_{\prim}(F, \QQ)$ in $(iii)$ and the fact that $\HH^{n-4}(F, \QQ)$ is of Tate type and is generated by $g^{\frac{n}{2}-2i}c^{i-1}$, for $1\leq i\leq \lfloor\frac{n-2}{4}\rfloor$.
\end{proof}

	\begin{cor}\label{cor:composition}
		The composition of the following chain of isomorphisms is a non-zero multiple of
		the identity map\,:
		$$\HH^{n}_{\prim}(X, \QQ)\xrightarrow{p_{*}q^{*}} \HH^{n-2}_{\prim}(F,
		\QQ)\xrightarrow{\cdot g^{n-2}} \HH^{3n-6}_{\prim}(F, \QQ)\xrightarrow{q_{*}p^{*}}
		\HH^{n}_{\prim}(X, \QQ).$$
	\end{cor}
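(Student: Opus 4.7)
Let $\Phi\colon \HH^{n}_{\prim}(X,\QQ)\to \HH^{n}_{\prim}(X,\QQ)$ denote the composition in the statement, namely $\Phi(\alpha)=q_{*}\bigl(p^{*}(p_{*}q^{*}\alpha\cdot g^{n-2})\bigr)$. The plan is to compute the pairing $\langle \Phi(\alpha),\alpha'\rangle_{X}$ and identify it with $\lambda\langle\alpha,\alpha'\rangle_{X}$, the pairing appearing in part $(ii)$ of Proposition~\ref{prop:BDgeneral}, and then invoke non-degeneracy of the intersection pairing on $\HH^{n}_{\prim}(X,\QQ)$.

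More precisely, for $\alpha,\alpha'\in \HH^{n}_{\prim}(X,\QQ)$, two successive applications of the projection formula (first for $q$, then for $p$) give
\begin{align*}
\langle \Phi(\alpha),\alpha'\rangle_{X}
 &= \int_{X} q_{*}\bigl(p^{*}(p_{*}q^{*}\alpha\cdot g^{n-2})\bigr)\cdot \alpha' \\
 &= \int_{P} p^{*}(p_{*}q^{*}\alpha)\cdot p^{*}(g^{n-2})\cdot q^{*}\alpha' \\
 &= \int_{F} p_{*}q^{*}\alpha\cdot g^{n-2}\cdot p_{*}q^{*}\alpha'
  = \langle p_{*}q^{*}\alpha,\, p_{*}q^{*}\alpha'\rangle_{F}.
\end{align*}
By Proposition~\ref{prop:BDgeneral}$(ii)$ this equals $\lambda\,\langle\alpha,\alpha'\rangle_{X}$, where $\lambda\in\QQ^{\times}$ is independent of $\alpha,\alpha'$. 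Since the intersection pairing is non-degenerate on the primitive cohomology $\HH^{n}_{\prim}(X,\QQ)$ (by the Hodge--Riemann relations, or equivalently by the Lefschetz decomposition on $X$), the identity $\langle \Phi(\alpha)-\lambda\alpha,\alpha'\rangle_{X}=0$ holding for every $\alpha'$ forces $\Phi(\alpha)=\lambda\alpha$, so $\Phi=\lambda\cdot \id$ with $\lambda\neq 0$.

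The only minor point to verify is that each arrow in the chain does land in the primitive subspace claimed: $p_{*}q^{*}$ lands in $\HH^{n-2}_{\prim}(F,\QQ)$ by Proposition~\ref{prop:BDgeneral}$(iii)$, and multiplication by $g^{n-2}$ sends $\HH^{n-2}_{\prim}(F,\QQ)$ into $\HH^{3n-6}_{\prim}(F,\QQ)$ because $g\cdot(g^{n-2}\beta)=g^{n-1}\beta=0$ for $\beta$ primitive. There is no essential obstacle; the argument is entirely a projection formula manipulation, and the only nontrivial input is the non-zero constant $\lambda$ already supplied by Proposition~\ref{prop:BDgeneral}$(ii)$.
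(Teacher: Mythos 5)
Your proof is correct and is essentially the paper's own argument: the paper packages the identical content -- Proposition~\ref{prop:BDgeneral}$(ii)$ together with non-degeneracy of the pairings, with the projection formula identifying $q_{*}p^{*}$ as the adjoint of $p_{*}q^{*}$ -- as an abstract linear-algebra fact about quadratic spaces, so the two proofs differ only in presentation. The one point you flag but leave unchecked, that the last arrow $q_{*}p^{*}$ indeed lands in $\HH^{n}_{\prim}(X,\QQ)$ (equivalently, for $n$ even, that $\Phi(\alpha)$ has no component along $h^{n/2}$, which your non-degeneracy step tacitly uses), is also implicit in the paper; it can be checked by pairing $\Phi(\alpha)$ against $h^{n/2}$, writing $p_{*}q^{*}(h^{n/2})=p_{*}(\xi^{n/2})$ as a polynomial in $g$ and $c$, and using the relations $p_{*}q^{*}(\alpha)\cdot c=0$ and $p_{*}q^{*}(\alpha)\cdot g^{n-1}=0$ established in the proof of Proposition~\ref{prop:BDgeneral}.
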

	\begin{proof}
		This is actually a general fact in linear algebra. Let $\Lambda_1$, $\Lambda_2$
		be two $\QQ$-quadratic spaces. If there is a linear isomorphism $\phi:
		\Lambda_1\xrightarrow{\cong}\Lambda_2$ and a non-zero rational number $\lambda$,
		such that $\lambda\langle x, y\rangle_{\Lambda_{1}}=\langle \phi(x),
		\phi(y)\rangle_{\Lambda_{2}}$, then the composition of the following chain of
		isomorphisms is $\lambda \cdot \id$.
		\begin{equation}
		\xymatrix{
			\Lambda_{1} \ar[r]^{\phi}&\Lambda_{2} \ar[r]^{\cong}& \Lambda_{2}^{\vee}
			\ar[r]^{{}^{t}\phi}& \Lambda_{1}^{\vee}\ar[r]^{\cong}&\Lambda_{1}\\
			x \ar@{|->}[r]& \phi(x) \ar@{|->}[r] & \langle -, \phi(x)\rangle \ar@{|->}[r]&
			\langle \phi(-), \phi(x)\rangle=\langle -, \lambda x\rangle \ar@{|->}[r]& 
			\lambda x
		}
		\end{equation}
		Thanks to Proposition \ref{prop:BDgeneral}, we can apply this general fact to
		the case when $\Lambda_{1}:=\HH^{n}_{\prim}(X, \QQ)$,
		$\Lambda_{2}:=\HH^{n-2}_{\prim}(F, \QQ)$, and $\phi:=p_{*}q^{*}$.
	\end{proof}

	We denote $\FF \to B$ the relative Fano variety of lines for the universal
	family $\XX \to B$ of cubic hypersurfaces.
	The relative Chow motives of $\XX$ and $\FF$ over $B$ can be related thanks to
	the following construction, performed
	for smooth cubic hypersurfaces of any dimension, due to Galkin--Shinder
	\cite{GS} and Voisin \cite{V15}. Let $X$ be a smooth cubic hypersurface and let
	$F$
	be its Fano variety of lines.  Galkin--Shinder \cite[Proof of Theorem
	5.1]{GS} constructed a 
	birational map
	$\phi:  X^{[2]} \dashrightarrow P_{X}$ from the Hilbert scheme of length-2
	subschemes $X^{[2]}$ of $X$,
	where $P_{X}:=\PP(T_{\PP^{n+1}}|_{X})$ is some $\PP^n$-bundle over $X$. Moreover, Voisin \cite[Proposition
	2.9]{V15} constructed an explicit resolution of indeterminacies
	\begin{equation}
  \label{diag:cubic}
  \begin{tikzcd}
    & & E \arrow[d, hook] \arrow[ddll, swap] \arrow[ddrr] \\
    & & V \arrow[dl, swap, "\phi^{1}"] \arrow[dr, "\phi^{2}"] \\
    P_2 \arrow[r, hook] \arrow[rrd, swap] & X^{[2]} \arrow[dashed, rr, "\phi"] & & P_X & P \arrow[l, hook', swap] \arrow[dll] \\
    & & F
  \end{tikzcd}
\end{equation}
	with the property  that 
	the morphism $\phi^2\colon V \to P_{X}$
	is a blow-up along a smooth center $ P\subset P_{X}$ of
	codimension $3$, where $P$ is the universal $\PP^1$-bundle
	over $F$, and the morphism
	$\phi^1\colon V\to X^{[2]}$ is a blow-up along a smooth center $
	P_{2}\subset X^{[2]}$ of codimension~$2$, where $P_{2}$ is the relative symmetric square of $P\to F$, thus has the structure of a
	$\PP^2$-bundle over $F$. Since the above construction of Galkin--Shinder and
	Voisin can
	be performed family-wise,  we obtain thanks to the projective bundle formula and
	the
	blow-up formula for Chow motives an isomorphism of
	relative motives over $B$ 
	\begin{align}\label{E:GSV}
	\h(\mathcal V) &\simeq \bigoplus_{i=0}^n \h(\XX)(-i) \oplus \h(\FF)(-3) \oplus
	\h(\FF)(-2)^{\oplus 2} \oplus \h(\FF)(-1)
	\nonumber \\
	& \simeq
	\h(\XX^{[2]}) \oplus  \h(\FF)(-3) \oplus \h(\FF)(-2) \oplus \h(\FF)(-1). 
	\end{align}
	
	From the above isomorphism, we can derive the following proposition that will be
	used in our new proof of Theorem~\ref{thm:MCKcubic}. It will also be used in the
	proof of Theorem~\ref{flv} to establish the Franchetta property of the family
	$\FF\to B$.

	\begin{prop}\label{prop:cubicfano} Let $\XX\to B$ denote the universal smooth
		cubic hypersurface in $\PP^{n+1}$ with $n>2$, $\XX^{(2)} \to B$ its relative
		symmetric square, and let
		$\FF\to B$ denote the universal Fano variety of lines in
		a cubic hypersurface. There exist relative morphisms of Chow motives over $B$
		$$\Phi : \h(\XX) \longrightarrow \h(\FF)(1) \oplus \bigoplus
		\mathds{1}_B(*),$$
		and
		$$\Psi : \h(\XX^{(2)}) \longrightarrow \bigoplus \h(\FF)(*) \oplus \bigoplus
		\mathds{1}_B(*)$$
		which are fiberwise split injective. Here, $\bigoplus \mathds{1}_B(*)$ means a
		direct sum of relative Lefschetz motives and  $\bigoplus \h(\FF)(*)$ means a
		direct sum of Tate twists of $\h(\FF)$.
	\end{prop}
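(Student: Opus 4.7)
The plan is to combine the Galkin--Shinder--Voisin isomorphism \eqref{E:GSV} with the Beauville--Donagi-type correspondence from Proposition~\ref{prop:BDgeneral} and Corollary~\ref{cor:composition}. First, to construct $\Phi$, I would split $\h(\XX)$ using the relative Chow--K\"unneth decomposition \eqref{eq:CKcubic} into Lefschetz summands $\mathds{1}_B(-i)$ (for $2i\neq n$, which collectively contribute to the $\bigoplus \mathds{1}_B(*)$ part of the target) and the middle piece $\h^n(\XX)$, which contains the primitive motive plus a single extra copy of $\mathds{1}_B(-n/2)$ when $n$ is even. For the primitive part, I would use the (appropriately Tate-twisted) cylinder correspondence induced by the universal $\PP^1$-bundle $\mathcal{P} \subset \XX \times_B \FF$ to produce a morphism into $\h(\FF)(1)$; Corollary~\ref{cor:composition} then provides a fiberwise left inverse, given by post-composition with multiplication by $g^{n-2}$ and with the transpose correspondence ${}^t[\mathcal{P}]$ up to a non-zero scalar, establishing fiberwise split injectivity on the primitive summand. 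Combined with the canonical inclusions of the Lefschetz summands, this produces $\Phi$.

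For $\Psi$, the plan is to first derive a relative motivic decomposition
$$\h(\XX^{(2)}) \;\simeq\; \h(\XX) \,\oplus\, \h(\XX)(-n) \,\oplus\, \h(\FF)(-2)$$
as follows. From the description $\XX^{[2]} \simeq \mathrm{Bl}_{\Delta_{\XX/B}}(\XX \times_B \XX)/S_2$ and the $S_2$-equivariant blow-up formula, using that $S_2$ acts trivially on the exceptional divisor $\PP(T_{\XX/B})$ (since it acts by $-1$ on the normal bundle of the relative diagonal), one obtains $\h(\XX^{[2]}) \simeq \h(\XX^{(2)}) \oplus \bigoplus_{i=1}^{n-1} \h(\XX)(-i)$. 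Substituting this into \eqref{E:GSV} and cancelling the common Tate-twisted summands of $\h(\XX)$ and $\h(\FF)$ on both sides yields the displayed decomposition. Applying $\Phi$ (resp.\ its Tate twist by $(-n)$) to the two $\h(\XX)$ summands then produces a fiberwise split injection $\Psi : \h(\XX^{(2)}) \to \bigoplus \h(\FF)(*) \oplus \bigoplus \mathds{1}_B(*)$.

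The main obstacle will be justifying the cancellation of common direct summands in the pseudo-abelian category $\MM_{\rm rat}$, which is not automatic. I would handle this by carefully tracking the explicit geometric projectors coming from the Voisin resolution \eqref{diag:cubic} through the isomorphism \eqref{E:GSV} and the $S_2$-equivariant blow-up, so that each cancellation becomes the construction of an explicit direct summand rather than an abstract appeal. As a fallback, since the conclusion only requires \emph{fiberwise} split injectivity, one may restrict to fibers and invoke Kimura--O'Sullivan finite-dimensionality of $\h(X)$ and $\h(F)$ (both being direct summands of finite-dimensional motives, e.g.\ by the GSV description of $\h(\VV)$ as a blow-up of a projective bundle over $\XX$), after which cancellation of common finite-dimensional motives is valid and yields the desired fiberwise split injections.
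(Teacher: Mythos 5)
Your overall route (the universal line correspondence $\PPP$, Corollary~\ref{cor:composition}, the Galkin--Shinder--Voisin isomorphism~\eqref{E:GSV}, and the blow-up description of $\XX^{[2]}$ over $\XX^{(2)}$) is the paper's, but the step that actually makes the proposition true at the level of \emph{Chow} motives is missing. Corollary~\ref{cor:composition} is a purely cohomological statement: it says that ${}^t\PPP\circ(\cdot\, g^{n-2})\circ\PPP$ acts on $\HH^n_{\prim}(X)$ as a non-zero multiple of the identity. This does not by itself give a left inverse of $\PPP\circ\pi^{n,\prim}_{\XX}$ modulo rational equivalence: the correspondence $\pi^{n,\prim}_{\XX}\circ{}^t\PPP\circ(\cdot\, g^{n-2})\circ\PPP\circ\pi^{n,\prim}_{\XX}$ could differ from $\lambda\,\pi^{n,\prim}_{\XX}$ by a homologically trivial cycle on $X\times X$, and then it need not be invertible in $\operatorname{End}(\h^n_{\prim}(X))$, so fiberwise split injectivity of the Chow-motive morphism does not follow. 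The paper closes exactly this gap with the Franchetta property for $\XX\times_B\XX$ (Proposition~\ref{prop:Franchettacubic}): all the correspondences involved are generically defined, so the difference is a generically defined homologically trivial cycle, hence zero in $\CH^n(X\times X)$. Your fallback via Kimura--O'Sullivan finite-dimensionality cannot replace this: finite-dimensionality of $\h(X)$ for a cubic hypersurface of arbitrary dimension (or of $\h(F)$) is an open conjecture, and your justification is circular, since~\eqref{E:GSV} only expresses $\h(\VV)$ \emph{in terms of} $\h(\XX)$ and $\h(\FF)$ and provides no external source of finite-dimensionality.

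On the $\Psi$ step, your cancellation of common summands is both problematic (as you acknowledge) and unnecessary. Since the target is allowed to be an arbitrary direct sum $\bigoplus\h(\FF)(\ast)\oplus\bigoplus\mathds{1}_B(\ast)$, it suffices to compose the split inclusion $\h(\XX^{(2)})\hookrightarrow\h(\XX^{[2]})$ (blow-up along the relative diagonal) with the split inclusion of $\h(\XX^{[2]})$ into $\bigoplus_{i=0}^{n}\h(\XX)(-i)\oplus\h(\FF)(-3)\oplus\h(\FF)(-2)^{\oplus 2}\oplus\h(\FF)(-1)$ furnished by~\eqref{E:GSV}, and then apply Tate twists of $\Phi$ to the $\h(\XX)(-i)$ summands; this is what the paper does, and no cancellation in the pseudo-abelian category is ever invoked. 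The finer isomorphism $\h(\XX^{(2)})\simeq\h(\XX)\oplus\h(\XX)(-n)\oplus\h(\FF)(-2)$ you aim for is a strictly stronger statement, not needed for this proposition, and establishing it would require precisely the kind of Franchetta-based argument (carried out in the authors' subsequent work) that your proposal leaves as the ``main obstacle''.
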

	
	\begin{proof} 
		Let $\PPP\subset \FF\times_B \XX$ be the relative universal line, also seen as
		a relative morphism of motives $$\PPP : \h(\XX) \longrightarrow \h(\FF)(1).$$
		Let $\pi^j_\XX$ be the relative Chow--K\"unneth decomposition
		\eqref{eq:CKcubic} and 
		let  $\h^{n,prim}(\XX)$ denote the motive defined by the projector 
		\begin{equation*}
		\pi^{n,\prim}_\XX = 	\pi^{n}_\XX \quad \text{in case $n$ is odd}, \quad
		\text{and} \
		\pi^{n,\prim}_\XX = 	\pi^{n}_\XX  - \frac{1}{3} H^{n/2} \times_B H^{n/2} 
		\quad \text{in case $n$ is even},
		\end{equation*}
		where $H \in \CH^1(\XX)$ denotes the relative hyperplane section. 
		Corollary \ref{cor:composition} implies
		that the composition
		$$\h^{n}_{\prim}(\XX) \hookrightarrow \h(\XX) \stackrel{\PPP}{\longrightarrow}
		\h(\FF)(1) \stackrel{\cdot g^{n-2}}{\longrightarrow} \h(\FF)(-1)
		\stackrel{{}^t\PPP}{\longrightarrow} \h(X) \twoheadrightarrow
		\h^{n}_{\prim}(\XX),$$
		where $g$ denotes the relative Pl\"ucker polarization, induces fiberwise on
		$\HH^n_{\prim}(X)$ a non-zero multiple of the identity. 
		The Franchetta property 
		for $\XX\times_B \XX \to B$,  proved in
		Proposition~\ref{prop:Franchettacubic}, shows that $\pi^{n,\prim}_{\XX} \circ\,
		^t\mathcal{P} \circ (\cdot g^{n-2}) \circ \PPP\circ \pi^{n,\prim}_{\XX}$ is
		equal to
		a non-zero multiple of $\pi^{n,\prim}_{\XX}$.
		It follows that $\PPP \circ \pi^{n,\prim}_\XX : \h^{n}_{\prim}(\XX) \to
		\h(\FF)(1)$ is fiberwise split injective, from which we deduce the existence
		of
		a morphism $\Phi$ as in the statement of the proposition.
		
		Let us now turn to the existence of the morphisms $\Psi$
		as in the statement. First, recall that for any smooth projective variety
		$X$, the Hilbert scheme $X^{[2]}$ is the blow-up of the symmetric square
		$X^{(2)}$ along the diagonal. Therefore, in our relative situation, we have an
		isomorphism of relative Chow motives
		$$\h(\XX^{[2]}) \simeq \h(\XX^{(2)}) \oplus \bigoplus_{i=1}^{n-1}
		\h(\XX)(-i).$$
		Combining the above isomorphism with the isomorphism~\eqref{E:GSV} and with
		the morphism of relative motives $\Phi$ constructed above, we
		obtain the desired, fiberwise split injective, morphism $\Psi$.
	\end{proof}
	
	Let us also mention the following direct consequence of the
	isomorphism~\eqref{E:GSV}.
	\begin{prop}\label{prop:stdconj}
		Let $F$ be the Fano variety of lines on a smooth cubic hypersurface $X$. Then
		homological and numerical equivalence agree on $F$.
	\end{prop}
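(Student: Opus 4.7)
The result is a direct consequence of the motivic identity~\eqref{E:GSV}. I would use a dimension-counting argument based on the following invariant: for any smooth projective complex variety $Y$ and codimension $j$, set
\[
\delta(Y, j) := \dim_\QQ \bigl(\CH^j(Y)_{\mathrm{num}}/\CH^j(Y)_{\mathrm{hom}}\bigr).
\]
This quantity is finite (both $\CH^j(Y)/\CH^j(Y)_{\mathrm{hom}}$ and $\CH^j(Y)/\CH^j(Y)_{\mathrm{num}}$ being finite-dimensional $\QQ$-vector spaces), non-negative, and vanishes precisely when homological and numerical equivalence coincide on $\CH^j(Y)$. It is additive along direct-sum decompositions in the category of rational Chow motives and transforms under Tate twist as $\delta(M(-k), j) = \delta(M, j-k)$.

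Applying $\delta(\cdot, j)$ to both expressions for $\h(V)$ appearing in~\eqref{E:GSV} and cancelling on both sides the common contributions $\delta(F, j-1) + \delta(F, j-2) + \delta(F, j-3)$, one obtains the key identity
\[
\delta(X^{[2]}, j) \;=\; \sum_{i=0}^{n} \delta(X, j-i) \;+\; \delta(F, j-2)
\qquad \text{for every } j \in \ZZ.
\]
Since every term on the right-hand side is non-negative, the vanishing $\delta(X^{[2]}, j) = 0$ for all $j$ will force $\delta(F, k) = 0$ for all $k$, which is the desired statement.

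It thus suffices to verify that homological and numerical equivalence coincide on $X^{[2]}$. Since $X^{[2]}$ is the blow-up of the quotient $X^{(2)} = (X \times X)/\Sy_2$ along the diagonal (which is isomorphic to $X$), and $\h(X^{(2)})$ is a direct summand of $\h(X \times X)$ cut out by the symmetrization projector, this reduces by additivity of $\delta$ to establishing hom $=$ num on $X$ and on $X \times X$. Both follow from the Kimura--O'Sullivan finite-dimensionality of the Chow motive of a smooth cubic hypersurface $X$: this is classical in low dimensions via identification of the primitive middle motive with (a Tate twist of) the motive of an intermediate Jacobian when $n$ is odd, or with a motive of K3 type when $n$ is even, and propagates to higher dimensions by an inductive argument using hyperplane sections.

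\textbf{The main obstacle} is the verification of Kimura finite-dimensionality for cubic hypersurfaces of arbitrary dimension; while routine in low dimensions, this requires a separate input for higher-dimensional cubics, which in practice can be bootstrapped from the projective-bundle and blow-up relations already employed in~\eqref{E:GSV}.
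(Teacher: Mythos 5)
Your dimension-count on the Galkin--Shinder--Voisin isomorphism~\eqref{E:GSV} is exactly the paper's argument: both proofs introduce the defect $\dim\ker\bigl(\CH^*(-)/\mathrm{hom}\to \CH^*(-)/\mathrm{num}\bigr)$, use its finiteness, non-negativity and additivity to cancel the common $\h(F)(-1)\oplus\h(F)(-2)\oplus\h(F)(-3)$ on both sides, and reduce the statement to ``hom $=$ num'' on $X^{[2]}$, hence (via the blow-up of $X^{(2)}$ along the diagonal and symmetrization) on $X$ and $X\times X$. Up to that point everything you write is correct.

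The gap is in your final input. You invoke Kimura--O'Sullivan finite-dimensionality of smooth cubic hypersurfaces of arbitrary dimension, but this is an open problem: it is known for cubic threefolds and fivefolds (where $\CH_0$, resp.\ $\CH_0$ and $\CH_1$, are representable), but already for cubic fourfolds the primitive motive is of K3 type, and K3 surfaces themselves are not known to be finite-dimensional; there is no ``inductive argument using hyperplane sections'' that propagates finite-dimensionality, nor can it be ``bootstrapped'' from the projective-bundle and blow-up relations in~\eqref{E:GSV}, which only express $\h(\mathcal V)$ in terms of the equally unknown $\h(X)$ and $\h(F)$. Moreover, even granting finite-dimensionality, it is not a known implication that finite-dimensionality of $\h(X\times X)$ forces homological and numerical equivalence to agree on $X\times X$: the standard consequence of finite-dimensionality is the nilpotence of numerically trivial \emph{self-correspondences}, not the coincidence of the two equivalence relations on Chow groups (conjecture $D$). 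The paper closes this step differently and unconditionally: the Lefschetz standard conjecture holds for smooth hypersurfaces, is stable under products, projective bundles and blow-ups (hence holds for $X\times X$ and $X^{[2]}$), and in characteristic zero it implies that homological and numerical equivalence coincide. Replacing your appeal to finite-dimensionality by this standard-conjectures argument repairs the proof and recovers the paper's.
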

	\begin{proof} The statement is clearly true if $\dim X \leq 2$ since then $\dim
		F \leq 0$. We thus assume that $\dim X > 2$.
		By specializing the relative isomorphism \eqref{E:GSV} to the fiber
		corresponding to $X$, we obtain an isomorphism of motives $$M\oplus P_1 \oplus
		P_2 \simeq N \oplus P_1,$$ 
		where $M = \bigoplus_{i=0}^n \h(X)(-i), N= \h(X^{[2]}) 
		, P_1 = \h(F)(-3) \oplus \h(F)(-2) \oplus \h(F)(-1)$ and $P_2 = \h(F)(-2)$.
		Since the standard conjectures hold for smooth hypersurfaces, and since
		$X^{[2]}$ is the blow-up of $X^{(2)}$ along the diagonal, homological and
		numerical equivalence agree on $M$ and also on~$N$.  Now, comparing the
		dimension of the kernels of $\CH^*(M\oplus P_1 \oplus P_2)/\mathrm{hom} \to
		\CH^*(M\oplus P_1 \oplus P_2)/\mathrm{num}$ and of $\CH^*(N\oplus
		P_1)/\mathrm{hom} \to \CH^*(N\oplus P_1)/\mathrm{num}$, we find that
		$\ker(\CH^*( P_2)/\mathrm{hom} \to \CH^*( P_2)/\mathrm{num}) = 0$, \emph{i.e.},
		that homological and numerical equivalence agree on $F$.
	\end{proof}

	\subsection{Proof of Theorem~\ref{flv}} \label{SS:proofFranchetta}
	The case of cubic fourfolds is one of the main results of \cite{FLV}. The
	point we want to make here is that, with the input of the isomorphism of
	Galkin--Shinder--Voisin~\eqref{E:GSV}, the original proof in \cite{FLV} actually
	works in any dimension with minor adaptation. 
	Let $F$ be the Fano	variety of lines on a smooth cubic hypersurface $X$. As
	before, let us denote
	$\operatorname{GDCH}^*(F)$ and  $\operatorname{GDCH}^*(F\times F)$ the
	subrings
	of $\CH^*(F)$ and $\CH^*(F\times F)$ respectively generated by cycles that are
	restrictions of cycles on the universal families $\FF \to B$ and $\FF\times_B
	\FF \to B$.

	Our previous results \cite[Lemma~3.1 and
	Proposition~6.3]{FLV} work without change and give us generators for
	$\operatorname{GDCH}^*(F)$ and 
	$\operatorname{GDCH}^*(F\times F)$. More precisely, it is shown that these
	generically defined cycles are ``tautological''\,:
	\begin{equation}\label{eqn:GDCHgenerators}
	\operatorname{GDCH}^*(F) = \R^{*}(F):=\langle g, c \rangle \quad \text{and}
	\quad
	\operatorname{GDCH}^*(F\times F) = \R^{*}(F\times F):=\langle p_i^*g, p_j^*c,
	\Delta_F, I
	\rangle,
	\end{equation}
	where $\langle - \rangle $ means the generated subalgebra, $p_i : F\times F
	\to F$ is the projection on the $i$-th factor, $g =
	-c_1(\mathcal{E})|_F$ is the Pl\"ucker polarization, $c =
	c_2(\mathcal{E})|_F$,
	$\mathcal{E}$ is the tautological rank-2 bundle on $G: = \Gr(2,n+2)$, and $I =
	\{(\ell_1,\ell_2) \in F\times F : \ell_1\cap \ell_2 \neq \emptyset\}$ is the
	incidence correspondence.

	Let us first show the Franchetta property for $\FF\to B$. Since $\operatorname{GDCH}^*(X)$, $\operatorname{GDCH}^*(X\times X)$
	and $\operatorname{GDCH}^*(F)$ are finitely generated, we may use the
	isomorphism~\eqref{E:GSV} and proceed as in the proof of
	Proposition~\ref{prop:stdconj}: thanks to Proposition \ref{prop:Franchettacubic}, rational and
	numerical equivalence agree on $\operatorname{GDCH}^*(X)$ and on
	$\operatorname{GDCH}^*(X\times X)$, therefore the argument in Proposition~\ref{prop:stdconj} shows that rational and numerical equivalence agree
	on $\operatorname{GDCH}^*(F)$, \emph{i.e.}, that the family $\FF \to B$ has the
	Franchetta property.

	\medskip
	
	As a consequence,
	$\R^{*}(F)$ is a Gorenstein algebra\footnote{A finite dimensional graded
		algebra $A^{*}=\bigoplus_{i=0}^{d}A^{i}$ is called \emph{Gorenstein of socle
			degree $d$}, if $A^{d}$ is 1-dimensional and the multiplication $A^{i}\times
		A^{d-i}\to A^{d}$ is a perfect pairing for all $0\leq i\leq d$.} of socle degree
	$2n-4$. We can therefore bound the dimensions of the $\R^i(F)$ (we will see
	shortly that the following are actually equalities)\,:
	\begin{equation}\label{eqn:dimR(F)}
	\dim \R^{i}(F)\leq r_{i}:=\begin{cases}\lfloor \frac{i+2}{2} \rfloor, &\text{
		if } i \leq n-2\ ,\\
	\lfloor \frac{2n-2-i}{2} \rfloor, &\text{ if } i > n-2\ .
	\end{cases}
	\end{equation}
	(We note that for $i=n-1$, the above dimension estimate takes the more
	precise form of Lemma~\ref{eqn:RelationInDeg=n-1}.)
	
	In the system of generators \eqref{eqn:GDCHgenerators} for $\R^{*}(F\times F)$,
	we first observe that the generator $\Delta_{F}$ is redundant, thanks to the
	following Voisin's relation \cite{V17} which holds in any dimension with the
	same proof\,:
	\begin{equation}\label{eqn:VoisinRelation}
	I^{2}=\lambda\Delta_{F}+I\cdot\Gamma_{1}(g_{1}, g_{2})+\Gamma_{2}(g_{1},
	g_{2},c_{1},c_{2}),
	\end{equation} 
	where $\Gamma_{1}$, $\Gamma_{2}$ are polynomials and $\lambda$ is a non-zero
	rational number.
	
	Next, we have the following relations, where $\alpha, \beta$ are non-zero
	rational numbers and $P, Q, R$ are polynomials. Note that their proof,
	originally for cubic fourfolds, works without change in any dimension. 
	\begin{enumerate}[$(i)$]
		\item In \cite[Proposition 17.5]{SV}, one finds $$\Delta_{F}\cdot I=\alpha 
		c_{1}\cdot\Delta_{F}-\beta g_{1}^{2}\cdot \Delta_{F}.$$
		\item In \cite[Lemma 17.6]{SV}, there is a polynomial $P$ such that
		$$c_{1}\cdot I=P(g_{1}, g_{2},c_{1}, c_{2})\,;$$
		$$c_{2}\cdot I=P(g_{2}, g_{1},c_{2}, c_{1}).$$
		\item In \cite[Appendix]{FLV}, we proved there exists a polynomial $Q$ such
		that $$\Delta_{F,*}(g)+R(g_{1}, g_{2})\cdot I= Q(g_{1}, g_{2}, c_{1}, c_{2}).
		$$
	\end{enumerate}
	By Voisin's relation \eqref{eqn:VoisinRelation}, we can eliminate the usage of
	$\Delta_F$ in previous relations $(i)$ and $(iii)$ to obtain the following
	relations, where $P, Q, R$ are some polynomials and $\mu$ is a non-zero rational
	number.
	\begin{enumerate}
		\item[$(i')$]  $I^{3}=\mu I\cdot g_{1}^{n-2}g_{2}^{n-2}+R(g_{1}, g_{2}, c_{1},
		c_{2})$\,;
		\item[$(iii')$] $I^{2}\cdot g_{1}=I\cdot P(g_{1}, g_{2})+Q(g_{1}, g_{2}, c_{1},
		c_{2})$.
	\end{enumerate}	
	Using Lemma \ref{eqn:RelationInDeg=n-1}, and relations
	\eqref{eqn:VoisinRelation}, $(i')$, $(ii)$ and $(iii')$, we get a system of
	\emph{linear} generators for $\R^{*}(F\times F)$, namely,
	\begin{equation}\label{eqn:generators}
	\R^{*}(F\times F)=\R^{*}(F)\boxtimes \R^{*}(F)+ I\cdot \left(\Span_{\QQ}\{1,
	g, \dots, g^{n-2}\}\boxtimes\Span_{\QQ}\{1, g, \dots, g^{n-2}\}\right)+\QQ\cdot
	I^{2}.
	\end{equation}
	Combining with \eqref{eqn:dimR(F)}, we can bound the dimensions of
	$\R^{*}(F\times F)$ as follows,  where the numbers $r_{i}$ are defined in
	\eqref{eqn:dimR(F)}. We will see that these are equalities in the end.
	\begin{equation}\label{eqn:dimR(FxF)}
	\dim \R^{k}(F\times F)\leq \begin{cases}
	\sum_{i=0}^{k}r_{i}r_{k-i}, &\text{ if } 0\leq k< n-2\,;\\
	\sum_{i=0}^{k}r_{i}r_{k-i}+(k-(n-2)+1), &\text{ if } n-2\leq k< 2n-4 \,;\\
	\sum_{i=0}^{2n-4}r_{i}r_{k-i}+(n-1)+1, &\text{ if }  k= 2n-4 \,;\\
	\sum_{i=0}^{k}r_{i}r_{k-i}+(3n-6-k+1), &\text{ if } 2n-4<k\leq  3n-6 \,;\\
	\sum_{i=0}^{k}r_{i}r_{k-i}, &\text{ if } 0\leq k< n-2.
	\end{cases}
	\end{equation}
	
	To show the injectivity of the cycle class map $\R^{*}(F\times F)\to
	\HH^{2*}(F\times F)$, we bound from below the dimension of the image in each
	degree by computing the dimension of Hodge classes on~$F\times F$ for $X$ very
	general. 
	
	\begin{lem}\label{lem:Hdg}
		Let $F$ be the Fano variety of lines on a very general cubic hypersurface
		$X\subseteq \PP^{n+1}$.  Then 
		$$\mathrm{hdg}(\HH^{2i}(F))=r_{i}, \text{ for any $i$ and }$$
		\begin{equation}
		\mathrm{hdg}(\HH^{2k}(F\times F)) =\begin{cases}
		\sum_{i=0}^{k}r_{i}r_{k-i}, &\text{ if } 0\leq k< n-2\,;\\
		\sum_{i=0}^{k}r_{i}r_{k-i}+(k-(n-2)+1), &\text{ if } n-2\leq k< 2n-4 \,;\\
		\sum_{i=0}^{2n-4}r_{i}r_{k-i}+(n-1)+1, &\text{ if }  k= 2n-4 \,;\\
		\sum_{i=0}^{k}r_{i}r_{k-i}+(3n-6-k+1), &\text{ if } 2n-4<k\leq  3n-6 \,;\\
		\sum_{i=0}^{k}r_{i}r_{k-i}, &\text{ if } 0\leq k< n-2.
		\end{cases}
		\end{equation}
		Moreover, if $n\neq 2$,
		the cycle class map induces surjective morphisms
		$$\operatorname{GDCH}^*(F) {\longrightarrow}
		\operatorname{Hdg}(\HH^*(F)) \quad \text{and} \quad
		\operatorname{GDCH}^*(F\times F) {\longrightarrow}
		\operatorname{Hdg}(\HH^*(F\times F)).$$
		In particular, \eqref{eqn:dimR(F)} is an equality.
	\end{lem}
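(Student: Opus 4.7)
The strategy is to compute the Hodge numbers of $F$ and $F\times F$ by expressing their Hodge structures in terms of that of $\HH^*(X)$ via the Galkin--Shinder--Voisin isomorphism~\eqref{E:GSV}, then invoking Lemma~\ref{lem:Hodgeclasses}. The surjectivity claims will then follow by dimension matching against the Franchetta property for $\FF\to B$, already established earlier in the proof of Theorem~\ref{flv}, combined with the a priori upper bound~\eqref{eqn:dimR(F)}.

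Concretely, combining~\eqref{E:GSV} with the blow-up formula $\h(X^{[2]})\simeq \h(X^{(2)})\oplus \bigoplus_{i=1}^{n-1}\h(X)(-i)$ yields the motivic identity
\begin{equation*}
  \h(X^{(2)}) \simeq \h(F)(-2) \oplus \h(X) \oplus \h(X)(-n).
\end{equation*}
Writing the Hodge structure $\HH^*(X) = \bigoplus_{j=0}^{n}\QQ(-j) \oplus H_X(-1)$, with $\QQ(-j)$ in degree $2j$ (spanned by $h^j$) and $H_X(-1)$ in degree $n$, one has $\HH^*(X^{(2)}) = \mathrm{Sym}^2_{\mathrm{Koszul}}\HH^*(X)$; by Lemma~\ref{lem:Hodgeclasses} the transcendental contributions for very general $X$ are $\mathrm{hdg}(H_X) = 0$ and $\mathrm{hdg}(\mathrm{Sym}^2 H_X) = 1$. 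A direct degreewise bookkeeping, subtracting the Hodge classes of $\HH^*(X) \oplus \HH^*(X)(-n)$ from those of $\mathrm{Sym}^2_{\mathrm{Koszul}}\HH^*(X)$, then produces the desired equality $\mathrm{hdg}(\HH^{2i}(F)) = r_i$.

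The Hodge-number computation for $\HH^{2k}(F\times F)$ uses the K\"unneth formula together with the above motivic decomposition on both factors. Tensor products of two Hodge classes on $F$ contribute $\sum_{i=0}^{k} r_i r_{k-i}$, while tensor products involving the transcendental summands of $\HH^*(F)$ (which, via the motivic identity, are pushed copies of $H_X$ and of $\mathrm{Sym}^2 H_X$) contribute via Lemma~\ref{lem:Hodgeclasses}, using $\mathrm{hdg}(H_X\otimes H_X) = 1$ and $\mathrm{hdg}(\mathrm{Sym}^2 H_X \otimes \mathrm{Sym}^2 H_X) = 2$. The extra terms $(k-(n-2)+1)$, $(n-1)+1$, and $(3n-6-k+1)$ in the stated formula count exactly how many K\"unneth positions carry such transcendental-by-transcendental Hodge classes, and the case boundaries correspond to the ranges of $k$ over which these contributions turn on and off.

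Finally, for the surjectivity claims, Franchetta for $\FF\to B$ identifies $\operatorname{GDCH}^*(F) = \R^*(F) \hookrightarrow \HH^*(F)$ with image inside $\operatorname{Hdg}(\HH^*(F))$, giving $\dim \R^i(F) \leq r_i$; since the tautological monomials $g^a c^b$ furnish $r_i$ linearly independent Hodge classes (their linear independence being guaranteed by Franchetta), equality holds and \eqref{eqn:dimR(F)} is realized. The analogous dimension count for $F\times F$ uses the generating system~\eqref{eqn:generators}, with $I$ and $\Delta_F$ accounting for the extra Hodge classes produced by $\mathrm{Sym}^2 H_X$, via Corollary~\ref{cor:composition}, which identifies $I$ as the motivic avatar of the quadratic/symplectic form responsible for the Hodge class in $\mathrm{Sym}^2 H_X$. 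The main technical obstacle will be the combinatorial bookkeeping in the $F\times F$ case, carefully matching each extra Hodge class in weight $2k$ to a specific generator of the form $I\cdot(g_1^a g_2^b)$ or $I^2$ from~\eqref{eqn:generators}, and verifying that the resulting enumeration agrees with the piecewise formula stated in the lemma.
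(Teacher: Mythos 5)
Your computation of the Hodge numbers follows essentially the route the paper indicates: apply cohomology to the Galkin--Shinder--Voisin isomorphism~\eqref{E:GSV} (together with the blow-up formula for $X^{[2]}\to X^{(2)}$ and semi-simplicity of polarizable Hodge structures) to get $\HH^*(F)\simeq \mathrm{Sym}^2H_X\oplus\bigoplus_{i=0}^{n-2}H_X(-i)\oplus(\text{Tate part})$, and then count Hodge classes via Lemma~\ref{lem:Hodgeclasses}. One small caveat: your ``motivic identity'' $\h(X^{(2)})\simeq \h(F)(-2)\oplus\h(X)\oplus\h(X)(-n)$ uses cancellation of direct summands, which is not available for Chow motives; it is only needed (and only valid, via semi-simplicity) at the level of Hodge structures, which suffices here.

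The genuine gap is in the surjectivity statements, which are the real content of the lemma. You assert that the tautological monomials $g^ac^b$ give $r_i$ linearly independent classes in $\HH^{2i}(F)$, ``their linear independence being guaranteed by Franchetta.'' This is circular: the Franchetta property gives \emph{injectivity} of the cycle class map on $\operatorname{GDCH}^*(F)$, i.e.\ it turns cohomological relations into Chow relations, but it says nothing about the absence of cohomological relations among the monomials --- and such relations do exist (Lemma~\ref{eqn:RelationInDeg=n-1} exhibits $g^{n-1}$ as a combination of the other monomials, which is exactly why $r_{n-1}$ drops). The equality $\dim\operatorname{R}^i(F)=r_i$ and the surjectivity $\operatorname{GDCH}^*(F)\to\operatorname{Hdg}(\HH^*(F))$ are precisely what must be proved, and the paper does this differently: it uses Proposition~\ref{prop:Franchettacubic}, which gives surjectivity of $\operatorname{GDCH}^*\to\operatorname{Hdg}(\HH^*)$ for $X$ and $X\times X$ (hence for the summands $M$ and $N$ in the notation of Proposition~\ref{prop:stdconj}), and then transfers this to $F$ by comparing cokernel dimensions across the isomorphism~\eqref{E:GSV}; the equality in~\eqref{eqn:dimR(F)} is then a \emph{consequence}, not an input. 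For $F\times F$ you explicitly defer the matching of the extra Hodge classes to ``combinatorial bookkeeping'' as a remaining obstacle, whereas the paper closes this by observing (via Lemma~\ref{lem:Hodgeclasses}) that the only Hodge classes not already hit through the surjectivity for $F$ come from the $H_X\otimes H_X$-type summands and the one extra class in $\mathrm{Sym}^2H_X\otimes\mathrm{Sym}^2H_X$, all accounted for by the diagonal $\Delta_F$ (and its products with $g_1,g_2$). As written, your proposal does not establish either surjectivity claim.
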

	\begin{proof}
		The Hodge structure $\HH^{*}(F, \QQ)$ was computed, for the Fano variety of
		lines on any smooth cubic hypersurface, in terms of $H_{X}:=\HH_{\operatorname{prim}}^{n}(X)(1)$ by Galkin--Shinder
		\cite[Theorem~6.1]{GS}\,:
		$$\HH^*(F) \simeq \mathrm{Sym}^2H_X \oplus \bigoplus_{i=0}^{n-2} H_X(-i) \oplus
		\bigoplus_{i=0}^{2n-4} \QQ(-i)^{a_i},
		$$
		where $a_{i} = r_i$ except for $a_{n-2} = r_{n-2}-1$. (This can be seen by
		applying $\HH^*$ to the isomorphism~\eqref{E:GSV} and using the semi-simplicity
		of the category of polarizable Hodge structures).
		The computation of $\mathrm{hdg}(\HH^{2*}(F))$ and
		$\mathrm{hdg}(\HH^{2*}(F\times F))$  is then straightforward from Lemma
		\ref{lem:Hodgeclasses}.
		
		We now show that for $X$ very general of dimension $\neq 2$, the cycle class
		map $\operatorname{GDCH}^*(F) \to \mathrm{Hdg}(\HH^*(F))$ is surjective. The
		proof is similar to that of Proposition~\ref{prop:stdconj}. With the notation
		therein, we know that the cokernels of $\operatorname{GDCH}^*(M) \to \HH^*(M)$
		and $\operatorname{GDCH}^*(N) \to \HH^*(N)$ are trivial by
		Proposition~\ref{prop:Franchettacubic}. It follows for dimension reasons that
		the cokernel of $\operatorname{GDCH}^*(P_2) \to \HH^*(P_2)$ is trivial,
		\emph{i.e.}, that the cycle class map $\operatorname{GDCH}^*(F) \to
		\mathrm{Hdg}(\HH^*(F))$ is surjective.
		
		Finally, by Lemma~\ref{lem:Hodgeclasses},  to see that the cycle class map
		$\operatorname{GDCH}^*(F\times F)\to \operatorname{Hdg}(\HH^*(F\times F))$ is
		surjective for $X$ very general of dimension $\neq 2$, it suffices to observe
		that the extra class appearing in $\operatorname{Sym}^2H_X \otimes
		\operatorname{Sym}^2H_X$ is accounted for by the diagonal $\Delta_F$.
	\end{proof}

	Now we can conclude the proof of Theorem \ref{flv}. The surjectivity of the
	restriction of the cycle class maps to generically defined cycles on $F$ or
	$F\times F$ was treated in Lemma~\ref{lem:Hdg}. We note that the Franchetta
	property for $F$ and $F\times F$ in case $\dim X \leq 2$ is trivial since then
	$F$ is either empty or zero-dimensional. Assume now that $\dim X > 2$.
	For a very general cubic hypersurface~$X$ and any integer $k$, consider the
	cycle class map 
	$$\R^{k}(F\times F)\to \HH^{2k}(F\times F).$$ 
	On the one hand, by \eqref{eqn:GDCHgenerators}, $\R^{k}(F\times F)$ consists of
	the generically defined cycles $\operatorname{GDCH}^{k}(F\times F)$, and hence
	the cycle class map is onto the space of algebraic classes in $\HH^{2k}(F\times
	F)$\,; on the other hand, by comparing \eqref{eqn:dimR(FxF)} and Lemma
	\ref{lem:Hdg}, we see that the upper bound for the dimension of $\R^{k}(F\times
	F)$ coincides with the dimension of space of algebraic classes in
	$\HH^{2k}(F\times F)$. Therefore, \eqref{eqn:dimR(FxF)} are equalities and the
	cycle class map is injective. A specialization argument then shows that the
	Franchetta property holds for $\mathcal{F}\times_{B} \mathcal{F}\to B$, thereby
	concluding the proof of Theorem~\ref{flv}.\qed

	\subsection{Proof of Theorem~\ref{thm:MCKcubic}}\label{SS:proofMCK}
	Let
	$\pi^i_{\XX}\in \CH^n(\XX\times_B \XX)$ be the relative Chow--K\"unneth
	projectors of \eqref{eq:CKcubic}. We have to show that the cycle
	\begin{equation}\label{eq:mckcubic}
	\pi^k_{\XX}\circ \delta_{\XX/B}\circ (\pi^i_{\XX}\otimes \pi^j_{\XX}) \in
	\CH^{2n}(\XX\times_B\XX\times_B\XX)
	\end{equation} vanishes fiberwise for all $k\neq i+j$. 
	Let $\sigma$ be the relative morphism $\XX\times_B \XX \to \XX\times_B \XX$
	that permutes the factors. Then, by commutativity of the algebra structure on
	the motive of a variety, we have $$\pi^k_{\XX}\circ \delta_{\XX/B}\circ
	(\pi^i_{\XX}\otimes \pi^j_{\XX})\circ \sigma = \pi^k_{\XX}\circ
	\delta_{\XX/B}\circ \sigma \circ (\pi^j_{\XX}\otimes \pi^i_{\XX})= 
	\pi^k_{\XX}\circ \delta_{\XX/B}\circ (\pi^j_{\XX}\otimes \pi^i_{\XX})$$ as
	cycles in $\CH^{2n}(\XX\times_B \XX\times_B\XX)$. It follows that the pull-push
	of
	\eqref{eq:mckcubic} along  $\XX \times_B \XX \times_B \XX \to \XX^{(2)}\times_B
	\XX$ is the cycle
	\begin{equation}\label{eq:mckcubicsym}
	\pi^k_{\XX}\circ \delta_{\XX/B}\circ (\pi^i_{\XX}\otimes \pi^j_{\XX} +
	\pi^j_{\XX}\otimes \pi^i_{\XX}) \in \CH^{2n}(\XX\times_B \XX \times_B\XX).
	\end{equation} 
	Since the cycles \eqref{eq:mckcubic} vanish cohomologically fiberwise for all
	$k\neq i+j$, the same holds for the cycles \eqref{eq:mckcubicsym} for all
	$k\neq
	i+j$. Combining Proposition~\ref{prop:cubicfano} with Theorem~\ref{flv}, we
	find
	that the cycles~\eqref{eq:mckcubicsym} vanish in the Chow group fiberwise for
	all $k\neq i+j$.
	Composing on the right with $\pi^i_{\XX}\otimes\pi^j_{\XX}$ proves the
	theorem.\qed

	\begin{rmk}[Proof of Theorem~\ref{thm:MCKcubic} using {\cite[Corollary~3.3.9]{DiazIMRN}}]
		\label{R:Diaz}
Let $X$ be a fiber of $\XX \to B$. Using \eqref{eqn:RelationX2}, one can see that the cycle 	$\pi^k_{X}\circ \delta_{X}\circ (\pi^i_{X}\otimes \pi^j_{X}) \in \operatorname{CH}^{2n}(X\times X \times X)$ belongs to the subring $\operatorname{R}^*(X\times X\times X) := \langle p_i^*h,   p_{jk}^*\Delta_X \rangle$, where $h$ is the restriction of $H$ to the fiber $X$ and where $p_i$ and $p_{jk}$ are the natural projections. It follows that the cycle $\pi^k_{X}\circ \delta_{X}\circ (\pi^i_{X}\otimes \pi^j_{X}) + \pi^k_{X}\circ \delta_{X}\circ (\pi^j_{X}\otimes \pi^i_{X})$ belongs to $\operatorname{R}^*(X^{(2)}\times X):= \operatorname{R}^*(X\times X\times X)^{\mathfrak{S}_2}$, where $\mathfrak{S}_2$ is the symmetric group acting by permuting the first two factors. Diaz shows in \cite[Corollary~3.3.9]{DiazIMRN} that the latter subring injects in cohomology. This yields that the cycle \eqref{eq:mckcubicsym} vanishes fiberwise for all $k\neq i+j$ and one concludes as before, by composing on the right with $\pi^i_{\XX}\otimes\pi^j_{\XX}$, that the cycle \eqref{eq:mckcubic} vanishes fiberwise for all $k\neq i+j$, thereby establishing Theorem~\ref{thm:MCKcubic}.
	\end{rmk}
	
	\begin{rmk}[Chern classes]\label{rmk:ChernCubic}
		By definition of $\pi^2_{\mathcal{X}}$, we have for a smooth cubic fourfold
		$X$
		that $\CH^1(X)_{(0)} = \CH^1(X) = \QQ h$. Since the Chern classes of $X$ are
		powers of $h$, we get by multiplicativity that $$c_i(X) \in \CH^i(X)_{(0)}$$
		for
		all $i$. This is especially useful in view of
		Proposition~\ref{prop:SVmachine}.
	\end{rmk}
	
	\begin{rmk}[Uniqueness of the MCK decomposition]
		Let $X$ be a smooth cubic fourfold whose Chow motive is finite-dimensional, in
		the sense of Kimura \cite{Kimura}. Then $X$ admits a unique MCK decomposition.
		Indeed, let $\{\pi_X^i,0\leq i\leq 8\}$ be a Chow--K\"unneth decomposition for
		$X$. By Kimura finite-dimensionality, since $X$ has vanishing odd cohomology,
		we
		must have $\pi_X^i=0$ for all $i$ odd. Since $\CH_0(X) = \QQ h^4$, we have by
		Remark~\ref{rmk:pi0pi2d} that $\pi_X^0 = \frac{1}{3}h^4 \times 1_{X}$. Since
		$\HH^2(X) = \QQ(-1) = \QQ h$, we have $\h^2(X) \simeq \mathds{1}(-1)$. In
		other
		words, $\pi^2_{X}$ factors as $\h(X) \to \mathds{1}(-1) \to \h(X)$. Since
		$\CH^1(X)
		=\QQ h$, we see that necessarily $\pi^2_{X} = \frac{1}{3} (h^3+\varepsilon)
		\times
		h$, where $\varepsilon$ is a homologically trivial 1-cycle on~$X$. Suppose now
		that $\{\pi_X^i,0\leq i\leq 8\}$ is an MCK decomposition. Then by
		Proposition~\ref{prop:Duality} we know it must be self-dual. In particular, we
		get $\pi^8_X = \frac{1}{3}1_{X}\times h^4$ and $\pi^6_{X} = \frac{1}{3} 
		h\times
		(h^3+\varepsilon)$. By multiplicativity, we must have $\CH^3(X)_{(0)} =
		(\CH^1(X)_{(0)})^{\cdot 3} = \QQ h^3$\,; this implies that $\varepsilon = 0$,
		and
		thereby establishes the uniqueness claim.
		
		Let now $X$ be a cubic threefold. It is known that $\CH_0(X) = \QQ h^3$ and
		consequently that $\h(X)$ is finite-dimensional in the sense of Kimura. The
		above arguments then establish unconditionally that $X$ admits a unique MCK
		decomposition. Likewise, if $X$ is a cubic fivefold, then it is known that
		$\CH_0(X) = \QQ h^5$ and $\CH_1(X) = \QQ h^4$ (see \cite{Otw}) and consequently
		\cite[Example~4.12]{Vial1} that $\h(X)$ is finite-dimensional in the sense of
		Kimura. The reader will have no trouble adapting the above arguments to show
		unconditionally that cubic threefolds and cubic fivefolds admit a unique MCK
		decomposition.
	\end{rmk}

	\section{K\"uchle fourfolds of type $c7$}\label{S:$c7$}
	As a first step towards the construction of Fano fourfolds, K\"uchle classified
	in \cite{Ku} all Fano fourfolds of index 1 that can be obtained as zero loci of
	generic sections of homogenous vector bundles on Grassmannian varieties. Among
	those K\"uchle fourfolds, those with type $c5$,  $c7$, $d3$ are of cohomological
	K3 type. 
	The aim of this section is to establish the existence of an MCK decomposition
	for  K\"uchle fourfolds of type $c7$. Those of type $d3$ were dealt with in
	\cite{d3}, while the case of those of type $c5$ is still open and certainly
	worth further study\,; see \cite{Kuz2}.
	Let us first recall the definition. 
	\begin{defn}[{\cite[Theorem 3.1]{Ku}}] A \emph{K\"uchle fourfold of type $c7$}
		is the zero locus of a generic section of the vector bundle
		\[ \wedge^2 \mathcal{Q}(1)\oplus {\mathcal O}(1) \]
		on the Grassmannian $\Gr(3,8)$, where $\mathcal{Q}$ is the tautological rank-5
		quotient bundle and $\mathcal{O}(1):=\det \mathcal Q$ is the Pl\"ucker line
		bundle.
	\end{defn}
	
	K\"uchle fourfolds of type $c7$ are of cohomological K3 type\,; their Hodge
	diamonds are as follows\,:
	\begin{center}
		\begin{tabular}{ccccccccc}
			&&&&1&&&&\\
			&&&0&&0&&&\\
			&&0&&2&&0&&\\
			&0&&0&&0&&0&\\
			0&&1&&22&&1&&0.\\
			&0&&0&&0&&0&\\
			&&0&&2&&0&&\\
			&&&0&&0&&&\\
			&&&&1&&&&
		\end{tabular}
	\end{center}
	The main result of this section is the following.	
	\begin{thm}\label{main3} A K\"uchle fourfold $X$ of type $c7$ has an MCK
		decomposition. Moreover, the Chern classes $c_j(X)$ are in
		$\CH^\ast(X)_{(0)}$.
	\end{thm}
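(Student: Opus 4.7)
The plan is to follow the template laid out in Section~\ref{S:cubic} (and, before that, in \cite{d3, FLV}) for Fano fourfolds of K3 type: first establish the Franchetta property for the universal family of K\"uchle fourfolds of type $c7$ and for its relative (symmetric) square, then use it to verify the multiplicativity of an explicitly defined relative Chow--K\"unneth decomposition. Let $B\subset\PP\HH^0(\Gr(3,8),\wedge^2\mathcal{Q}(1)\oplus\calO(1))$ be the open subset parameterizing sections whose zero locus is smooth of dimension four, and let $\XX\to B$ denote the resulting universal family. Viewing $\XX\subset\Gr(3,8)\times B$, the projection $\XX\to\Gr(3,8)$ is a projective bundle (the fiber over $p$ being the projectivization of the subspace of sections vanishing at~$p$), and analogously $\XX\times_B\XX\to\Gr(3,8)\times\Gr(3,8)$ and $\XX^{(2)}\times_B\XX\to\Gr(3,8)^{(2)}\times\Gr(3,8)$ are stratified projective bundles. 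By \cite[Proposition~5.2]{FLV}, this identifies $\operatorname{GDCH}^*(X)$ with the image of the Gysin pull-back from $\CH^*(\Gr(3,8))$, and $\operatorname{GDCH}^*(X\times X)$ with the subring of $\CH^*(X\times X)$ generated by pull-backs of $\operatorname{GDCH}^*(X)$ along the two projections together with the diagonal class $\Delta_X$.

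Next, I would establish the Franchetta property for all three families. For $\XX\to B$ it is immediate, since $\CH^*(\Gr(3,8))$ injects into cohomology. For $\XX\times_B\XX\to B$, I would apply Lemma~\ref{lem:Hodgeclasses} to the primitive K3-type Hodge structure $H_X := \HH^4_{\mathrm{prim}}(X)(1)$ of a very general fiber to compute $\mathrm{hdg}\bigl(\HH^{2k}(X\times X)\bigr)$, and then match this dimension with an upper bound on $\dim\operatorname{GDCH}^k(X\times X)$ produced from the explicit generators above, modulo the obvious relations on $\Gr(3,8)\times\Gr(3,8)$ together with an excess-intersection relation for $\Delta_X$ in the spirit of \eqref{eqn:RelationX2}. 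A similar (but more elaborate) computation should yield the Franchetta property on $\XX^{(2)}\times_B\XX\to B$, using the $\mathfrak{S}_2$-invariants of the generators of $\operatorname{GDCH}^*(X^3)$.

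Third, with Franchetta in hand, I would define the relative decomposition
$$\pi^{2i}_{\XX}:=\tfrac{1}{d}\,H^{4-i}\times_B H^i\quad(i\neq 2),\qquad \pi^4_{\XX}:=\Delta_{\XX/B}-\sum_{i\neq 2}\pi^{2i}_{\XX},\qquad \pi^{2i+1}_{\XX}:=0,$$
where $H$ is the relative Pl\"ucker polarization and $d=\deg(H^4)$. The K\"unneth property and the projector identities hold cohomologically since $X$ is of cohomological K3 type, and they lift to rational equivalence on each fiber thanks to the Franchetta property for $\XX\times_B\XX$. To verify multiplicativity, I would follow the symmetrization argument of \S\ref{SS:proofMCK} and Remark~\ref{R:Diaz}: for each $(i,j,k)$ with $k\neq i+j$, the symmetrized cycle
$$\pi^k_X\circ\delta_X\circ(\pi^i_X\otimes\pi^j_X)+\pi^k_X\circ\delta_X\circ(\pi^j_X\otimes\pi^i_X)\in\CH^8(X^3)$$
is generically defined, descends to $X^{(2)}\times X$, and vanishes cohomologically; by the Franchetta property on $\XX^{(2)}\times_B\XX\to B$ it therefore vanishes in Chow, and post-composing with $\pi^i_{\XX}\otimes\pi^j_{\XX}$ returns the vanishing of the unsymmetrized cycle.

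The main technical obstacle I anticipate is the explicit determination of the generators of $\operatorname{GDCH}^*(X^{(2)}\times X)$ and the matching dimension count of Hodge classes via Lemma~\ref{lem:Hodgeclasses}, required for the last of the three Franchetta properties above. Once the MCK decomposition is established, the assertion $c_j(X)\in\CH^j(X)_{(0)}$ is automatic: the Chern classes of $X$ are restrictions from $\Gr(3,8)$, hence lie in the subring of $\CH^*(X)$ generated by $H\in\CH^1(X)_{(0)}$, which is of grade zero by construction and by multiplicative closure.
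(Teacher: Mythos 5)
Your strategy---running the cubic-hypersurface argument verbatim on the universal family of zero loci over $\Gr(3,8)$---breaks down at the point where you write down the projectors. A K\"uchle fourfold of type $c7$ has $h^{1,1}(X)=2$ (see the Hodge diamond in \S\ref{S:$c7$}): by Kuznetsov's description it is the blow-up of a special cubic fourfold $Y$ along a Veronese surface, so $\HH^2(X)$ is spanned by two independent divisor classes. Consequently $\pi^2_{\XX}=\frac{1}{d}H^3\times_B H$ is \emph{not} a Chow--K\"unneth projector for $\HH^2$: fiberwise it projects onto the line $\QQ H\subset\HH^2(X)$ only, and the complementary divisor class is then absorbed into $\pi^4_{\XX}$, violating the K\"unneth property. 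Your justification (``the K\"unneth property and the projector identities hold cohomologically since $X$ is of cohomological K3 type'') is incorrect: being of K3 type constrains the off-diagonal Hodge numbers but says nothing about $b_2$. Nor can this be patched inside your framework by enlarging the projectors: by the very projective-bundle argument you invoke, $\operatorname{GDCH}^1(X)$ for the family of zero loci over $\Gr(3,8)$ is the one-dimensional image of $\CH^1(\Gr(3,8))$, so the second divisor class (the exceptional divisor of $X\to Y$) is not generically defined for that family, and the symmetrization/Franchetta mechanism of \S\ref{SS:proofMCK} then has no generically defined Chow--K\"unneth decomposition to act on. The closing assertion about Chern classes is also unjustified as written: restrictions from $\Gr(3,8)$ are generated by Schubert classes, not by the Pl\"ucker class alone, so ``restricted from the Grassmannian'' does not imply ``in the subring generated by $H$''; the grade-$0$ statement needs the Franchetta-type argument, not multiplicative closure of $\langle H\rangle$.

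The paper's proof takes a different route precisely to avoid these issues. Using Kuznetsov's result that a generic $X$ of type $c7$ is the blow-up of a smooth cubic fourfold $Y\subset\PP^5$ containing the Veronese surface $S\cong\PP^2$ along $S$, one works over (an open part of) $\PP\HH^0(M,\mathcal{O}_M(3H-E))$ with $M$ the blow-up of $\PP^5$ along $S$, proves the Franchetta property for both $\XX\to B$ and the associated family $\YY\to B$ of special cubic fourfolds (Lemma~\ref{fp}, via the projective-bundle structure of the universal family over $M$), and then applies the blow-up part of Proposition~\ref{prop:SVmachine}: the MCK for $Y$ is Theorem~\ref{thm:MCKcubic}, its Chern classes are of grade $0$ by Remark~\ref{rmk:ChernCubic}, $S$ has trivial Chow groups, and the one nontrivial hypothesis---that the graph of $S\hookrightarrow Y$ lies in $\CH^*(S\times Y)_{(0)}$---follows from Lemma~\ref{fp} because that graph and the product MCK are universally defined; the statement $c_j(X)\in\CH^*(X)_{(0)}$ is obtained by the same universal-definition-plus-Franchetta argument. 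To salvage your direct approach you would at least need a second generically defined divisor class (with a dual $1$-cycle) on the Grassmannian-section family and a Hodge-class count adapted to $b_2=2$, which in practice amounts to importing the blow-up description anyway.
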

	
	We need an alternative description, due to Kuznetsov \cite[Section 4]{Kuz1}, of
	K\"uchle fourfolds of type $c7$ as blow-ups of (special) cubic fourfolds along
	the Veronese surface. Let $M$ be the blow-up  of $\PP^5$ along the Veronese
	surface $S:=v_2(\PP^2)$, where $v_{2}: \PP^{2}\to \PP^{5}$ is the embedding
	induced by the linear system $|\mathcal{O}_{\PP^{2}}(2)|$.\footnote{In
		\cite[Section
		4]{Kuz1}, $M$ was defined to be the zero locus of a generic section of the
		vector bundle
		$\wedge^3 \mathcal{Q}$ on $\Gr(3,8)$. But according to \cite[Theorem
		4.10]{Kuz1}, this is equivalent to the definition by blow-up.} 
	Let $\pi: M\to \PP^{5}$ be the blow-up morphism.		
	It was shown in \cite[Corollary~4.11]{Kuz1} that a generic K\"uchle fourfold
	$X$ of type $c7$ arises as a
	member of the linear system $\vert 3H-E\vert$ on $M$, where $H$ is the
	pull-back of the hyperplane section class on $\PP^5$ and $E$ is the
	exceptional
	divisor. Therefore, its image $Y:=\pi(X)$ is a cubic fourfold containing $S$. 
	Set $\bar{B}:=\PP \HH^{0}(M, \mathcal{O}_{M}(3H-E))$, and set $B$ to be the
	Zariski open subset of $\bar{B}$ parameterizing smooth fourfolds $X_b\subset M$
	such that the cubic fourfold
	$Y_b=\pi(X_b)$
	is also smooth. Let $\XX\subset M\times B$ and $\YY\subset \PP^5\times B$
	denote	the universal families of K\"uchle fourfolds of type $c7$ and cubic
	fourfolds, respectively.
	
	As a first step of the proof, we establish the Franchetta property for these
	two families, which is of independent interest. Note that $\YY\to B$ is the
	universal family of Hassett's special cubic fourfolds $\mathcal{C}_{20}$.
	\begin{lem}\label{fp} The families $\XX\to B$ and $\YY\to B$ have the
		Franchetta property, in the sense of Definition~\ref{def:Franchetta}.
	\end{lem}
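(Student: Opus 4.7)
The plan is to treat both families uniformly by combining the stratified projective bundle method of \cite[Proposition~5.2]{FLV} to identify generators of the generically defined cycles, with explicit Chow-theoretic computations of their products, and finally a dimension match against the Hodge classes on a very general fibre. For $\YY\to B$, consider the first projection $\YY\subset \PP^5\times B\to \PP^5$: over a point $y\in \PP^5\setminus S$ the fibre is the hyperplane in $B$ cut out by the single linear condition $y\in Y_b$, while over $y\in S$ the fibre is all of $B$ since every $Y_b$ contains $S$ by construction. This exhibits $\YY\to \PP^5$ as a stratified projective bundle with strata $S$ and $\PP^5\setminus S$, so \cite[Proposition~5.2]{FLV} shows that $\operatorname{GDCH}^*(Y_b)$ is generated as a ring by the hyperplane class $h$ together with pushforwards $\iota_*\alpha$ for $\iota:S\hookrightarrow Y_b$ and $\alpha\in \CH^*(S)$. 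Since $S\cong\PP^2$ is embedded by $|\OO(2)|$, we have $\iota^*h=2\ell$ (with $\ell$ the hyperplane class on $\PP^2$), so the projection formula reduces the generators to $\{h,[S]\}$.

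I would then compute all Chow-theoretic relations among $h$ and $[S]$ explicitly, thereby bounding the graded dimensions of $\operatorname{GDCH}^*(Y_b)$. Excess intersection for $S\hookrightarrow Y_b\hookrightarrow \PP^5$, with excess bundle $\OO_S(6\ell)$, yields $\iota_Y^*[S]_{\PP^5}=3h\cdot[S]$; comparing with $[S]_{\PP^5}=4h^3$ in $\CH^3(\PP^5)$ gives the identity $h\cdot[S]=\tfrac{4}{3}h^3$. Using that smooth cubic fourfolds are rationally connected, so $\CH_0(Y_b)=\QQ\cdot\tfrac{1}{3}h^4$, one obtains $h^2\cdot[S]=\tfrac{4}{3}h^4$. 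Finally, the normal bundle sequence $0\to N_{S/Y_b}\to N_{S/\PP^5}\to \OO_S(6\ell)\to 0$ combined with the classical Chern class $c(N_{v_2})=1+9\ell+30\ell^2$ of the Veronese embedding gives $c_2(N_{S/Y_b})=12\ell^2$, whence $[S]^2=4h^4$ by the self-intersection formula. Thus $\operatorname{GDCH}^*(Y_b)$ has graded dimensions at most $(1,1,2,1,1)$. For $Y_b$ very general in the Noether--Lefschetz divisor $\mathcal{C}_{20}$, $\operatorname{Hdg}(\HH^{2*}(Y_b))$ has exactly these dimensions, with $h^2$ and $[S]$ spanning $\operatorname{Hdg}(\HH^4(Y_b))$; the evident surjection $\operatorname{GDCH}^*(Y_b)\twoheadrightarrow \operatorname{Hdg}(\HH^*(Y_b))$ is then forced to be an isomorphism, establishing Franchetta for $\YY\to B$.

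For $\XX\to B$, note that $X_b\to Y_b$ is the blow-up of $Y_b$ along $S$ (codimension~$2$), with exceptional divisor $E_b$ a $\PP^1$-bundle over $S=\PP^2$. The blow-up formula decomposes $\CH^*(X_b)=\pi^*\CH^*(Y_b)\oplus \iota_{E_b*}\pi_b^*\CH^{*-1}(S)$, and since generically defined cycles on $X_b$ come from restrictions of cycles on $M$ (itself the blow-up of $\PP^5$ along $S$), this decomposition descends to $\operatorname{GDCH}^*(X_b)$. The first summand inherits the Franchetta property from $\YY\to B$ via the injectivity of $\pi^*$, while the second summand involves only $\CH^*(\PP^2)$ and so manifestly injects into cohomology; compatibility with the analogous cohomological blow-up decomposition then yields Franchetta for $\XX\to B$. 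The principal technical obstacle is the explicit computation of $[S]^2$, which relies on the Chern classes of the normal bundle of the Veronese; a secondary obstacle is the bookkeeping needed in the $\XX$ case to ensure that the proposed generators exhaust $\operatorname{GDCH}^*(X_b)$ and align with the cohomological blow-up decomposition.
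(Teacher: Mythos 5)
Your proposal is correct and follows essentially the same route as the paper: identify the generically defined cycles on $Y_b$ as the subring generated by $h$ and the classes pushed forward from the Veronese surface (the paper obtains this by pushing down from the projective bundle $\bar\XX\to M$, you by applying the stratified-projective-bundle lemma directly to $\YY\to\PP^5$), establish the relations $h\cdot[S]=\tfrac43 h^3$ and $[S]^2\propto h^4$, and then deduce the case of $\XX\to B$ from the blow-up decomposition of $\CH^*(X_b)$ together with the triviality of the Chow groups of $S\cong\PP^2$. The remaining differences are cosmetic: the paper concludes injectivity directly from the cohomological independence of $h^2$ and $[S]$ (valid for every fibre, with no Hodge-theoretic input about very general members of $\CC_{20}$), your explicit computation of $[S]^2=4h^4$ via the normal bundle is superfluous since $\CH_0(Y_b)\cong\QQ$ already gives the needed proportionality, and for the $\XX$ step you should still record, as the paper does using base-point-freeness of $|3H-E|$ and the projective-bundle argument, that $\operatorname{GDCH}^*(X_b)$ equals the image of $\CH^*(M)\to\CH^*(X_b)$.
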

	
	\begin{proof} Let $\bar{\XX}\to\bar{B}$ 
		denote the family of possibly singular hypersurfaces. Since the linear
		system $|3H-E|$ is base-point free, $p\colon \bar{\XX}\to M$ has the
		structure
		of a
		projective bundle. We first show the following equality\,:
		\begin{equation}\label{univ} \ima\Bigl( \CH^\ast(\bar{\XX})\to
		\CH^\ast(X_b)\Bigr)=\ima\Bigl( \CH^\ast(M)\to \CH^\ast(X_b)\Bigr), \ \ \
		\forall b\in
		B .\end{equation}
		Indeed,  for any $\alpha\in\CH^i(\bar{\XX})$, the projective bundle
		formula yields
		\[  \alpha= \sum_{j=0}^i  p^\ast(a_{i-j})\cdot \xi^j \quad \hbox{in}\
		\CH^i(\bar{\XX}) ,\]
		where $\xi\in\CH^1(\bar\XX)$ is relatively ample with respect to $p$ and
		$a_j\in\CH^{j}(M)$.
		Let $h\in \CH^1(\bar{B})$ be a hyperplane section and let $q\colon \bar{\XX}
		\to \bar{B}$ denote the projection. We have
		$q^\ast(h)=\nu\, \xi+p^\ast(z)$, for some $\nu\in\QQ$ and $z\in \CH^1(M)$. It
		is readily checked that $\nu$ is non-zero. (Indeed, assume for a moment $\nu$
		were zero. Then we would have $q^\ast(h^{\dim\bar{B}})=p^\ast(z^{\dim
			\bar{B}})$
		in $\CH^{\dim\bar{B}}(\bar{\XX})$. But the right-hand side is zero, since
		$\dim\bar{B}>\dim M=5$, while the left-hand side is non-zero.)	
		
		The constant $\nu$ being non-zero, we can write
		\[ \xi= p^\ast(z)+q^\ast(c)\quad \hbox{in}\ \CH^1(\bar{\XX}) ,\]
		where $z\in \CH^1(M)$ and $c\in \CH^1(\bar{B})$ are non-zero elements.
		The restriction of $q^\ast(c)$ to a fiber $X_b$ is zero,
		and so we find that
		\[  {\alpha}\vert_{X_b}=  a_i^\prime\vert_{X_b}\quad \hbox{in}\
		\CH^i(X_b) ,\]
		for some $a_i^\prime\in \CH^i(M)$. This proves the equality
		(\ref{univ}).
		
		Second, we claim that for any $b\in B$, we have the following equality\,:
		\begin{equation}\label{GDCHY}
		\ima\bigl(\CH^{*}(\YY\to Y_{b})\bigr)=\ima\bigl(\CH^{*}(\PP^{5})\to
		\CH^{*}(Y_{b})\bigr)+\ima\bigl(\tau_{b,*}: \CH^{*}(S)\to 
		\CH^{*}(Y_{b})\bigr) ,
		\end{equation}
		where $\tau_{b}: S\to Y_{b}$ is the natural inclusion and the other morphisms
		are natural restriction maps.
		The right-hand side of \eqref{GDCHY} is clearly contained in its left-hand
		side. Let us show the reverse inclusion. Denote by $\pi_b\colon X_b\to Y_b$ the
		restriction of $\pi$ to $X_{b}$, which fits into the following cartesian
		commutative diagram\,:
		\begin{equation}
		\xymatrix{
			E\cup X_{b} \ar@{^{(}->}[r]^-{i_{E}\cup i_{X}}\ar[d]_{(\tau_{b}\circ\pi|_{E})
				\cup \pi_{b}}& M\ar[d]^{\pi}\\
			Y_{b} \ar@{^{(}->}[r]^{i_{Y}}& \PP^{5}.
		}
		\end{equation}
		For any $z\in \ima(\CH^{*}(\YY\to Y_{b}))$,
		$z=\pi_{b,*}(\pi_{b}^{*}(z))=\pi_{b, *}(i_{X}^{*}(\alpha))$ for some $\alpha\in
		\CH^{*}(M)$. Here we used \eqref{univ} and the fact that  $\pi_{b}^{*}(z)\in
		\CH^{*}(X_{b})$ is the restriction of a cycle of $\XX$. By the base-change
		formula, we obtain
		$$i_{Y}^{*}\pi_{*}(\alpha)=\pi_{b, *}(i_{X}^{*}(\alpha))+\tau_{b,*}
		(\pi|_{E})_{*}(i_{E}^{*}(\alpha)),$$
		where the first term on the right-hand side is $z$. The equality
		\eqref{GDCHY} is proven.
		
		Denote as before $\operatorname{GDCH}^{*}(Y_{b}):=\ima(\CH^{*}(\YY\to Y_{b}))$,
		the ``generically defined cycles''. Thanks to \eqref{GDCHY},
		$\operatorname{GDCH}^*(Y_b)$ is generated, as a $\Q$-subalgebra of
		$\CH^{*}(Y_{b})$, by the hyperplane class $h$, the fundamental class of the
		Veronese surface $S$, and the class of a line $l$ in the surface $S$. We need to
		show that the cycle class map restricted to this subalgebra
		$\operatorname{GDCH}^{*}(Y_{b})=\langle h, S, l\rangle$ is injective. 
		
		To this end, we first observe that the generator $l$ is redundant\,:
		$$l:=\tau_{*}(c_{1}(\mathcal{O}_{S}(1)))=\tau_{*}(\frac{1}{2}\tau^{*}(h))=\frac{1}{2}h\cdot
		S.$$ Hence $\operatorname{GDCH}^{*}(Y_{b})=\langle h, S\rangle$. The cohomology
		classes $[S]$ and $[h]^{2}$ being independent, we only need to establish the
		following two relations in $\CH^{*}(Y_{b})$\,:
		\begin{itemize}
			\item $S^{2}$ is proportional to $h^{4}$\,;
			\item $S\cdot h$ is proportional to $h^{3}$.
		\end{itemize}
		The first one follows immediately from the fact that $\CH_{0}(Y_{b})\simeq
		\mathbb{Q}$. For the second one, using the fact that the class of the Veronese
		surface in $\PP^{5}$ is $4c_{1}(\mathcal{O}_{\PP^{5}}(1))^{3}$, we have $$S\cdot
		h=\frac{1}{3}i_{Y}^{*}(i_{Y, *}(S))=\frac{4}{3}h^{3},$$
		where $i_{Y}: Y_{b}\to \PP^{5}$ is the natural inclusion.

		The Franchetta property for $\XX\to B$ now follows from the commutative
		diagram
		$$\xymatrix{	\CH^\ast(\XX)\ar[r] \ar[d]_{{\scriptstyle \pi_\ast}} &
			\CH^\ast(X_b) \ar[d]_ {\scriptstyle (\pi_b)_\ast}& \hspace{-25pt }\cong
			\CH^\ast(Y_b)\oplus \CH^{\ast-1}(S)\\
			\CH^\ast(\YY) \ar[r] & \CH^\ast(Y_b)
		}$$
		plus the fact that the Veronese surface $S\cong\PP^2\subset Y_b$ has
		trivial
		Chow groups.
	\end{proof}

	\begin{proof}[Proof of Theorem \ref{main3}] 
		To prove that K\"uchle fourfolds of type $c7$ have an MCK decomposition, it
		suffices, by specialization, to show that the generic K\"uchle fourfold of
		type $c7$ has an MCK
		decomposition with universally defined projectors. The generic fourfold of
		type $c7$ is a
		blow-up of a cubic fourfold as above, and so to establish an MCK, it suffices
		to check that the
		hypotheses of the ``blow-up'' part of
		Proposition \ref{prop:SVmachine} are met with. The second, third and fourth
		hypotheses
		follow from Theorem \ref{thm:MCKcubic}, Remark \ref{rmk:ChernCubic}, and the
		fact that
		$S\cong\PP^2$. The only thing that needs checking is the first hypothesis,
		\emph{i.e.},
		that the graph of the inclusion morphism $\tau\colon S\to Y$ lies in
		$\CH^\ast(S\times Y)_{(0)}$.

		Clearly, 
		the inclusion
		morphism is universally defined (\emph{i.e.}, it exists as a relative morphism
		$\tau$
		over the base $B$). Since
		\[ \Gamma_{\tau_b}\ \ \in \CH^4(S\times Y_b)\cong \bigoplus_{j=2}^4
		\CH^j(Y_b)
		\]
		and since the MCK decomposition for $S\times Y_b$ is universally defined
		over the
		base $B$, it follows from Lemma \ref{fp} that
		\[  (\pi^j_{S\times Y_b})_\ast (\Gamma_{\tau_b})=0\quad \hbox{in}\
		\CH^4(S\times Y_b)\ \ \ \forall j\not= 8,\]
		\emph{i.e.}, $\Gamma_{\tau_b}\in \CH^4(S\times Y_b)_{(0)}$, as desired.
		
		Finally, the statement about the Chern classes $c_j(X)$ follows from Lemma
		\ref{fp}, in view of the fact that the Chern classes of $X$ and the MCK
		decomposition of $X$ are universally defined.
	\end{proof}

	\begin{rmk}\label{r1}The link
		between K\"uchle $c7$ and cubic fourfolds is interesting also on the cubic
		fourfolds side. Indeed, thanks to the relation with K\"uchle $c7$ we were able
		to show (Lemma \ref{fp}) the Franchetta property for the Hassett divisor
		$\CC_{20}$ (parametrizing cubic fourfolds containing a Veronese surface).	
		More generally, one can ask whether all Hassett divisors $\CC_d$ have the
		Franchetta property. For those values of $d$ where there is an associated K3
		surface, this is equivalent to O'Grady's generalized Franchetta conjecture for
		K3 surfaces. For other values of $d$ (such as $d=20$), the expected answer is
		not so clear.
	\end{rmk}

	\section{Todorov surfaces}\label{S:todorov}
	
	We exhibit examples of regular surfaces with ample canonical class and with
	cohomology of K3 type that admit a multiplicative Chow--K\"unneth
	decomposition,
	namely Todorov surfaces of type $(0,9)$ and $(1,10)$. For that purpose we first
	state a general criterion for lifting MCK decompositions along dominant
	morphisms of regular surfaces. (Criteria for descending an MCK decomposition
	along a generically finite
	morphism were given  and used in \cite{SV, SV2} to show that MCK decompositions
	were stable under the operation of taking the Hilbert scheme of length-$2$ or
	$3$ subschemes.) We then define Todorov surfaces and check that
	for those of type $(0,9)$ and $(1,10)$ all the hypotheses of the general
	criterion are met with. As a by-product we obtain a Franchetta-type result for
	those Todorov surfaces.
	
	\subsection{A general criterion for lifting MCK decompositions on regular
		surfaces} Let $p: S\to T$ be a dominant morphism between regular surfaces. In
	Remark~\ref{R:MCKdescend}, we saw that an MCK decomposition on $S$ descends to
	give an MCK decomposition on $T$. In general, one cannot infer an MCK
	decomposition on $S$ from an MCK decomposition on $T$. (Consider indeed the
	blow-up of a K3 surface at a very general point). The following proposition
	provides sufficient conditions for an MCK decomposition on $T$ to lift to an
	MCK
	decomposition on $S$ and will be used to construct such a decomposition on
	certain Todorov surfaces.
	
	\begin{prop}[Lifting MCK decompositions]\label{P:criterionMCK}
		Let $\TT \to B$ and $\Ss \to B$ be relatively flat and projective surfaces
		over
		a smooth quasi-projective complex variety $B$. For any $b\in B$, denote by
		$T_{b}$ and $S_{b}$ the respective fibers. Assume that for all $b\in B$,
		$S_{b}$
		and $T_{b}$ are regular surfaces which are smooth or finite-group quotients of
		smooth surfaces. Let $p:
		\Ss \to \TT$ be a dominant morphism over $B$. Assume the following
		conditions\,:
		\begin{enumerate}[(i)]
			\item (Franchetta property for $\TT/B$)  There exists a cycle $c\in
			\CH^2(\TT)$
			which is  fiberwise of degree~1 such that for any $\alpha \in \CH^2(\TT)$ and
			for
			any $b\in B$, we have $\alpha_b \in \Q c_b$\,;  
			\item The relative CK decomposition $\pi^0_\TT = c\times_B \TT$, $\pi^4_\TT =
			\TT \times_B c$, $\pi^2_\TT = \Delta_{\TT/B} - \pi^0_\TT  - \pi^4_\TT $ is
			fiberwise an MCK decomposition\,; in other words, the modified small diagonal
			vanishes $\Gamma_{3}(T_{b}, c_{b})=0$ for any $b\in B$ (see Proposition
			\ref{prop:MCKsurface}).
			\item For any $b\in B$, the pushforward $p_* : \CH^{2}(S_{b}) \to
			\CH^{2}(T_{b})$ is an isomorphism. 
		\end{enumerate}
		Then, denoting $c' = \frac{1}{\deg(p)} p^*c$, the set   $\{\pi^0_\Ss =
		c'\times_B \Ss, \pi^4_\Ss =  \Ss \times_B c', \pi^2_\Ss= \Delta_{\Ss/B} -
		\pi^0_\Ss  - \pi^4_\Ss\}  $ defines a relative CK decomposition for $\Ss$
		which
		is fiberwise an MCK decomposition. Moreover, the Franchetta property holds for
		the family $\Ss \to B$.
	\end{prop}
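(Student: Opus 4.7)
The plan is to establish in sequence that $\{\pi^0_\Ss,\pi^2_\Ss,\pi^4_\Ss\}$ defines a relative Chow--K\"unneth decomposition, that the Franchetta property holds for $\Ss/B$, and that the decomposition is fiberwise an MCK. The first two assertions are rather direct consequences of hypotheses~(i) and~(iii); the fiberwise MCK is the delicate step.

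First, $c'$ is fiberwise a $0$-cycle of degree~$1$: by the projection formula, $p_*(c') = \tfrac{1}{\deg(p)} p_*p^*(c) = c$, which has degree~$1$ on each fiber. The Chow--K\"unneth axioms for $\{\pi^0_\Ss,\pi^2_\Ss,\pi^4_\Ss\}$ then follow formally on each regular surface fiber $S_b$, as reviewed in~\S\ref{S:surfaces}, using $\HH^1(S_b)=\HH^3(S_b)=0$ by regularity. For the Franchetta property of $\Ss/B$, injectivity in codimension~$1$ is automatic from $\Pic^0(S_b)=0$, and codimensions~$\geq 3$ are trivial on a surface. In codimension~$2$, given $\alpha\in\CH^2(\Ss)$ and $b\in B$, naturality of pushforward yields $p_*(\alpha_b)=(p_*\alpha)_b$, which lies in $\Q c_b$ by hypothesis~(i) applied to $p_*\alpha\in\CH^2(\mathcal{T})$; then by~(iii) the isomorphism $p_*\colon\CH^2(S_b)\xrightarrow{\sim}\CH^2(T_b)$ sends $c'_b$ to $c_b$, forcing $\alpha_b\in\Q c'_b$.

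For the fiberwise MCK property, by Proposition~\ref{prop:MCKsurface} it suffices to show $\Gamma_3(S_b,c'_b)=0$ in $\CH^4(S_b^3)$. The formula of Remark~\ref{R:MCKdescend} applied to $p\times p\times p$ yields
\[
(p,p,p)_*\Gamma_3(S_b,c'_b)=\deg(p)\cdot\Gamma_3(T_b,p_*c'_b)=\deg(p)\cdot\Gamma_3(T_b,c_b)=0
\]
by hypothesis~(ii). The main obstacle is that $(p,p,p)_*$ is generally far from injective on $\CH^4(S_b^3)$, so this alone does not yield $\Gamma_3(S_b,c'_b)=0$. To bridge this gap, I would view the modified small diagonal as a correspondence $\h(S_b)\otimes\h(S_b)\to\h(S_b)$ and apply Manin's identity principle: combining hypothesis~(iii), which says that codimension-$2$ cycles on $S_b$ are faithfully detected by their pushforward to $T_b$, with the Franchetta property for $\Ss/B$ just established and the universally defined nature of the relative cycle $\Gamma_3(\Ss,c')$, one transfers the vanishing of $\Gamma_3(T_b,c_b)$ (encoding MCK for $T_b$) to the desired vanishing of $\Gamma_3(S_b,c'_b)$.
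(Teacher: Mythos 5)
The first two steps of your proposal are fine and agree with the paper: $c'$ is fiberwise of degree $1$, the Chow--K\"unneth axioms are formal for a regular surface, and your derivation of the Franchetta property for $\Ss/B$ (push $\alpha_b$ down, use (i), then use injectivity of $p_*$ on $\CH^2(S_b)$, equivalently that $\tfrac{1}{\deg p}p^*$ is the inverse) is exactly the paper's argument. The computation $(p,p,p)_*\Gamma_3(S_b,c'_b)=\deg(p)\,\Gamma_3(T_b,c_b)=0$ is also correct. But the crucial step --- deducing multiplicativity on $S_b$ from multiplicativity on $T_b$ --- is only gestured at, and as written it is a genuine gap. Invoking ``Manin's identity principle'' does not bridge it: that principle would require controlling the action of $\Gamma_3(S_b,c'_b)$ (as a correspondence) on Chow groups of $S_b\times W$ for arbitrary $W$, whereas your hypotheses only give you the vanishing of its pushforward to $T_b^3$ together with injectivity of $p_*$ on $\CH^2(S_b)$; hypothesis (iii) says nothing directly about $\CH^4(S_b^3)$, and $(p,p,p)_*$ is, as you note, far from injective there. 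Nothing in your sketch explains how the ``transfer'' is actually effected.

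What the paper does at this point is substantive and is the real content of the proposition. One splits $\h(S_b)=\h(S_b)^{\mathrm{inv}}\oplus\h(S_b)^{\mathrm{coinv}}$ using the projector $\tfrac{1}{d}p^*p_*$ (with $d=\deg p$); then $p^*$ identifies $\h(T_b)\simeq\h(S_b)^{\mathrm{inv}}$ as \emph{algebra} objects, so hypothesis (ii) gives multiplicativity on that summand. Hypothesis (iii) forces $\CH^2(\h(S_b)^{\mathrm{coinv}})=0$ (and $\CH^0$ vanishes too), so by a Bloch--Srinivas argument (Lemma~\ref{lemma:Tatemotive}) the coinvariant part is a sum of Lefschetz motives $\1(-1)^{\oplus r}$. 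Finally one checks multiplicativity summand by summand: the products $\h(S_b)^{\mathrm{coinv}}\otimes\h(S_b)^{\mathrm{coinv}}\to\h(S_b)^{\mathrm{inv}}$ are encoded by finitely many $0$-cycles which are generically defined, hence multiples of $c'_b$ by the Franchetta property for $\Ss/B$, so they land in $\h^4(S_b)$; and the products $\h^2(S_b)^{\mathrm{inv}}\otimes\h(S_b)^{\mathrm{coinv}}\to\h(S_b)^{\mathrm{coinv}}$ are encoded by generically defined $0$-cycles of degree $0$, which vanish by the same Franchetta property (this is done at a very general $b$, whence all $b$ by specialization). Your proposal contains none of these ingredients --- neither the invariant/coinvariant splitting, nor the identification of the coinvariant part with Tate motives, nor the use of Franchetta to kill the mixed terms --- so the proof is incomplete at precisely the step where the hypotheses (i)--(iii) must interact.
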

	
	\begin{proof}
		Intersection theory with rational coefficients extends naturally to quotients
		of smooth varieties by finite groups.
		Denote by $d:=\deg(p)$.
		First of all, note that the inverse of the isomorphism in $(iii)$ is obviously
		$\frac{1}{d}p^{*}$ by the projection formula. Then we can establish the
		Franchetta property for the family $\Ss/B$\,: for any $\alpha\in \CH^{2}(\Ss)$
		and any $b\in B$, by $(i)$, $p_{*}(\alpha_{b})=mc_{b}$ in $\CH_{0}(T_{b})$ for
		some $m\in \Q$. Therefore,

		$$\alpha_{b}=\frac{1}{d}p^{*}(p_{*}(\alpha_{b}))=\frac{1}{d}p^{*}(mc_{b})=mc_{b}'\in
		\Q c_{b}'.$$
		To show the multiplicativity of the Chow--K\"unneth decomposition for the
		fibers of $\Ss$, we need to reinterpret it as follows. The relative Chow
		motive
		of $\Ss/B$ splits as 
		$$\h(\Ss) = \h(\Ss)^{\mathrm{inv}} \oplus \h(\Ss)^{\mathrm{coinv}},$$
		where $\h(\Ss)^{\mathrm{inv}} = \frac{1}{d}\, p^*p_* \h(\Ss)$ and
		$\h(\Ss)^{\mathrm{coinv}} = 
		\big( \Delta_{\Ss/B} -  \frac{1}{d}\, p^*p_*\big) \h(\Ss)$. 
		For any $b\in B$,  $\h(S_{b})^{\mathrm{inv}}$ forms a subalgebra object of
		$\h(S_{b})$, because $\frac{1}{d}\, p^*p_* \circ \delta_{S_{b}} \circ (
		\frac{1}{d}\, p^*p_* \otimes  \frac{1}{d}\, p^*p_*) = \delta_{S_{b}} \circ (
		\frac{1}{d}\, p^*p_* \otimes  \frac{1}{d}\, p^*p_*)$. Moreover $p^{*}$ induces
		an isomorphism of algebra objects\,: 
		\begin{equation}\label{eqn:T=Sinv}
		\h(T_{b})\simeq \h(S_{b})^{\mathrm{inv}}.
		\end{equation} 
		In particular, $p_{*}: \CH^{2}(\h(S_{b})^{\mathrm{inv}})\to
		\CH^{2}(\h(T_{b}))$
		is an isomorphism. The assumption $(iii)$ implies that for any $b\in B$,
		$\CH^{2}(\h(S_{b})^{\mathrm{coinv}})=0$. It is also clear that
		$\CH^{0}(\h(S_{b})^{\mathrm{coinv}})=0$. By Lemma~\ref{lemma:Tatemotive}
		below,
		$\h(S_{b})^{\mathrm{coinv}}$ is isomorphic to $\1(-1)^{r}$ for some $r\in \N$.
		Note that the number $r$ may vary with~$b$, but the minimal value is attained
		by
		a very general point $b$. 
		
		Now it is easy to see that fiberwise the relative Chow--K\"unneth
		decomposition
		for $\Ss/B$ is given as $\h^{i}(S_{b})=\h^{i}(T_{b})$ for $i=0$ and $4$, via
		the
		isomorphism \eqref{eqn:T=Sinv}, while $\h^{2}(S_{b})=\h^{2}(T_{b})\oplus
		\h(S_{b})^{\mathrm{coinv}}$. Let us show that it is multiplicative. Let $b$ be
		a very
		general point of $B$.
		
		$\bullet$ By the isomorphism \eqref{eqn:T=Sinv} and the assumption $(ii)$,
		this
		decomposition is multiplicative on the subalgebra summand
		$\h(S_{b})^{\mathrm{inv}}$.

		$\bullet$ By construction, the intersection product sends
		$\h(S_{b})^{\mathrm{coinv}}\otimes \h(S_{b})^{\mathrm{coinv}}$ to
		$\h(S_{b})^{\mathrm{inv}}$ and we need to show that this map lands in
		$\h^{4}(S_{b})$. As $\h(S_{b})^{\mathrm{coinv}}\simeq \1(-1)^{r}$, the
		morphism
		$\h(S_{b})^{\mathrm{coinv}}\otimes \h(S_{b})^{\mathrm{coinv}}\to
		\h(S_{b})^{\mathrm{inv}}$ is given by $r^{2}$ elements of
		$\CH^{2}(\h(S_{b})^{\mathrm{inv}})$. Clearly each of these 0-cycles is the
		restriction to the fiber of an element of $\CH^{2}(\Ss)$. By the Franchetta
		property established in the beginning of the proof, this element is fiberwise
		a
		multiple of $c'_{b}$. We conclude by the definition of $\h^{4}(S_{b})$.

		$\bullet$ Similarly, the intersection product sends
		$\h(S_{b})^{\mathrm{inv}}\otimes \h(S_{b})^{\mathrm{coinv}}$ to
		$\h(S_{b})^{\mathrm{coinv}}$. From the fact that $\h(S_{b})^{\mathrm{coinv}}$
		is
		a direct sum of Lefschetz motives, one sees immediately that the intersection
		product sends $\h^{0}(S_{b})\otimes \h(S_{b})^{\mathrm{coinv}}$ to
		$\h(S_{b})^{\mathrm{coinv}}$ (the fundamental class
		is a unit for the algebra structure on the motive of a variety)
		and sends $\h^{4}(S_{b})\otimes \h(S_{b})^{\mathrm{coinv}}$ to
		zero.
		It remains to
		show that the intersection product sends $\h^{2}(S_{b})^{\mathrm{inv}}\otimes
		\h(S_{b})^{\mathrm{coinv}}$ to zero. To this end, we use again that
		$\h(S_{b})^{\mathrm{coinv}}\simeq \1(-1)^{r}$ to see that the morphism
		$\h^{2}(S_{b})^{\mathrm{inv}}\otimes \h(S_{b})^{\mathrm{coinv}}\to
		\h(S_{b})^{\mathrm{coinv}}$ is given by $r^{2}$ elements of
		$\Hom(\h^{2}(S_{b})^{\mathrm{inv}}, \1)$, which is a subspace of
		$\CH^{2}(S_{b})_{\deg=0}$. Similarly as in the previous item, each of these
		0-cycles is the restriction 
		to the fiber of an element of $\CH^{2}(\Ss)$ and the Franchetta property for
		$\Ss/B$ tells us that these elements are zero.
	\end{proof}
	
	The following easy lemma, which is a special case of \cite[Cor.~2.2]{Vial} in
	the case $\pi ={}^t\pi$, 
	is used in the proof of the previous proposition. We provide a proof for the
	sake of completeness.
	\begin{lem}\label{lemma:Tatemotive}
		Let $S$ be a smooth projective regular surface and $\pi\in \CH^{2}(S\times S)$
		be a projector. If the Chow motive $M:=(S, \pi)$ has vanishing $\CH^{2}$ and
		$\CH^{0}$, then $M$ is isomorphic to a direct sum of copies of the Lefschetz
		motive $\1(-1)$.
	\end{lem}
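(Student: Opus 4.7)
The plan is to identify $M$ with a direct summand of the Picard motive $(S,\pi_{\mathrm{NS}})\cong\1(-1)^{\oplus\rho}$, where $\rho$ is the Picard number of $S$. The conclusion $M\cong\1(-1)^{\oplus r}$ then follows since any direct summand of $\1(-1)^{\oplus\rho}$ in $\MM_{\rm rat}$ is of this form (idempotents of $\End(\1(-1)^{\oplus\rho})\cong M_\rho(\QQ)$ having images determined up to isomorphism by their rank).

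First I would fix a $0$-cycle $z\in\CH^2(S)$ of degree $1$ and use the associated self-dual Chow--K\"unneth decomposition $\h(S)=\1\oplus\h^2(S)\oplus\1(-2)$ with projectors $\pi^0=z\times 1_S$, $\pi^4=1_S\times z$ and $\pi^2=\Delta_S-\pi^0-\pi^4$. Factoring $\pi^0=i_0\circ p_0$ through $\1$ and $\pi^4=i_4\circ p_4$ through $\1(-2)$, the hypotheses $\CH^0(M)=0$ and $\CH^2(M)=0$ translate respectively into $\pi_*(1_S)=0\in\CH^0(S)=\QQ$ and $\pi_*(z)=0\in\CH^2(S)$, and hence into $\pi\circ i_0=0$ and $\pi\circ i_4=0$. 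Post-composing with $p_0$ and $p_4$ yields $\pi\circ\pi^0=0$ and $\pi\circ\pi^4=0$; in particular $\pi=\pi\circ\pi^2$, so that $\pi$ factors through $\h^2(S)$ on the right.

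Next I would construct the Picard projector. By regularity of $S$, $\CH^1(S)=\mathrm{NS}(S)_\QQ$ is finite-dimensional, and by the Hodge index theorem the intersection pairing on it is non-degenerate. Choosing a basis $\{D_1,\ldots,D_\rho\}$ and its dual basis $\{D_1^\vee,\ldots,D_\rho^\vee\}$, the class $\pi_{\mathrm{NS}}:=\sum_i D_i\times D_i^\vee\in\CH^2(S\times S)$ is a projector, and the morphisms $f=(D_i)_i:\1(-1)^{\oplus\rho}\to\h(S)$ and $g=(D_i^\vee)_i:\h(S)\to\1(-1)^{\oplus\rho}$ satisfy $g\circ f=\mathrm{id}$ and $f\circ g=\pi_{\mathrm{NS}}$, exhibiting $(S,\pi_{\mathrm{NS}})\cong\1(-1)^{\oplus\rho}$.

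The main step is to establish the identity $\pi\circ\pi_{\mathrm{NS}}\circ\pi=\pi$ in $\CH^2(S\times S)$. Granted this, the morphisms $\phi:=\pi\circ f:\1(-1)^{\oplus\rho}\to M$ and $\psi:=g\circ\pi:M\to\1(-1)^{\oplus\rho}$ satisfy $\phi\circ\psi=\pi=\mathrm{id}_M$, realizing $M$ as a direct summand of $\1(-1)^{\oplus\rho}$ and hence completing the proof. Writing $\Delta_S-\pi_{\mathrm{NS}}=\pi^0+\pi^4+\pi^{2,\mathrm{tr}}$ with $\pi^{2,\mathrm{tr}}:=\pi^2-\pi_{\mathrm{NS}}$, the desired identity reduces via the first step to the vanishing $\pi\circ\pi^{2,\mathrm{tr}}\circ\pi=0$. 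On Chow groups of $S$ this vanishing is immediate: $\pi^{2,\mathrm{tr}}$ acts trivially on $\CH^0(S)$ and $\CH^1(S)$, and sends $\CH^2(S)$ into $\CH^2(S)_{\mathrm{hom}}$, which is annihilated by $\pi_*$. The hard part is upgrading this to a vanishing of cycle classes in $\CH^2(S\times S)$; this is achieved using the formulas $\alpha\circ(D\times E)=D\times\alpha_*(E)$ and $(D\times E)\circ\alpha=({}^t\alpha)_*(D)\times E$ (for divisor classes $D,E$) to express $\pi\circ\pi_{\mathrm{NS}}\circ\pi$ as an explicit sum supported in $\CH^1(S)\otimes\CH^1(S)$, and matching this with $\pi$ via the non-degeneracy of the intersection pairing on $\mathrm{NS}(S)_\QQ$. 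In the self-dual case $\pi={}^t\pi$, which is the setting of \cite[Cor.~2.2]{Vial} and which suffices for the intended application (since the projector $\Delta_{S_b}-\frac{1}{d}p^*p_*$ appearing in the proof of Proposition~\ref{P:criterionMCK} is manifestly self-dual), this matching becomes a short direct computation.
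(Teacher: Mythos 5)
Your reduction steps are fine: from $\CH^0(M)=0$ and $\CH^2(M)=0$ you correctly get $\pi\circ\pi^0=\pi\circ\pi^4=0$, hence $\pi=\pi\circ\pi^2$, and it is also true that once the identity $\pi\circ\pi_{\mathrm{NS}}\circ\pi=\pi$ is known, $M$ becomes a retract of $\1(-1)^{\oplus\rho}$ and the conclusion follows. The gap is that this identity is never actually proved, and it is not a technical detail: $\pi\circ\pi_{\mathrm{NS}}\circ\pi=\pi$ (equivalently $\pi\circ\pi^{2,\mathrm{tr}}\circ\pi=0$ in $\CH^2(S\times S)$) is \emph{equivalent} to the lemma, since it says precisely that $\pi$ lies in the span of products of divisor classes. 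Your proposed mechanism cannot deliver it: the formulas $\alpha\circ(D\times E)=D\times\alpha_*(E)$ and $(D\times E)\circ\alpha=({}^t\alpha)_*(D)\times E$ compute $\pi\circ\pi_{\mathrm{NS}}\circ\pi$ as $\sum_i({}^t\pi)_*(D_i)\times\pi_*(D_i^\vee)$, a cycle visibly in $\operatorname{Im}\bigl(\CH^1(S)\otimes\CH^1(S)\to\CH^2(S\times S)\bigr)$, but ``matching this with $\pi$'' presupposes exactly what has to be shown, namely that $\pi$ itself lies in that image. Everything you use about $\pi$ (that $\pi_*(1_S)=0$, $\pi_*(z)=0$, $\pi_*$ kills $\CH^2(S)_{\hom}$, $\pi^{2,\mathrm{tr}}_*=0$ on $\CH^{\le 1}(S)$) concerns only the action of $\pi$ on the Chow groups of $S$ over the base field, and no purely formal manipulation of correspondences upgrades such action statements to an equality of classes in $\CH^2(S\times S)$; self-duality $\pi={}^t\pi$ does not change this (it only symmetrizes the explicit sum), and restricting to the self-dual case anyway weakens the statement as given, which is for an arbitrary projector.

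The missing ingredient is a decomposition-of-the-diagonal argument, and this is how the paper proceeds: $\CH^2(M)=\pi_*\CH^2(S)=0$ means $\pi$ restricts to zero on every fiber $\{x\}\times S$, so by Bloch--Srinivas \cite{BS} (a genuinely geometric step, using the generic point of the first factor and that we work over $\C$) the class $\pi$ is supported on $D\times S$ for some divisor $D$; regularity of $S$ then forces $\pi=\sum_i D_i\times D_i'+\gamma\times S$ with $D_i,D_i'$ divisors and $\gamma$ a $0$-cycle, after which the projector equation and $\CH^0(M)=0$ eliminate the term $\gamma\times S$, and $\pi=\sum_i D_i\times D_i'$ gives $M\cong\1(-1)^{\oplus r}$ exactly as in your last step. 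So to repair your argument you must insert this (or an equivalent input, e.g.\ simply citing \cite[Cor.~2.2]{Vial}, whose proof contains such an argument, in the self-dual case) at the point where you claim $\pi\circ\pi^{2,\mathrm{tr}}\circ\pi=0$; once $\pi$ is known to be a sum of products of divisors, that vanishing is immediate, but without it your ``short direct computation'' has nothing to compute against.
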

	\begin{proof}
		The condition $\CH^{2}(M)=0$ implies that for any $x\in S$, the restriction of
		$\pi$  to the fiber $\{x\}\times S$ is zero in $\CH^{2}(S)$. By the
		Bloch--Srinivas decomposition of diagonal theorem~\cite{BS}, there exists a
		divisor $D$ of $S$, such that $\pi$ is supported in $D\times S$. As $S$ is
		assumed to be regular, $\pi$ must be of the form 
		$$\pi=\sum_{i}D_{i}\times D'_{i}+\gamma\times S,$$ for some divisors $D_{i},
		D'_{i}$ in $S$ and some 0-cycle $\gamma$ on $S$. 
		Using the fact that $\pi$ is a projector, it is of the form
		$$\pi=\sum_{i}D_{i}\times D'_{i}+\deg(\gamma)\gamma\times S.$$
		Now the hypothesis that $\CH^{0}(M)=0$ implies that $\deg(\gamma)=0$, hence
		$\pi=\sum_{i}D_{i}\times D'_{i}$, that is, $M$ is a direct sum of copies of
		$\1(-1)$.
	\end{proof}

	\subsection{Todorov surfaces}
	\label{sst}

	\begin{defn}[\cite{Mor3}, \cite{Tod2}]\label{tod}
		A {\em Todorov surface\/} is a
		canonical surface $S$ (\emph{i.e.}, a projective surface with only rational
		double
		points as singularities and with $K_S$ ample) with $p_g(S)=1$, $q=0$, and such
		that the bicanonical map $\phi_{2K_S} : S \dashrightarrow \PP^r$ fits into a
		commutative diagram 
		$$\xymatrix{S \ar[dr] \ar@{-->}[ddr]_{\phi_{2K_S}} \ar[rr]^\iota & & S \ar[dl]
			\ar@{-->}[ddl]^{\phi_{2K_S}} \\
			& S/\iota \ar@{-->}[d] & \\
			& \PP^r &
		}$$
		where $\iota\colon S\to S$ is an involution for which $S/\iota$ is birational
		to a K3 surface.
		The K3 surface obtained by resolving the singularities of $S/\iota$ will be
		called the K3 surface {\em associated to $S$\/}.
	\end{defn}
	
	\begin{defn}[\cite{Mor3}]\label{fi} The {\em fundamental invariants\/} of a
		Todorov surface $S$ are $(\alpha,k)$, where $\alpha$ is such that the
		$2$-torsion subgroup of $\hbox{Pic}(S)$ has order $2^\alpha$, and $k=K_S^2+8$.
		(The definition of $k$ is explained as the number of rational double points on a
		so-called ``distinguished partial
		desingularization'' of $S/\iota$\,; see \cite[Theorem~5.2(\rom2)]{Mor3}.)
	\end{defn}

	\begin{rmk}\label{remtod}
		By Morrison \cite[p.335]{Mor3}, there are exactly
		$11$ irreducible families of Todorov surfaces, corresponding to the
		$11$ possible values of the fundamental invariants\,:
		\[ \begin{split}(\alpha,k)\in\ \ \Bigl\{ (0,9),(0,10),&(0,11),(1,10),(1,11),\\
		&(1,12),(2,12),(2,13),(3,14),(4,15),(5,16)\Bigr\}\ .\\
		\end{split}\]
		For most of these families (in particular for types $(0,9)$ and $(1,10)$ which
		are of interest to us), examples of smooth members are given in \cite{Tod2}.
	\end{rmk}

	\subsection{The Franchetta property for Todorov surfaces of type $(0,9)$ and
		$(1,10)$}
	
	\subsubsection{Explicit descriptions}
	We now restrict attention to Todorov surfaces $S$ with fundamental invariants
	$(0,9)$ and $(1,10)$. This means that either $K_S^2=1$, or $K_S^2=2$ and the
	fundamental group of $S$ is $\ZZ/2\ZZ$ (\cite[Theorem 2.11]{CD}).
	In these two cases, there happens to be a nice explicit description of $S$ in
	terms of complete intersections in weighted projective spaces.
	
	\begin{thm}[{Catanese \cite[Theorem 3]{Cat}}, {Todorov \cite[Theorem
			6]{Tod}}]\label{ct} Let $S$
		be a Todorov surface with fundamental invariants $(0,9)$. Then
		$S_{}$ is isomorphic to a
		complete intersection of two degree-6 hypersurfaces in the weighted projective
		space
		$\PP:=\PP(1,2,2,3,3)$, invariant under the involution
		\[ \begin{split} \iota\colon\ \ \PP\ &\to\ \PP\ ,\\
		[x_0,x_1,\ldots,x_4]\ &\mapsto\  [-x_0,x_1,\ldots,x_4]\ .\\
		\end{split}\]   
		Moreover, the involution $\iota$ of $\PP$ restricts to the involution of
		Definition~\ref{tod} on $S$.
		
		Conversely, a weighted complete intersection as above with only rational
		double points as singularities is isomorphic to
		a Todorov surface with fundamental invariants $(0,9)$.
	\end{thm}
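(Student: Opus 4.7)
The strategy is to reconstruct $S$ from its canonical ring $R(S,K_S) = \bigoplus_n H^0(S, nK_S)$ and to read off the Todorov involution from the eigenspace decomposition for the branched double cover $\pi \colon S \to T := S/\iota$. First, for $S$ with $p_g = 1$, $q = 0$ and $K_S^2 = 1$, Noether's formula gives $\chi(\OO_S) = 2$, and Riemann--Roch with Kodaira vanishing yields
\[
P_1 = 1, \qquad P_n = 2 + \tbinom{n}{2}\ \text{ for } n \ge 2,
\]
so $(P_1, \ldots, P_6) = (1, 3, 5, 8, 12, 17)$. A graded dimension count then forces one generator $x_0$ in degree $1$, two new generators $x_1, x_2$ in degree $2$, and two new generators $x_3, x_4$ in degree $3$, yielding a surjection $\Q[x_0, \ldots, x_4] \twoheadrightarrow R(S, K_S)$ with weights $(1, 2, 2, 3, 3)$. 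Comparing the $19$ weighted degree-$6$ monomials with $P_6 = 17$ produces exactly two relations $F_1, F_2$ of degree $6$\,; matching the full Poincar\'e series
\[
\sum_{n \ge 0} P_n t^n = \frac{(1 - t^6)^2}{(1 - t)(1 - t^2)^2 (1 - t^3)^2}
\]
confirms these are the only relations, realizing $S \cong \Proj R(S, K_S)$ as a complete intersection of two sextics in $\PP(1, 2, 2, 3, 3)$.

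To identify the involution, I would use the structure of the branched double cover. Since $T$ is birational to a K3 surface with rational double points, $\omega_T \cong \OO_T$, and the projection formula expresses $\pi_\ast \omega_S^n$ as a rank-$2$ $\OO_T$-module with $\iota$-eigenspace summands of the form $\omega_T^n \otimes L^n$ and $\omega_T^n \otimes L^{n-1}$, where $L$ is the defining line bundle of the double cover. The assignment of summands to $\iota$-eigenvalues depends on the parity of $n$ and is read off from the standard local analysis at the ramification divisor. Combined with the vanishing $h^0(L) = p_g(S) - h^0(\OO_T) = 0$, this determines the dimensions of $H^0(nK_S)^{\pm\iota}$ for $n = 1, 2, 3$, and hence fixes the sign of $\iota$ on each generator $x_0, \ldots, x_4$ up to the $\C^\ast$-rescaling $\lambda = -1$ on weighted coordinates. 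After this harmless rescaling, the involution takes the stated form $[x_0, \ldots, x_4] \mapsto [-x_0, x_1, \ldots, x_4]$, and the factorization of the bicanonical map through $S/\iota$ matches the Todorov involution of Definition~\ref{tod}.

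Conversely, for a complete intersection $S$ of two $\iota$-invariant sextics in $\PP(1, 2, 2, 3, 3)$ with at worst rational double points, adjunction gives $K_S = \OO_S(1)$, hence $K_S^2 = \tfrac{6 \cdot 6}{1 \cdot 2 \cdot 2 \cdot 3 \cdot 3} = 1$ and $p_g(S) = h^0(\OO_S(1)) = 1$, while the Koszul resolution of $S$ in $\PP$ yields $q(S) = 0$. The involution $\iota$ restricts to $S$\,; its fixed divisor $\{x_0 = 0\} \cap S$ is linearly equivalent to $K_S$, so $\pi^\ast K_{S/\iota} = K_S - R \equiv 0$, whence $S/\iota$ has numerically trivial canonical class and is birational to a K3, and the Lefschetz hyperplane theorem for weighted complete intersections gives $\Pic(S)[2] = 0$, confirming $\alpha = 0$. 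The main obstacle throughout is the bicanonical and tricanonical analysis showing that $R(S, K_S)$ is generated in degrees $\le 3$---this is the technical heart of Catanese's original argument and requires base-point freeness of $|3K_S|$ together with surjectivity of the multiplication maps $H^0(K_S) \otimes H^0(nK_S) \to H^0((n+1)K_S)$ for $n \ge 3$---together with the careful bookkeeping needed to identify the involution via the branched double cover formula.
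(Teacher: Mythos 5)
First, a point of calibration: the paper does not prove Theorem~\ref{ct} at all --- it is imported verbatim from Catanese \cite[Theorem 3]{Cat} and Todorov \cite[Theorem 6]{Tod} --- so the benchmark is their classical argument, and your outline does follow that route: compute $\chi(\OO_S)=2$, $P_n=2+\binom{n}{2}$, deduce generators of the canonical ring in weights $(1,2,2,3,3)$ and two sextic relations, and identify $S$ (which is a canonical surface by Definition~\ref{tod}, hence equal to its canonical model) with $\Proj R(S,K_S)$. You correctly flag that the Hilbert-series count alone proves nothing: one needs generation in degrees $\leq 3$, base-point-freeness statements, and the fact that the two relations form a regular sequence, which is the actual content of Catanese's theorem. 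As written, that part is a plan rather than a proof, but it is the right plan.

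The genuine gap is in the identification of the involution, which is precisely the part of the statement this paper actually uses (Notation~\ref{not}(i) needs $F,G$ to contain $x_0$ only in even degrees). Your eigenspace bookkeeping rests on the smooth-branched-double-cover formula $\pi_*\omega_S^n\cong\omega_T^n\otimes L^n\oplus\omega_T^n\otimes L^{n-1}$ together with Riemann--Roch computations of $h^0(L^k)$, but here $T=S/\iota$ is only a K3 with rational double points, the cover may be branched along a divisor \emph{plus} isolated fixed points (the point $[1{:}0{:}0{:}0{:}0]$ can lie on $S$), and $L$ is merely $\QQ$-Cartier (indeed $L^2=\tfrac12 K_S^2=\tfrac12$), so neither the eigensheaf decomposition nor integral Riemann--Roch applies verbatim; the ``parity of $n$'' rule you invoke also has to be derived, since the naive guess gives the wrong sign on $H^0(K_S)$ (the $2$-form on $S$ is $\iota$-\emph{invariant}, consistent with $x_0\mapsto -x_0$ only after the rescaling $\lambda=-1$, as the residue form $\Omega/(dF\wedge dG)$ is itself anti-invariant). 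More importantly, fixing the signs on the generators $x_0,\dots,x_4$ is not enough: you must also show that the $2$-dimensional space of degree-$6$ relations is spanned by \emph{invariant} elements (no anti-invariant relation occurs), which requires comparing $\dim H^0(6K_S)^{\pm}$ with the count of invariant degree-$6$ monomials; your sketch never addresses the relations. Finally, in the converse direction, ``$\pi^*K_T\equiv 0$ and $q=0$ hence birational to a K3'' does not yet exclude the Enriques case --- you need the descent of the $\iota$-invariant $2$-form (using $p_g(S)=1$ and the invariance just discussed) --- and you should also record that $H^0(2K_S)$ is spanned by the invariants $x_0^2,x_1,x_2$, so that the bicanonical map genuinely factors through $S/\iota$ as required by Definition~\ref{tod}.
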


	\begin{thm}[{Catanese--Debarre \cite[Theorems 2.8, 2.9]{CD}}]\label{cd} Let $S$
		be a Todorov surface
		with fundamental invariants $(1,10)$. Then $S_{}$ is isomorphic to the
		quotient $V/\tau$, where $V$ is a complete
		intersection in  the  weighted projective space $\PP:=\PP(1, 1, 1, 2, 2)$
		having only rational double points as singularities, given by the system of
		degree-4
		equations
		\[ \begin{cases} F=z_3^2+c w^4+w^2 q(x_1,x_2)+Q(x_1,x_2)=0\ ,&\\
		G=z_4^2+c^\prime w^4+w^2 q^\prime(x_1,x_2)+ Q^\prime(x_1,x_2)=0\ .&\\
		\end{cases}  \]
		Here $[w:x_1:x_2:z_3:z_4]$ are coordinates for $\PP$, and
		$q,q^\prime$ are quadratic forms, $Q,Q^\prime$ are quartic forms without
		common factor, and $c,c^\prime$ are constants not both $0$. The involution
		$\tau\colon \PP\to \PP$ is defined as 
		\[ [w:x_1:x_2:z_3:z_4]\ \mapsto\ [-w:x_1:x_2:z_3:z_4]\ .\]
		Moreover, the involution $\iota$ of $\PP$ defined as
		\[   [w:x_1:x_2:z_3:z_4]    \ \mapsto [w:x_1:x_2:-z_3:z_4]\]
		induces the involution of Definition \ref{tod} on $S$.		
		
		Conversely, given a weighted complete intersection $V\subset\PP$ as above, the
		quotient $S:=V/\tau$ is isomorphic to a Todorov surface with
		fundamental invariants $(1,10)$.    
	\end{thm}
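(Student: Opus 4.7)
The plan is to construct $V$ as the \'etale double cover of $S$ associated to the unique non-trivial $2$-torsion class in $\Pic(S)$, and then to pin down the coordinate ring of $V$ by exploiting the symmetries given by the covering involution $\tau$ together with the lift of the involution $\iota$ of Definition~\ref{tod}. Concretely, since the fundamental group of $S$ is $\ZZ/2\ZZ$, there is a unique non-trivial $2$-torsion class $\eta\in\Pic(S)$, and $\eta$ is necessarily $\iota$-invariant by uniqueness. Let $\tau:V\to S$ be the associated \'etale double cover. The standard formulae for \'etale double covers give $K_V^2=2 K_S^2=4$ and $\chi(\OO_V)=2\chi(\OO_S)=4$\,; combined with a vanishing argument showing $q(V)=0$, this yields $p_g(V)=3$.

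Next, I would lift $\iota$ to an involution on $V$\,; this is possible precisely because $\eta$ is $\iota$-invariant, and the lift is unique up to composition with $\tau$. The group $\langle\iota,\tau\rangle\cong(\ZZ/2\ZZ)^2$ acts on the graded canonical ring $R(V):=\bigoplus_{m\ge 0}\HH^0(V,mK_V)$. Using Riemann--Roch together with Kodaira-type vanishing, I would compute the dimensions of the graded pieces and decompose each piece into its four character eigenspaces. The decomposition of $\HH^0(V,K_V)$ should produce a natural basis consisting of one distinguished section $w$ and two sections $x_1,x_2$ lying in opposite $\tau$-characters\,; passing to $\HH^0(V,2K_V)$ and extracting two generators not arising from weight-$2$ monomials in $w,x_1,x_2$ yields further generators $z_3,z_4$, distinguished by their $\iota$-characters.

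The main obstacle is showing that these five sections yield a closed embedding $V\hookrightarrow\PP(1,1,1,2,2)$ and that the ideal of relations is generated by exactly two quartic equations of the prescribed form. For the embedding, I would use base-point-free and very-ampleness criteria in combination with the projection formula $\tau_*\OO_V=\OO_S\oplus\eta$, reducing the separation of points and tangent vectors to statements about pluricanonical systems on $S$ and on the associated K3 surface. For the shape of the equations, the $(\ZZ/2\ZZ)^2$-character decomposition of $\HH^0(V,4K_V)$ forces each relation to lie in a prescribed character space\,; comparing the count of weight-$4$ monomials in each character with the expected dimension of the space of relations identifies $z_3^2$ and $z_4^2$ as the only possible leading terms and constrains the remainders to the form $cw^4+w^2 q(x_1,x_2)+Q(x_1,x_2)$, respectively $c^\prime w^4+w^2 q^\prime(x_1,x_2)+Q^\prime(x_1,x_2)$. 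The non-degeneracy conditions (that $(c,c^\prime)\neq(0,0)$ and that $Q,Q^\prime$ have no common factor) correspond precisely to $V$ having only rational double points.

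For the converse, I would start from $V\subset\PP(1,1,1,2,2)$ defined by equations of the prescribed form with only rational double points, and verify by adjunction for weighted complete intersections that $K_V=\OO_V(1)$ and $K_V^2=4$. Since $\tau$ acts freely away from the singular locus, $S:=V/\tau$ inherits ample canonical class with $K_S^2=2$, $p_g(S)=1$, $q(S)=0$, and a non-trivial $2$-torsion class pulled back from the covering. The associated K3 surface arises from $V/\iota$\,; elimination of the $\iota$-anti-invariant coordinate $z_3$ realizes $V/\iota$ as a quartic hypersurface in $\PP(1,1,1,2)$, and adjunction identifies its minimal resolution as a K3 surface, thereby verifying all conditions of Definition~\ref{tod} and completing the classification.
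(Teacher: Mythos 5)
The paper does not prove this statement at all: it is quoted from Catanese--Debarre \cite{CD} (Theorems 2.8 and 2.9), so your attempt can only be measured against the strategy of that reference. Your plan does follow it in outline -- pass to the \'etale double cover $V\to S$ attached to the nontrivial $2$-torsion class $\eta$, compute $K_V^2=4$, $\chi(\OO_V)=4$, $p_g(V)=3$, and analyse the canonical ring of $V$ with the action of the covering involution and a lift of $\iota$ -- but as a proof it has genuine gaps.

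First, the hard core of the forward direction is asserted rather than proved. That the canonical ring of $V$ is generated in degrees $1,2$ by five elements with exactly two relations in degree $4$ (equivalently, that $V$ embeds as a complete intersection of two quartics in $\PP(1,1,1,2,2)$) is the main content of \cite{CD}, and ``base-point-free and very-ampleness criteria'' plus ``dimension counts'' do not substitute for it; likewise $q(V)=0$, i.e.\ $h^1(S,\eta)=0$, is a real lemma and not a formal consequence of Riemann--Roch (Riemann--Roch only gives $h^0(K_S+\eta)\geq 2$). More seriously, the character decomposition under $\langle\tau,\iota\rangle$ does \emph{not} by itself force the diagonal shape of $F$ and $G$: every surface in the larger $16$-dimensional family of Remark~\ref{larger} has the same covering involution and the same weighted complete-intersection description, but with general $\tau$-invariant quartics containing terms such as $z_3z_4$, $z_iw^2$, $z_i\cdot(\hbox{quadric in }x)$. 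What eliminates these terms is precisely the Todorov hypothesis: you must prove that the involution of Definition~\ref{tod} lifts to $V$, that the lift can be simultaneously normalised with $\tau$ so as to act as $z_3\mapsto -z_3$ in suitable coordinates, and that invariance under the resulting group kills the cross terms. This is where the $12$-dimensional Todorov locus inside the $16$-dimensional family is cut out, and your sketch does not engage with it.

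Second, the converse direction contains a step that fails as stated. You assert that ``$\tau$ acts freely away from the singular locus'' and that the hypotheses ($(c,c')\neq(0,0)$ and $Q,Q'$ without common factor) ``correspond precisely to $V$ having only rational double points''. Neither is right: the rational-double-point condition is a separate assumption, while those two hypotheses are exactly what is needed to make the covering involution fixed-point free on $V$, and freeness is what yields $K_S^2=2$, $\chi(\OO_S)=2$ and $\pi_1(S)=\ZZ/2\ZZ$. Had you actually carried out the freeness check you would have hit a trap in the statement itself: the involution $[w:x_1:x_2:z_3:z_4]\mapsto[-w:x_1:x_2:z_3:z_4]$, taken literally, fixes the hyperplane $\{w=0\}$ pointwise, and this hyperplane meets $V$ in a curve; its quotient (substitute $v=w^2$) is the complete intersection of two quartics in $\PP(1,1,2,2,2)$, which has trivial dualizing sheaf -- that is the singular K3, not $S$. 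The fixed-point-free covering involution is the one acting by $-1$ on $w,z_3,z_4$ simultaneously, and it is free on $V$ exactly when $Q,Q'$ have no common zero and $(c,c')\neq(0,0)$. Your converse must use that involution, verify freeness from the hypotheses, and only then deduce the invariants of $S=V/\tau$; without this the numerical conclusions do not follow.
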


	\begin{rmk}\label{larger}
		As is shown in \cite{Cat} (resp. \cite{CD}), the Todorov surfaces with
		fundamental invariants $(0,9)$ (resp. $(1,10)$)  form a 12-dimensional subfamily
		of a larger 18-dimensional (resp. 16-dimensional) family of surfaces of general 
		type with $p_g=1$, $q=0$ and $K^2=1$ (resp. $K^{2}=2$ and $\pi_1=\ZZ/2\ZZ$),
		where the above explicit description using complete intersections in the
		weighted projective space is true.
		We do not know how to establish an
		MCK decomposition for these larger families of regular surfaces with $p_g=1$.
		Note that for any surface $S$ in these larger families, there is still a Hodge
		isometry $H^2_{\operatorname{tr}}(S,\QQ)\cong
		H^2_{\operatorname{tr}}(\wt{T},\QQ)$ with $\wt{T}$ a K3
		surface \cite{Mor4}, but it is not clear whether this isometry is induced by
		an algebraic
		correspondence.	
	\end{rmk}

	\subsubsection{Universal families}
	In view of Theorems \ref{ct} and \ref{cd}, let us construct universal families,
	which will play a crucial role in establishing the Franchetta property.

	\begin{notation}\label{not} 
		\noindent
		(\rom1) \textbf{(Case (0,9))}\,: Let $\PP$ be the weighted projective space
		$\PP(1,2,2,3,3)$. Let $\bar{B}\subset\PP \bigl( \HH^0(\PP, {\mathcal
			O}_{\PP}(6))^{\oplus
			2}\bigr)$
		be the linear subspace
		parameterizing pairs of (weighted) homogeneous polynomials of degree 6
		\[  F_b(x_0,\ldots,x_4) ,\quad  G_b(x_0,\ldots,x_4) ,\ \]
		where $x_0$ only occurs in even degree. Let $B$ be the Zariski open subset of
		$\bar{B}$ consisting of $b\in \bar{B}$ such that 
		\[ S_b:=\bigl\{  x\in \PP\ \vert\ F_b(x)=G_b(x)=0 \bigr\}    \]
		is a non-singular surface.   
		
		Let
		$\Ss\to B$
		denote the total space of the family over $B$ (\emph{i.e.}, by
		Theorem~\ref{ct}, the fiber $S_b$ over $b\in B$ is a smooth Todorov
		surface of type $(0,9)$), and let $\bar\Ss\to \bar B$ denote the total space
		of the family over $\bar B$. 
		We have a diagram of families 
		\[  \begin{array}[c]{ccccc}   & & \Ss & \subset & \bar{\Ss}\\
		&& \downarrow && \downarrow\\
		& & \TT &\subset& \bar{\TT}\\
		\end{array}\]
		where $\bar{\TT}$ (resp.~$\TT$) is the quotient of $\bar{\Ss}$
		(resp.~$\Ss$) under the involution $\iota$
		induced by $[x_0,x_1,\ldots,x_4]\mapsto [-x_0,x_1,\ldots,x_4]$. That is, the
		fiber $T_b$ over $b\in B$ is a ``singular K3 surface'', and its minimal
		resolution
		is the K3 surface associated to $S_b$.

		\noindent
		(\rom2) \textbf{(Case (1,10))}\,: Let $\PP$ be the weighted projective space
		$\PP(1,1,1,2,2)$ with coordinates $[w,x_1,x_2,z_3,z_4]$. Let
		$\bar B$ be the linear subspace of $\PP \bigl( \HH^0(\PP, {\mathcal
			O}_{\PP}(4))^{\oplus
			2}\bigr)$
		parameterizing pairs of weighted homogeneous equations of
		the form
		\[ \begin{cases} F_b=a z_3^2+c w^4+w^2 q(x_1,x_2)+Q(x_1,x_2) ,&\\
		G_b=a^\prime z_4^2+c^\prime w^4+w^2 q^\prime(x_1,x_2)+ Q^\prime(x_1,x_2).&\\
		\end{cases}  \]
		
		Let $B$ be the Zariski open subset of $\bar{B}$ consisting of points $b\in
		\bar{B}$ such that 
		\[ V_b:= \bigl\{  x\in \PP\ \vert\ F_b(x)=G_b(x)=0 \bigr\}    \]
		is a smooth surface and $F_b, G_b$ are as in Theorem \ref{cd} (in particular
		$aa^\prime\not=0$ and $c,c^\prime$ not both~$0$).
		
		Let
		$ \VV \to B$
		denote the total space of the family over $B$, and let
		$\Ss:=\VV/\langle\tau\rangle  \to B$
		denote the family obtained by applying the fixed point-free involution
		$\tau\times\hbox{id}_B$ to $\VV\subset \PP\times B$ (\emph{i.e.}, by
		Theorem~\ref{cd}, the fiber $S_b$ over $b\in B$ is a smooth Todorov surface of
		type $(1,10)$). Denote similarly $\bar \VV$ and $\bar \Ss:= \bar \VV/\langle
		\tau \rangle$ the total spaces of the corresponding families over $\bar B$.
		We have a diagram of families 
		\[  \begin{array}[c]{ccccc}   &  & \VV & \subset 
		&   \bar{\VV}\\
		&& \downarrow &&   \downarrow\\
		&  & \Ss & \subset &     \bar{\Ss}\\
		&& \downarrow &&   \downarrow\\
		& & \TT &\subset&    \bar{\TT}\\
		\end{array}\]
		where $\bar{\TT}$ (resp. $\TT$) is the quotient of $\bar{\Ss}$ (resp. $\Ss$)
		under the
		involution $\iota$
		induced by $[w,x_1,x_2,z_3,z_4]\mapsto [w,x_1,x_2,-z_3,z_4]$.     
		That is, the fiber $T_b$ over $b\in B$ is a ``singular K3 surface'', and its
		minimal resolution
		is the K3 surface associated to $S_b$.     
	\end{notation}

	\begin{rmk} In both cases of Notation \ref{not}, it can be checked (\emph{cf.}\
		\cite{Cat} and \cite{CD}) that the parameter space $B$ is non-empty,
		\emph{i.e.}, the general Todorov surface of type (0,9) or (1,10) is smooth. See
		also Remark~\ref{remtod}.
	\end{rmk}

	\subsubsection{The Franchetta property}
	
	\begin{prop}[Franchetta property for $T$]\label{p1} Let $\TT\to B$ be the
		universal family as above. Let $\gamma\in \CH^2(\TT)$ be a cycle that has
		degree
		$0$ on the general fiber. Then
		\[ \gamma\vert_{T_b}=0\ \ \ \hbox{in}\ \CH^2(T_b)\ \ \ \forall b\in B\ .\]
	\end{prop}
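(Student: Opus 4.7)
The plan is to reduce to the projective-bundle trick already used in the proof of Lemma~\ref{fp}, exploiting the fact that $T_b$ is a finite quotient of a complete intersection $W_b$ sitting in a fixed weighted projective space $\PP$ (namely $\PP(1,2,2,3,3)$ with $W_b=S_b$ in case $(0,9)$, and $\PP(1,1,1,2,2)$ with $W_b=V_b$ in case $(1,10)$). Let $\bar\WW\subset \PP\times \bar B$ denote the universal family ($\bar\WW=\bar\Ss$ or $\bar\WW=\bar\VV$, respectively), with projections $p\colon \bar\WW\to \PP$ and $q\colon \bar\WW\to \bar B$. Because the defining conditions $F_b(x)=0$ and $G_b(x)=0$ are linear in the coefficients of $(F_b,G_b)$, the morphism $p$ exhibits $\bar\WW$ as a projective subbundle of $\PP\times \bar B$ over a dense open subset of $\PP$.

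The first step is to extend $\gamma\in \CH^2(\TT)$ to a class $\bar\gamma\in \CH^2(\bar\TT)$ and to pull it back along the finite quotient map $\pi\colon \bar\WW\to \bar\TT$ of degree $d$ (with $d=2$ in case $(0,9)$ and $d=4$ in case $(1,10)$) to obtain $\gamma'\in \CH^2(\bar\WW)$. The projective-bundle argument recalled in the proof of Lemma~\ref{fp} -- using the relation $\xi=\nu^{-1}(q^{*}h-p^{*}z)$ together with $q^{*}h|_{W_b}=0$ -- then produces a single class $\alpha\in \CH^2(\PP)$, \emph{independent of $b$}, such that $\gamma'|_{W_b}=\alpha|_{W_b}$ for every smooth $W_b$. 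Since $\CH^*(\PP)_{\QQ}$ is generated by powers of a single divisor class $H$, we must have $\alpha=c\,H^2$ for a fixed constant $c\in \QQ$, and hence $\gamma'|_{W_b}=c\cdot H^2|_{W_b}$ uniformly in $b$.

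The second step concludes by the fiberwise projection formula $\pi_{b,*}\pi_b^{*}=d\cdot \id$ on rational Chow groups, which yields
\[ \gamma|_{T_b}\;=\;\tfrac{1}{d}\,\pi_{b,*}(\gamma'|_{W_b})\;=\;\tfrac{c}{d}\,\pi_{b,*}(H^2|_{W_b}). \]
Taking degrees gives $\deg(\gamma|_{T_b})=(c/d)\cdot \deg(H^2|_{W_b})$, and since $H$ is ample on $\PP$ the number $\deg(H^2|_{W_b})$ is a fixed positive constant. The hypothesis $\deg(\gamma|_{T_b})=0$ for general $b$ thus forces $c=0$, and therefore $\gamma|_{T_b}=0$ in $\CH^2(T_b)$ for every $b\in B$, as required.

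The main technical point I expect to be the obstacle is verifying that the projective-bundle trick applies cleanly in this weighted-projective (and partially singular) setting\,: one needs $\bar\WW\to \PP$ to be a projective subbundle of constant rank over a dense open of $\PP$, the closed complement must not contribute to the restrictions $\gamma'|_{W_b}$, and one needs $\CH^*(\PP)_\QQ$ to be one-dimensional in each degree. All three points are routine -- base-point-freeness of the relevant linear system on $\bar B$ for the first two, and a standard fact about Chow rings of weighted projective spaces for the third -- but they do require one to spell out the explicit form of the equations parametrized by $\bar B$ in each of the two cases.
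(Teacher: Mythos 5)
Your overall strategy is the same as the paper's: spread the cycle over the universal family, use that the total space fibers over a (weighted) projective space as a projective bundle so that restrictions to fibers come from $\CH^2$ of that weighted projective space (one-dimensional, generated by $H^2$), and then transfer between the cover $W_b$ and the quotient $T_b$ by $\frac{1}{d}\pi_{b,*}\pi_b^*=\id$ together with the degree-zero hypothesis. The only cosmetic difference is in case $(0,9)$: the paper works directly on the quotient side, identifying $\bar\TT\to\bar B$ with the universal $(6,6)$ complete intersection in $\PP(2,2,2,3,3)$, which is a projective bundle over that space, whereas you work on the cover $\bar\Ss\to\PP(1,2,2,3,3)$ and push forward; both are fine, since the invariant degree-$6$ system on $\PP(1,2,2,3,3)$ has no base points and the two equations impose conditions of constant rank $2$.

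The one place where your sketch is genuinely off is the justification you offer for the technical point you yourself flag, in case $(1,10)$. There, base-point-freeness \emph{fails}: every $G_b=a'z_4^2+c'w^4+w^2q'+Q'$ vanishes identically at $[0{:}0{:}0{:}1{:}0]$ and every $F_b$ at $[0{:}0{:}0{:}0{:}1]$, so the rank of the pair of linear conditions drops from $2$ to $1$ exactly at these two points and $\bar\VV\to\PP$ is a $\PP^r$-bundle only over $\PP^-=\PP\setminus\{[0{:}0{:}0{:}1{:}0],[0{:}0{:}0{:}0{:}1]\}$ (this is \cite[Lemma 2.12]{tod}, and it is a statement about the restricted linear family of Notation~\ref{not}, not a formal consequence of linearity in the coefficients). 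What rescues the argument is not base-point-freeness but the condition $aa'\neq 0$ built into the definition of $B$ (Theorem~\ref{cd}): it forces $F_b$ and $G_b$ to be nonzero at the respective special points, hence $V_b\subset\PP^-$ for all $b\in B$, so one may run the projective-bundle argument for $\bar\VV^-\to\PP^-$ and use that $\CH^2(\PP^-)$ is still one-dimensional. With that correction your proof closes the gap and coincides with the paper's; as written, the claim that the verification is ``routine by base-point-freeness'' would not survive in the $(1,10)$ case, and this is precisely the non-trivial point of the published proof.
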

	
	\begin{proof}
		For the case $(0,9)$, the family $\TT\to B$ is constructed as the quotient
		\[  \pi\colon\ \ \Ss\ \to\ \Ss/\langle \iota \rangle=:\TT\ ,\]
		where $\Ss\to B$ is as in Notation \ref{not}$(\rom1)$, and $\iota$ is as in
		Theorem \ref{ct}. 
		The quotient 
		$ \PP/ \langle \iota \rangle $
		can be identified with the weighted projective space
		$\PP^\prime:=\PP(2,2,2,3,3)$, and quotients of the form $T_b=S_b/\langle
		\iota_b\rangle$ can be identified with weighted complete intersections of degree
		$(6,6)$ in $\PP^\prime$. It follows that $\bar{\TT}\to \bar{B}$ is the same as
		the universal family of
		weighted complete intersections of degree $(6,6)$ in $\PP^\prime$. As such,
		$\bar{\TT}\to\PP^\prime$ has the structure of a projective bundle.

		Once we have a projective bundle structure, the argument proving (\ref{univ})
		shows the following equality\,:
		\[  \ima\Bigl( \CH^\ast(\bar{\TT})\to
		\CH^\ast(T_b)\Bigr)=\ima\Bigl( \CH^\ast(\PP^\prime)\to \CH^\ast(T_b)\Bigr),
		\quad
		\forall b\in
		B.\]
		But $\CH^2(\PP^\prime)$ is one-dimensional, generated by the square of 
		a hyperplane $h\subset\PP^\prime$.
		It follows that for any $\gamma\in \CH^2(\TT)$,
		\[ \gamma\vert_{T_b}=  m\, h^2\vert_{T_b}   \ \ \ \hbox{in}\ \CH^2(T_b)\ ,\]
		where $m\in\QQ$ and $h_b:=h\vert_{T_b}\in \CH^1(T_b)$ is a hyperplane section.
		This implies the Franchetta property.
		
		For the case $(1,10)$, the family $\TT\to B$ is constructed as the quotient
		\[ \pi\colon\ \ \VV\ \to\ \VV/\langle \tau,\iota\rangle=:\TT ,\]	
		where $\VV\to B$ and $\tau$ and $\iota$ are as in $(\rom2)$ of Notation
		\ref{not}.
		We note that for any $b\in B$ the surface 
		$V_b\subset\PP$ is contained in $\PP^-:=
		\PP\setminus\{[0,0,0,1,0], [0,0,0,0,1]\}$. This means that $\VV$ is a
		Zariski open subset of $\bar{\VV}^-$, which is defined
		by the fiber diagram
		\[ \begin{array}[c]{ccc}  
		\bar{\VV}^- &\subset& \bar{\VV}\\
		\downarrow&&\downarrow\\
		\PP^-&\subset&\ \PP\ .
		\end{array}\]
		It is proven in \cite[Lemma 2.12]{tod} that $\bar{\VV}^-\to\PP^-$ is a
		$\PP^r$-bundle.       	
		The argument proving (\ref{univ}) then shows that there is equality
		\[  \ima\Bigl( \CH^\ast(\bar{\VV}^-)\to
		\CH^\ast(V_b)\Bigr)=\ima\Bigl( \CH^\ast(\PP^-)\to \CH^\ast(V_b)\Bigr)\quad
		\forall b\in
		B.\]
		But $\CH^2(\PP^-)$ is one-dimensional, generated by the square of a hyperplane
		$h$.
		It follows that
		\[   (\pi_b)^\ast(\gamma\vert_{T_b})=  ( \pi^\ast\gamma)\vert_{V_b} =
		h^2\vert_{V_b} \ \  \hbox{in}\ \CH^2(V_b)\ , \]
		and we conclude as before. 
	\end{proof}

	\begin{rmk}The link between
		Todorov surfaces and K3 surfaces is interesting also from the K3 side. Indeed,
		Proposition \ref{p1} says that
		the Franchetta property holds for the universal K3 surface obtained as a
		double cover of the projective plane branched along two cubics. (The fact that
		the quotient $T$ is of
		this type is proven in \cite{Rito}. The fact that a general such K3 is
		quotient
		of a Todorov surface of type (0,9) is because the period map for Todorov
		surfaces of type (0,9) is known to have 2-dimensional fibers \cite{Tod}.)
		This corresponds to a 10-dimensional locus inside~$\FF_2$, the moduli stack of
		K3 surfaces of degree 2, and it is not a
		priori clear that the Franchetta property should be true over this locus. This
		is similar to Remark \ref{r1} about special cubic fourfolds related to
		K\"uchle fourfolds of type $c7$.
	\end{rmk}

	\subsection{Constructing an MCK decomposition}
	We prove Theorem \ref{T:Todorov}. First, we recall the following result
	concerning the Chow group of
	0-cycles of Todorov surfaces of type $(0,9)$ and $(1,10)$. It can be seen as a
	special case of the Bloch conjecture.

	\begin{thm}[Laterveer \cite{moi}, \cite{tod}]\label{isomotives}
		Let $S$ be a smooth Todorov
		surface with fundamental invariants (0,9) or (1,10). Let $\wt{T}$ be its
		associated
		K3 surface, and let $\Gamma\in \CH^2( S\times \wt{T})$ be the correspondence
		induced by
		the quotient morphism $S\to T:=S/\iota$ and the resolution of
		singularities $\wt{T}\to T$.  Then $\Gamma$ induces an isomorphism 
		$$\Gamma_*: \CH_0(S)\ \xrightarrow{\cong}\ \CH_0(\wt{T})\ .$$ 
	\end{thm}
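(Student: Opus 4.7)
The strategy is to reduce the statement to a Bloch-type vanishing and then exploit the explicit weighted complete-intersection geometry of Todorov surfaces of these two types. First I would unpack the correspondence\,: letting $\pi\colon S\to T=S/\iota$ be the quotient and $\mu\colon \wt T\to T$ the minimal resolution of the rational double points on $T$, the induced map on $0$-cycles is the composition $\Gamma_*\colon \CH_0(S)\xrightarrow{\pi_*}\CH_0(T)\xrightarrow{\mu_*^{-1}}\CH_0(\wt T)$. Because $\mu$ is a birational resolution of rational surface singularities, $\mu_*$ is an isomorphism on $\CH_0(-)_{\QQ}$, so everything is controlled by $\pi_*$. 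Writing $\CH_0(S)=\CH_0(S)^{+}\oplus \CH_0(S)^{-}$ for the $\iota_*$-eigenspace decomposition and using $\pi_*\pi^*=2\,\id$, one sees that $\pi_*$ is surjective, restricts to an isomorphism $\CH_0(S)^{+}\xrightarrow{\sim}\CH_0(T)$, and has kernel equal to $\CH_0(S)^{-}$. Hence the theorem is equivalent to the vanishing $\CH_0(S)^{-}=0$.

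Since degrees are $\iota$-invariant, the degree map already annihilates $\CH_0(S)^{-}$, and what remains is the Bloch-type statement $\CH_0(S)^{-}_{\hom}=0$. As $S/\iota$ is birational to a K3 surface, the involution $\iota^*$ acts trivially on $H^{2,0}(S)$, so this is a concrete instance of Bloch's conjecture for a symplectic involution. To establish it I would apply Voisin's spread method to the universal family $\Ss\to B$ constructed in Notation~\ref{not}. Consider the anti-invariant projector
$$P:=\tfrac12\bigl(\Delta_{\Ss/B}-\Gamma_{\iota}\bigr)\in\CH^2(\Ss\times_B\Ss),$$
whose fiberwise action on $\CH_0(S_b)$ is precisely the projection onto $\CH_0(S_b)^{-}$. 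Since $(\pi\times\pi)_*P=0$, a Bloch--Srinivas type decomposition combined with the Franchetta input of Proposition~\ref{p1} for $\TT\to B$ forces $P$ to be supported fiberwise, modulo generically defined corrections, on a divisor $D_b\times S_b\cup S_b\times D'_b$ inside $S_b\times S_b$. The explicit weighted complete-intersection presentations of Theorems~\ref{ct} and~\ref{cd} then supply a sufficiently ample pencil of curves on $S_b$, along which one can exhibit explicit rational equivalences that propagate the divisorial support to the zero cycle, yielding the desired vanishing.

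The main obstacle is this last step, which is the genuine content of Bloch's conjecture and cannot be handled by purely formal motivic manipulations. The crucial geometric input is, in the type $(0,9)$ case, the degree-$6$ bidegree structure on $S$ inside $\PP(1,2,2,3,3)$, and, in the type $(1,10)$ case, the étale $\ZZ/2$-cover $V\to S$ realizing $S$ as a quotient of a complete intersection in $\PP(1,1,1,2,2)$. In both situations the extra symmetries that are not present on a general surface with $p_g=1$ of the same $K^2$ (\emph{cf.}\ Remark~\ref{larger}) furnish the ambient rational equivalences needed to collapse anti-invariant $0$-cycles, and the spread argument over $B$ then propagates these relations from one well-chosen fiber to all fibers. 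A pleasant byproduct of this approach is that it simultaneously delivers a Franchetta-type statement for $\Ss\to B$ itself, which is exactly the input required by Proposition~\ref{P:criterionMCK} when constructing the MCK decomposition for these Todorov surfaces.
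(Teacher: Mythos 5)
Your overall strategy --- reduce via $\pi_*$ and $\mu_*$ to the vanishing of the anti-invariant part $\CH_0(S)^-$ (a case of the Bloch conjecture for the involution $\iota$, which acts as the identity on $\HH^{2,0}(S)$ because $S/\iota$ is birational to a K3), and attack this by Voisin's spread method using the explicit complete-intersection description of the universal family --- is exactly the route taken in the references the paper cites (\cite{moi} for type $(0,9)$, \cite{tod} for type $(1,10)$); note the paper itself only records a sketch and does not reprove the theorem. However, your outline has a genuine gap at its technical heart. The Franchetta statement you invoke, Proposition~\ref{p1}, concerns generically defined cycles in $\CH^2(\TT)$, i.e.\ $0$-cycles on the quotient fibers $T_b$; it says nothing about the correspondence $P=\tfrac12(\Delta_{S_b}-\Gamma_{\iota_b})\in\CH^2(S_b\times S_b)$. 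Likewise the identity $(\pi\times\pi)_*P=0$ does not force any divisorial support: the kernel of $(\pi\times\pi)_*$ on $\CH^2(S_b\times S_b)$ is enormous. What is actually needed is a Franchetta-type property for generically defined \emph{self-correspondences} on $\Ss\times_B\Ss$, so that the cohomological decomposition coming from $\iota^*=\mathrm{id}$ on $\HH^2_{\tr}(S_b)$ (which is where the divisorial support really comes from) can be upgraded to a decomposition modulo \emph{rational} equivalence with error terms supported on $D_b\times S_b\cup S_b\times D_b'$. Proving this spread statement for the family of fiber squares is the hard step of Voisin's method (it uses the very simple structure of the total space of the universal family of weighted complete intersections), and it is precisely what is carried out in \cite{moi} and \cite{tod}; your proposal presupposes it rather than proving it, and substitutes the weaker, already-available Proposition~\ref{p1}.

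Your concluding step is also misdirected. Once one has the Chow-theoretic divisorial support fiberwise, no ``ample pencil of curves'' or ad hoc rational equivalences are needed: a correspondence supported on $D_b\times S_b$ annihilates all $0$-cycles (move them off $D_b$), while one supported on $S_b\times D_b'$ acts on $\CH_0(S_b)_{\hom}$ through a regular homomorphism to $\operatorname{Pic}^0$ of the normalization of $D_b'$, which factors through $\operatorname{Alb}(S_b)=0$ since $q(S_b)=0$; hence $P_*=0$ on $\CH_0(S_b)_{\hom}$, so $\CH_0(S_b)^-=0$, and your (correct) reduction via $\pi_*\pi^*=2\,\id$ and the birational invariance of $\CH_0$ under the resolution $\wt T\to T$ finishes the argument. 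Conversely, if one only knows the divisorial support cohomologically, no explicit pencil will recover the Chow-level statement. So the missing ingredient is exactly the spread/Franchetta property for $\Ss\times_B\Ss\to B$ applied to the (suitably modified) graph of $\iota$, which your write-up conflates with Proposition~\ref{p1} and then tries to compensate for with geometry where none is required.
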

	
	\begin{proof}[Sketch of proof]
		The
		(0,9) case is \cite[Proposition~30]{moi}, while the
		the (1,10) case is \cite[Theorem~5.2]{tod}. 
		In both cases, the crux in proving an isomorphism of Chow groups is that there
		is
		an explicit description of the surfaces $S$ in terms of (quotients of)
		complete intersections. This ensures that Voisin's method of
		``spread''~\cite{Vo} applies. This method exploits the fact that
		the total space of the universal family of complete intersections of a certain
		type has a very simple structure.
		Thanks to this simple structure, one can prove the Franchetta property for
		self-correspondences of degree zero that exist universally. This applies in
		particular to (a modification of) the graph of the involution $\iota$.
	\end{proof}

	\begin{proof}[Proof of Theorem \ref{T:Todorov}]
		This is an application of Proposition \ref{P:criterionMCK}, with $\Ss\to B$
		the family of smooth Todorov surfaces, and $\TT\to B$ the family of
		quotients under the involution $\iota$.
		Let us ascertain that the hypotheses of Proposition~\ref{P:criterionMCK} are
		met
		with.
		The Franchetta property for $\TT\to B$ is Proposition~\ref{p1}. The MCK
		decomposition for the ``singular K3 surface'' $T$ follows from the
		MCK decomposition for its minimal resolution of singularities $\wt{T}$ (which
		is
		a K3 surface and hence admits an MCK decomposition by
		Example~\ref{E:surfaces})
		via Remark \ref{R:MCKdescend} (which still makes sense for surfaces with
		quotient singularities). Finally, hypothesis (\rom3) is Theorem
		\ref{isomotives}.
	\end{proof}

	\begin{rmk} Theorem \ref{isomotives} has recently been proven in \cite{Z} for
		Todorov surfaces
		with fundamental invariants (2,12). As such, it seems likely that the present
		approach also works to establish an MCK decomposition for this third family of
		Todorov surfaces.
	\end{rmk}

	\bibliographystyle{amsalpha}

\begin{thebibliography}{9}
		
		\bibitem{andre} Y.~Andr\'e, Une introduction aux motifs (motifs purs, motifs
		mixtes, p\'eriodes), Panoramas et Synth\`eses, 17. Soci\'et\'e Math\'ematique
		de France, Paris, 2004. xii+261 pp.
		
		\bibitem{Beau} A.~Beauville, Sur l'anneau de Chow d'une vari\'et\'e
		ab\'elienne, Math. Ann. 273 (1986), 647--651.
		
		\bibitem{Beau3} A.~Beauville, On the splitting of the Bloch--Beilinson
		filtration, in: Algebraic cycles and motives (J. Nagel et al., eds.), London
		Math. Soc. Lecture Notes 344, Cambridge University Press 2007.
		
		\bibitem{BD} A.~Beauville and R.~Donagi, La vari\'et\'e des droites d'une
		hypersurface cubique de dimension 4,
		C. R. Acad. Sci. Paris Sér. I Math. 301 (1985), no. 14, 703--706.
		
		\bibitem{BV} A.~Beauville and C.~Voisin, On the Chow ring of a K3 surface, J.
		Alg. Geom. 13 (2004), 417--426.
		
		\bibitem{BL} N.~Bergeron and Z.~Li,
		\newblock Tautological classes on moduli spaces of hyper-{K}\"{a}hler
		manifolds,
		\newblock { Duke Math. J.}, 168(7):1179--1230, 2019.
		
		\bibitem{BS} S.~Bloch and V.~Srinivas, Remarks on correspondences and
		algebraic cycles, American Journal of Mathematics Vol. 105, No 5 (1983),
		1235--1253.
		
		\bibitem{Cat} F.~Catanese, Surfaces with $K^2=p_g=1$ and their period mapping,
		in:  Algebraic geometry (Copenhagen, 1978), Springer Lecture Notes in
		Mathematics, Springer 1979.
		
		\bibitem{CD} F.~Catanese and O.~Debarre, Surfaces with $K^2=2$, $p_g=1$,
		$q=0$, J. reine u. angew. Math. 395 (1989), 1--55.
		
		\bibitem{Ceresa} G.~Ceresa, {$C$}\ is not algebraically equivalent to
		{$C^{-}$}\ in its
		{J}acobian, Ann. of Math. (2), 117(2):285--291, (1983).
		
		\bibitem{DV} O.~Debarre and C.~Voisin, Hyper-K\"ahler fourfolds and Grassmann
		geometry, J. reine angew. Math. 649 (2010), 63--87.
		
		\bibitem{DM} C.~Deninger and J.~Murre, Motivic decomposition of abelian
		schemes and the {F}ourier transform, J. Reine Angew. Math., 422:201--219,
		(1991).
		
		\bibitem{DiazIMRN} H.A.~Diaz, The Chow ring of a cubic hypersurface, I.M.R.N.
		(2019). 
		
		\bibitem{FM} E.~Fatighenti and G.~Mongardi, Fano varieties of K3 type and IHS
		manifolds, arXiv:1904.05679.
		
		\bibitem{MR3077892} L.~Fu, Decomposition of small diagonals and {C}how rings
		of hypersurfaces and {C}alabi-{Y}au complete intersections, Adv. Math., 244:894--924, (2013).
		
		\bibitem{FLV} L.~Fu, R.~Laterveer, and Ch.~Vial, with a joint appendix with
		M.~Shen, The generalized {F}ranchetta conjecture for some hyper-{K}{\"a}hler
		varieties, Journal de Math\'ematiques Pures et Appliqu\'ees  130 (2019), 1--35.
		
		\bibitem{FLV2} L.~Fu,  R.~Laterveer, and Ch.~Vial, The
		generalized {F}ranchetta conjecture for some hyper-{K}{\"a}hler
		varieties, II, Journal de l'\'Ecole polytechnique, Tome 8 (2021), 1065--1097. 
		
		\bibitem{FTV} L.~Fu, Z.~Tian and Ch.~Vial, Motivic hyper-{K}\"{a}hler
		resolution conjecture, {I}: generalized
		{K}ummer varieties, Geom.  Topol., 23(1):427--492, (2019).
	
		\bibitem{FuVialJAG} L.~Fu and Ch.~Vial, Distinguished cycles on varieties with
		motive of abelian type and the  section property, Journal of Algebraic
		Geometry 29 (2020), 53--107.
		
		\bibitem{FV} L.~Fu and Ch.~Vial, Cubic fourfolds,
		Kuznetsov components and Chow motives, arXiv:2009.13173.
		
		\bibitem{F} W.~Fulton, Intersection theory, Springer-Verlag Ergebnisse der
		Mathematik, Berlin Heidelberg New York Tokyo 1984.
		
		\bibitem{GS} S.~Galkin and E.~Shinder, The Fano variety of lines and
		rationality problem for a cubic hypersurface, arXiv:1405.5154.
	
		\bibitem{GrossSchoen} B.~Gross and C.~Schoen, The modified diagonal cycle on
		the triple product of a pointed curve, Ann. Inst. Fourier (Grenoble) 45
		(1995),	no. 3, 649--679.
		
		\bibitem{IM} A.~Iliev and L.~Manivel, Fano manifolds of degree $10$ and EPW
		sextics, Ann. Sci. Ecole Norm. Sup. 44 (2011), 393--426.
		
		\bibitem{IM2} A.~Iliev and L.~Manivel, Hyperk\"ahler manifolds from the
		Tits--Freudenthal square, European Journal of Mathematics 5 (2019),
		1139--1155.
		
		\bibitem{IKKR} A.~Iliev, G.~Kapustka, M.~Kapustka, and K.~Ranestad, EPW cubes,
		J. Reine Angew. Math. 748 (2019), 241--268. 
		
		\bibitem{Izadi} E.~Izadi, A Prym construction for the cohomology of a cubic hypersurface, Proceedings of the London Mathematical Society, Vol. 79, Issue 3, (1999), 535--568.
		
		\bibitem{J} U.~Jannsen, Motivic sheaves and filtrations on Chow groups, in:
		Motives (U.~Jannsen et alii, eds.), Proceedings of Symposia in Pure
		Mathematics Vol. 55 (1994), Part 1.
		
		\bibitem{Kimura} S.-I.~Kimura, Chow groups are finite dimensional, in some
		sense, Math. Ann., 331(1):173--201, (2005).
		
		\bibitem{Ku} O.~K\"uchle, On Fano $4$-folds of index $1$ and homogeneous
		vector bundles over Grassmannians, Math. Zeitschrift 218 (1995), 563--575.
		
		\bibitem{KM} A.~Kuznetsov and D.~Markushevich, Symplectic structures on moduli
		spaces of sheaves via the Atiyah class, J. Geom. Phys. 59 (2009), 843--860.
		
		\bibitem{Kuz1} A.~Kuznetsov, On K\"uchle varieties with Picard number greater
		than $1$, Izvestiya RAN: Ser. Mat. 79:4 (2015), 57--70 (in Russian);
		translation in Izvestiya: Mathematics 79:4 (2015), 698--709.
		
		\bibitem{Kuz2} A.~Kuznetsov, K\"uchle fivefolds of type $c5$, Math. Z. (2016)
		284, 1245--1278.
		
		\bibitem{MR1265530} K.~K{\"u}nnemann, On the {C}how motive of an abelian
		scheme, in: Motives ({S}eattle,
		{WA}, 1991), volume~55 of Proc. Sympos. Pure Math., pages 189--205. Amer.
		Math. Soc., Providence, RI, 1994.
		
		\bibitem{moi} R.~Laterveer, Some results on a conjecture of
		Voisin for surfaces of geometric genus one, Boll. Unione Mat. Italiana 9 no. 4
		(2016), 435--452.
		
		\bibitem{LatQuebec} R.~Laterveer,
		A remark on the motive of the Fano variety of lines of a cubic,
		Ann. Math. Qu\'e. 41 (2017), 141--154.
		
		\bibitem{tod} R.~Laterveer, Algebraic cycles and Todorov surfaces, Kyoto
		Journal of Math. 58 no. 3 (2018), 493--527.
		
		\bibitem{d3} R.~Laterveer, A remark on the Chow ring of K\"uchle fourfolds of
		type $d3$, Bulletin Australian Math. Soc. 100 no. 3 (2019), 410--418.
		
		\bibitem{Ver} R.~Laterveer, Algebraic cycles and Verra fourfolds, Tohoku Math.
		J. 72 no. 3 (2020), 451--485.	
		
		\bibitem{S2} R.~Laterveer, On the Chow ring of Fano varieties of type S2, Abh. Math. Semin. Univ. Hambg. 90 (2020), 17--28.
		
		
		\bibitem{B1B2} R.~Laterveer, On the Chow ring of certain Fano fourfolds,
		Annales Univ. Paed. Cracoviensis Studia Math. 19 (2020), 39--52.
			
		\bibitem{LV} R.~Laterveer and Ch.~Vial, On the Chow ring of Cynk--Hulek
		Calabi--Yau varieties and Schreieder varieties, Canadian Journal of Math. 72 no. 2 (2020), 505--536.		
		
		\bibitem{LLSS} Ch.~Lehn, M.~Lehn, Ch.~Sorger, and D.~van Straten,
		Twisted cubics on cubic fourfolds, J. Reine Angew. Math. {731}, 87--128,
		2017.
		
		\bibitem{LSV} R.~Laza, G.~Sacc\`a, and C.~Voisin, A hyper-K\"ahler
		compactification of the intermediate Jacobian fibration associated with a cubic 4-fold, Acta
		Math. 218 (2017), 55--135.
		
		
		\bibitem{MR2414143} G.~Marini, Tautological cycles on {J}acobian varieties,
		Collect. Math., 59(2):167--190, (2008).
		
		\bibitem{Mor3} D.~Morrison, On the moduli of Todorov surfaces, in: Algebraic
		Geometry and Commutative Algebra in Honor of Masayoshi Nagata (H. Hijikata et
		al., eds.), vol. 1, Kinokuniya, Tokyo 1988.
		
		\bibitem{Mor4} D.~Morrison, Isogenies between algebraic surfaces with
		geometric genus one,
		Tokyo Journal of Mathematics 10.1 (1987), 179--187.
		
		\bibitem{Mur} J.~Murre, On a conjectural filtration on the Chow groups of an
		algebraic variety, parts I and II, Indag. Math. 4 (1993), 177--201.
		
		\bibitem{NOY} A.~Negu\c{t}, G.~Oberdieck and Q.~Yin, Motivic decompositions for the Hilbert scheme of points of a K3 surface, preprint, arXiv:1912.09320.

		\bibitem{OG} K.~O'Grady, Moduli of sheaves and the Chow group of K3
		surfaces, Journal de Math. Pures et Appliqu\'ees 100 no. 5 (2013), 701--718.
		
		\bibitem{MR3522252}
		K.~O'Grady, Decomposable cycles and {N}oether-{L}efschetz loci, Doc. Math.,
		21:661--687, (2016).
		
		\bibitem{OG06} K.~O'Grady, Irreducible symplectic 4-folds and
		Eisenbud--Popescu--Walter sextics, Duke Math. J. 134 (2006), no. 1, 99--137.
		
		\bibitem{Otsubo}
		N.~Otsubo, On the {A}bel--{J}acobi maps of {F}ermat {J}acobians,
		Math. Z., 270(1-2):423--444, 2012.
		
		\bibitem{Otw} A.~Otwinowska, 
		Remarques sur les groupes de Chow des hypersurfaces de petit degr\'e,
		C. R. Acad. Sci. Paris S\'er. I Math. 329 (1999), no. 1, 51--56.
		
		\bibitem{PSY} N.~Pavic, J.~Shen and Q.~Yin, On O'Grady's generalized
		Franchetta conjecture, Int. Math. Res. Notices (2016), 1--13.
		
		\bibitem{Rie14} U.~Rie\ss, On the {C}how ring of birational irreducible
		symplectic varieties,
		Manuscripta Math.,  145(3-4):473--501, 2014.
		
		\bibitem{Rito} C.~Rito, A note on Todorov surfaces, Osaka J. Math. 46 no. 3
		(2009), 685--693.
		
		\bibitem{schnell} Ch.~Schnell, Two lectures about Mumford-Tate groups,
		Rend. Semin. Mat. Univ. Politec. Torino 69 (2011), no. 2, 199--216.
		
		\bibitem{Schreieder} S.~Schreieder, 
		On the construction problem for Hodge numbers,
		Geom. Topol. 19 (2015), no. 1, 295--342.
		
		\bibitem{SV} M.~Shen and Ch.~Vial, The Fourier transform for certain
		hyperK\"ahler fourfolds, Memoirs of the AMS 240 (2016), no.1139.
		
		\bibitem{SV2} M.~Shen and Ch.~Vial, On the motive of the Hilbert cube
		$X^{[3]}$, Forum Math. Sigma 4 (2016).
		
		\bibitem{Shimada1} I.~Shimada, 
		On the cylinder isomorphism associated to the family of lines on a hypersurface, J. Fac. Sci. Univ. Tokyo Sect. IA Math. 37 (1990), 703--719.
		
				
		\bibitem{Shimada} I.~Shimada, On the cylinder homomorphisms of
		Fano complete intersections, J. Math. Soc. Japan, Vol. 42, No. 4 (1990), 619--638.		
		
		\bibitem{Tavakol} M.~Tavakol, Tautological classes on the moduli space of
		hyperelliptic curves
		with rational tails, J. Pure Appl. Algebra 222 (2018), no. 8, 2040--2062.
		
		\bibitem{Tod} A.~Todorov, Surfaces of general type with $p_g= 1$ and $(K,K) =
		1$, Ann. Sci. de l'Ecole Normale Sup. 13 (1980), 1--21.
		
		\bibitem{Tod2} A.~Todorov, A construction of surfaces with $p_g=1$, $q=0$ and
		$2\le (K^2)\le 8$: counterexamples of the global Torelli theorem, Invent.
		Math. 63 (1981), 287--304.
		
		\bibitem{Vial1} Ch.~Vial, 
		Projectors on the intermediate algebraic Jacobians. 
		New York J. Math. 19 (2013), 793--822.
		
		\bibitem{V4} Ch.~Vial, Niveau and coniveau filtrations on cohomology groups
		and Chow groups, Proceedings of the LMS 106(2) (2013), 410--444.
		
		\bibitem{Vial} Ch.~Vial, Chow--K\"unneth decomposition for 3- and 4-folds
		fibred by varieties with trivial Chow group of zero-cycles, J. Algebraic Geom.
		24 (2015), 51--80.
		
		\bibitem{V6} Ch.~Vial, On the motive of some hyperk\"ahler varieties, J. fur
		Reine u. Angew. Math. 725 (2017), 235--247.
		
		\bibitem{V17} C.~Voisin, On the Chow ring of certain algebraic hyperk\"ahler
		manifolds, Pure Appl. Math. Q. 4 no. 3 part 2 (2008), 
		613--649.
		
		\bibitem{Vo} C.~Voisin, Chow Rings, Decomposition of the Diagonal, and the
		Topology of Families, Princeton University Press, Princeton and Oxford, 2014.
		
		\bibitem{V15} C.~Voisin, On the universal $CH_0$ group of cubic hypersurfaces,
		Journal Eur. Math. Soc. 19 no. 6 (2017), 1619--1653.
		
		\bibitem{Yin} Q.~Yin, Finite-dimensionality and cycles on powers of K3
		surfaces, Comment. Math. Helv. 90 (2015), 503--511.
		
		\bibitem{Z} N.~Zangani, Algebraic cycles on Todorov surfaces of type (2,12),
		arXiv:1905.10123.
	\end{thebibliography}

\end{document}